\date{\today}
\newtheorem{theorem}{Theorem}[section]
\newtheorem{lemma}[theorem]{Lemma}
\theoremstyle{definition}
\theoremstyle{remark}
\newtheorem{remark}{Remark}
\numberwithin{equation}{section}
\newcommand{\hyper}[5]{\,{}_{#1}F_{#2}\left(\!\!%
\begin{array}{cc}{\displaystyle{#3}}\\[-0.1ex]
{\displaystyle{#4}} \end{array}\Big| \,{\displaystyle{#5}}
\right)}
\begin{document}

\title[Fourier transform of some special functions]{Fourier transforms of some special functions in terms of orthogonal polynomials on the simplex and continuous Hahn polynomials}

\author[G\"{u}ldo\u{g}an-Lekesiz]{Esra G\"{u}ldo\u{g}an-Lekesiz}
\address[G\"{u}ldo\u{g}an-Lekesiz]{Atilim University, Department of Mathematics, Incek 06836, Ankara, Turkey}
\email[G\"{u}ldo\u{g}an-Lekesiz]{esra.guldogan@atilim.edu.tr}

\author[Akta\c{s}]{Rabia  Akta\c{s}}
\address[Akta\c{s}]{Ankara University, Faculty of Science, Department of Mathematics, 06100, Tando\u{g}an, Ankara, Turkey}
\email[Akta\c{s}]{raktas@science.ankara.edu.tr}

\author[Area]{I. Area}
\address[Area]{ Universidade de Vigo, Departamento de Matem\'atica Aplicada II,
 E.E. Aeron\'autica e do Espazo,
 Campus As Lagoas-Ourense,
 32004 Ourense, Spain.}
\email[Area]{area@uvigo.gal}

\subjclass[2020]{Primary 33C50 \ Secondary 33C45} 

\keywords{Jacobi polynomials; Multivariate orthogonal polynomials; Hahn polynomials; Fourier transform; Parseval's identity; hypergeometric function; recurrence relation}
\begin{abstract}
In this paper Fourier transform of multivariate orthogonal polynomials on the simplex is presented. A new family of multivariate orthogonal functions is obtained by using the Parseval's identity and several recurrence relations are derived.
\end{abstract}

\maketitle

\section{Introduction}

The integral transforms are important mathematical tools used in many fields such as vibration analysis, sound engineering and image processing. They have wide applications in physics, engineering and in other scientific and mathematical disciplines. The study of orthogonal polynomials and their transformations have been the subject of many papers during the last several years.
By the Fourier transform or other integral transforms, some univariate orthogonal polynomials systems which are mapped onto each other exist \cite{MR0065685}. As an example, it is known that Hermite functions which are multiplied Hermite polynomials $H_{n}\left(  x\right)$ by $\exp \left(  -x^{2}/2\right)$ are eigenfunctions of Fourier transform \cite{MR1307541,MR2336575, MR973421, MR838982}; similarly, Koelink \cite{MR1307541} has showed that Jacobi polynomials are mapped onto the continuous Hahn polynomials by using the Fourier transform. Also, in \cite{MR838982}, it is seen that classical Jacobi polynomials can be mapped onto Wilson polynomials by the Fourier-Jacobi transform. By inspired of Koelink's paper, in \cite{MR2888193} Masjed-Jamei et al. have introduced two new families of orthogonal functions by using Fourier transforms of the generalized ultraspherical polynomials and the generalized Hermite polynomials. In \cite{MR3170751}, the Fourier transform of Routh-Romanovski polynomials $J_{n}^{\left(u,v\right)  }\left(x;a,b,c,d\right)$ has been obtained. Furthermore, in \cite{MR2336575, MR3169170}, four new examples of finite orthogonal functions have been derived by use of the Fourier transforms of the finite classical orthogonal polynomials $M_{n}^{\left(p,q\right)  }\left(  x\right)$ and $N_{n}^{\left(  p\right)  }\left(x\right)$, and two symmetric sequences of finite orthogonal polynomials and the Parseval's identity (see \cite{MR2246501, MR1915513, MR2270049} for details of the finite orthogonal polynomials and symmetric sequences of finite orthogonal polynomials). Recently, G\"{u}ldo\u{g}an et al. \cite{GAA} have obtained new families of orthogonal functions by means of Fourier transforms of bivariate orthogonal polynomials introduced by Koornwinder \cite{MR0402146} and Fern\'andez et al. \cite{MR2923514}.\\
In the present paper, we study the Fourier transformations of the classical polynomials orthogonal by means of the extension of Jacobi weight function to several variables on the simplex $T^{r}$. When $r=1$, the simplex $T^{r}$ becomes the interval $[0,1]$ and the corresponding orthogonal polynomials are Jacobi polynomials $P_{n}^{\left(  \alpha,\beta \right)  }\left(  2x-1\right)$ on the interval $[0,1]$. In this case, Koelink \cite{MR1307541} has derived the Fourier transform of certain Jacobi polynomials $P_{n}^{\left(  \alpha,\beta \right)  }\left(  x\right)$ on the interval $[-1,1]$ in terms of continuous Hahn polynomials and has discussed some applications. The Fourier transforms of bivariate orthogonal polynomials have been studied by G\"{u}ldo\u{g}an et al. \cite{GAA}. By the motivation of these papers, we obtain the Fourier transforms of the orthogonal polynomials on $T^{r}$ and we write them in terms of continuous Hahn polynomials. We first state the results for $r=1$ and $r=2$ to illustrate the results and illuminate how the results on $T^{r}$ are obtained, then we give the results on $T^{r}$ by induction. By using the obtained Fourier transforms and Parseval's identity, we derive a new family of orthogonal functions and obtain several recurrence relations for these functions.

The main aim of this work is to derive the Fourier transforms of functions defined in terms of orthogonal polynomials on the simplex and to obtain a new family of multivariate orthogonal functions by the similar method applied in \cite{GAA} for bivariate Koornwinder polynomials. While doing these, firstly we define specific special functions so that they are determined with the motivation to use the orthogonality relation of orthogonal polynomials on the simplex in Parseval's identity created with the help of Fourier transform. In order to give the results on $T^{r}$, we shall proceed by induction and we first discuss the results for $r=1$ and $r=2$.  The manuscript is organized as follows: in section \ref{sec:bdn} we introduce some basic definitions and notations. Section \ref{sec:ft} includes the Fourier transforms of the multivariate orthogonal polynomials on the $r$-simplex. In section \ref{sec:ops},  by using the Parseval's identity, a new family of multivariate orthogonal functions is defined by induction. Finally, in section \ref{sec:fs} we derive several recurrence relations for the new family of multivariate orthogonal functions.

\section{Basic definitions and notations}\label{sec:bdn}

The univariate Jacobi polynomials $P_{n}^{\left(  \alpha,\beta \right)  }\left(  x\right) $ are defined by the explicit representation
\begin{equation}\label{A:jac}
P_{n}^{\left(  \alpha,\beta \right)  }\left(  x\right)  =2^{-n}{\displaystyle \sum \limits_{k=0}^{n}} \binom{n+\alpha}{k} \binom{n+\beta}{n-k} \left(  x+1\right)  ^{k}\left(  x-1\right)  ^{n-k}.
\end{equation}
They are orthogonal on the interval $\left[  -1,1\right]$ with respect to the weight function $w\left(  x\right)  =\left(  1-x\right)^{\alpha}\left(  1+x\right)  ^{\beta}$ (cf. \cite{MR0393590,MR0372517}),
\begin{multline*}
{\displaystyle \int \limits_{-1}^{1}}\left(  1-x\right)  ^{\alpha}\left(  1+x\right)  ^{\beta}P_{n}^{\left(\alpha,\beta \right)  }\left(  x\right)  P_{m}^{\left(  \alpha,\beta \right)
}\left(  x\right)  dx \\
=\frac{2^{\alpha+\beta+1}\Gamma \left(  \alpha+n+1\right)\Gamma \left(  \beta+n+1\right)  }{n!\left(  \alpha+\beta+2n+1\right)\Gamma \left(  \alpha+\beta+n+1\right)  }\delta_{m,n}, \qquad \alpha,\beta>-1,
\end{multline*}
where $m,n\in \mathbb{N}_{0}:=\mathbb{N\cup}\left \{  0\right \}$,  $\delta_{m,n}$ is the Kronecker delta, and $\Gamma(\lambda)$ is the Gamma function (cf. \cite{MR757537}) defined by
\begin{equation}
\Gamma \left(  \lambda \right)  ={\displaystyle \int \limits_{0}^{\infty}}x^{\lambda-1}e^{-x}dx, \quad \Re\left(  \lambda \right)  >0.
\end{equation}

The $(p,q)$ generalized hypergeometric function is defined by (cf. \cite{MR757537})
\begin{equation}\label{hyper}
\hyper{p}{q}{a_{1},a_{2},\dots,a_{p}}{b_{1},b_{2},\dots,b_{q}}{x}={\displaystyle \sum \limits_{k=0}^{\infty}} \frac{\left(  a_{1}\right)  _{k}\left(  a_{2}\right)  _{k} \cdots \left(
a_{p}\right)  _{k}}{\left(  b_{1}\right)  _{k}\left(  b_{2}\right)_{k} \cdots \left(  b_{q}\right)  _{k}}\frac{x^{k}}{k!},
\end{equation}
where $\left(  a\right)_{k}={\displaystyle \prod \limits_{r=0}^{k-1}} \left(  a+r\right)$, $\left(  a\right)  _{0}=1$, $k\in \mathbb{N}_{0}$, is the Pochhammer symbol, and the beta function is given by (cf. \cite{MR757537})
\begin{equation}
B\left(  a,b\right)  ={\displaystyle \int \limits_{0}^{1}}x^{a-1}\left(  1-x\right)  ^{b-1}dx=\frac{\Gamma \left(  a\right)\Gamma \left(  b\right)  }{\Gamma \left(  a+b\right)  }, \qquad \Re\left(  a\right)  ,\Re\left(  b\right)  >0.
\end{equation}

The continuous Hahn polynomials can be introduced in terms of hypergeometric series as \cite{MR812420}
\begin{equation}\label{hahn}
p_{n}\left( x;a,b,c,d\right) =i^{n}\frac{\left( a+c\right) _{n}\left(a+d\right) _{n}}{n!} \hyper{3}{2}{-n,\ n+a+b+c+d-1,\ a+ix}{a+c,\ a+d}{1}.
\end{equation}
These polynomials can be presented as a limiting case of the Wilson polynomials \cite{MR812420}. Koelink \cite{MR1307541} gave a nice survey of the history this family of orthogonal polynomials. Recently \cite{Alhaidari} some applications of continuous Hahn polynomials to quantum mechanics have been presented.

Let $\left \vert \boldsymbol{x}\right \vert :=x_{1}+\cdots +x_{r}$ for $\boldsymbol{x}=\left(  x_{1},\dots,x_{r}\right)\in \mathbb{R}^{r}$. Let $V_{n}^{r}$ be the linear space of orthogonal polynomials respect to the weight function $W_{\mathbf{\alpha}}\left(  \boldsymbol{x}\right) =x_{1}^{\alpha_{1}} \cdots x_{r}^{\alpha_{r}}\left(  1-\left \vert \boldsymbol{x}\right \vert \right)  ^{\alpha_{r+1}}$ , $\alpha=(\alpha_{1},\dots ,\alpha_{r+1})$, $\alpha_{i}>-1$, on the simplex $T^{r}=\left \{  \boldsymbol{x}\in
\mathbb{R} ^{r}:x_{1}\geq0,\dots,x_{r}\geq0,1-\left \vert \boldsymbol{x}\right \vert \geq0\right \}  $. The elements of the linear space $V_{n}^{r}$ verify the following partial differential equation (cf. \cite{MR3289583})
\begin{multline*}
 \sum \limits_{i=1}^{r}x_{i}\left(  1-x_{i}\right)  \frac{\partial^{2} P}{\partial x_{i}^{2}}-2\sum \limits_{1\leq i<j\leq r}x_{i}x_{j}\frac{\partial^{2}P}{\partial x_{i}\partial x_{j}}
 +\sum \limits_{i=1}^{r}\left(  \left(  \alpha_{i}+1\right)  -\left(\left \vert \mathbf{\alpha}\right \vert +r+1\right)  x_{i}\right)\frac{\partial P}{\partial x_{i}}\\
 =-n\left \{  n+\left \vert \mathbf{\alpha}\right \vert +r+1\right \}  P,
\end{multline*}
which has $\dbinom{n+r-1}{n}$ linear independent polynomial solutions of total degree $n$. In here, $\left \vert \mathbf{\alpha}\right \vert =\alpha_{1}+\cdots+\alpha_{r+1}$. The space $V_{n}^{r}$ has several different bases. One of these bases can be introduced as follows. Let $\mathbf{x}_{j}$ and $\mathbf{n}^{j}$ be defined as
\[
\boldsymbol{x}_{0}=0,\quad  \boldsymbol{x}_{j}=\left(  x_{1},\dots,x_{j}\right), \quad \left \vert \boldsymbol{x}_{j}\right \vert =x_{1}+\cdots+x_{j}, \quad 1\leq j\leq r,
\]
\[
\mathbf{n}^{j}=\left(  n_{j},\dots,n_{r}\right), \quad \left \vert\mathbf{n}^{j}\right \vert =n_{j}+\cdots+n_{r}, \quad 1\leq j\leq r
\]
and $\mathbf{n}^{r+1} :=0$.

Also, let $\mathbf{n}=\left(  n_{1},\dots,n_{r}\right) $, $\left \vert\mathbf{n}\right \vert =n_{1}+\cdots+n_{r}=n$, $\mathbf{\alpha}=\left(  \alpha_{1},\dots,\alpha_{r+1}\right) $, and $\mathbf{\alpha}^{j}=\left(  \alpha_{j},\dots,\alpha_{r+1}\right)$, $\left \vert\mathbf{\alpha}^{j}\right \vert =\alpha_{j}+\cdots+\alpha_{r+1}$,  $1\leq j\leq r+1$. In this case, an orthogonal base of the space $V_{n}^{r}$ is of form (cf. \cite{MR3289583})
\begin{equation}\label{A:100}
P_{\mathbf{n}}^{\left(  \mathbf{\alpha}\right)  }\left(  \boldsymbol{x}\right)  =\prod \limits_{j=1}^{r}\left(1-\left \vert \boldsymbol{x}_{j-1}\right \vert \right)
^{n_{j}}P_{n_{j}}^{\left(  a_{j},b_{j}\right)  }\left(  \frac{2x_{j}}{1-\left \vert \boldsymbol{x}_{j-1}\right \vert }-1\right),
\end{equation}
where $P_{n_{j}}^{\left(  a_{j},b_{j}\right)  }\left(  x_{j}\right)  $ indicates the classical univariate Jacobi polynomial defined in \eqref{A:jac}, $a_{j}=2\left \vert \mathbf{n}^{j+1}\right \vert +\left \vert \mathbf{\alpha}^{j+1}\right \vert +r-j$ and $b_{j}=\alpha_{j}$. It follows
\begin{equation}\label{A:3}
{\displaystyle \int \limits_{T^{r}}}W_{\alpha}\left(  \boldsymbol{x}\right)  P_{\mathbf{n}}^{\left(\mathbf{\alpha}\right)  }\left(  \boldsymbol{x}\right)  P_{\mathbf{m}}^{\left(  \mathbf{\alpha}\right)  }\left(  \boldsymbol{x}\right)\mathbf{dx=}h_{\mathbf{n}}^{\left(  \mathbf{\alpha}\right)  }\delta_{\mathbf{n,m}},
\end{equation}
where $\mathbf{dx=}dx_{1}\cdots dx_{r}$, and
\begin{multline}\label{B}
h_{\mathbf{n}}^{\left(  \mathbf{\alpha}\right)  }=
\prod \limits_{j=1}^{r}\frac{\Gamma \left(  2\left \vert \mathbf{n}^{j+1}\right \vert +\left \vert \mathbf{\alpha}^{j+1}\right \vert +r-j+1+n_{j}\right)  }{n_{j}!\left(  2\left \vert \mathbf{n}^{j+1}\right \vert +\left \vert \mathbf{\alpha}^{j+1}\right \vert +r-j+1+\alpha_{j}+2n_{j}\right)  } \\
\times \frac{ \Gamma \left(1+\alpha_{j}+n_{j}\right) }{\Gamma \left(  2\left \vert \mathbf{n}^{j+1}\right \vert+\left \vert \mathbf{\alpha}^{j+1}\right \vert +r-j+1+\alpha_{j}+n_{j}\right)} .
\end{multline}

\section{Fourier Transforms of Specific Functions}\label{sec:ft}

The Fourier transform for a function $g(x)$ is defined as (cf. \cite{MR0942661})
\begin{equation}\label{16}
\mathcal{F}
\left(  g\left(  x\right)  \right)  =\int \limits_{-\infty}^{\infty}e^{-i\xi x}g\left(  x\right)  dx
\end{equation}
and the Fourier tranform for a function $g(x_{1},\dots,x_{r})$ in $r$ variables is defined as (cf. \cite{MR1867914})
\begin{equation}\label{17}
\mathcal{F}
\left(  g\left(  x_{1},\dots,x_{r}\right)  \right)  =\int \limits_{-\infty}^{\infty}\dots\int \limits_{-\infty}^{\infty}e^{-i\left(  \xi_{1}x_{1}+\cdots+\xi_{r}x_{r}\right)  }g\left(  x_{1},\dots,x_{r}\right)  dx_{1}
\dots dx_{r}.
\end{equation}

\subsection{The Fourier transform of Jacobi polynomials on 1-simplex}
When $r=1$, the simplex $T^{r}$ becomes the interval $[0,1]$ and the corresponding orthogonal polynomials in \eqref{A:100} are the Jacobi polynomials $P_{n}^{\left( \alpha \right)  }\left( x\right):= P_{n}^{\left( \alpha_{2}, \alpha_{1} \right)  }\left(  2x-1\right)$.\\
Define
\begin{equation}\label{55}
g_{1}\left(  x;n,a_{1},a_{2},\alpha_{1},\alpha_{2}\right)
=\left(  1+\tanh x\right)  ^{a_{1}}\left(  1-\tanh x\right)  ^{a_{2}} P_{n}^{\left(\alpha_{1},\alpha_{2}\right)  }\left( \Upsilon_{1}\right)  ,
\end{equation}
where $a_{1},a_{2},\alpha_{1},$ and $\alpha_{2}$ are real parameters and
\begin{equation*}
\Upsilon_{1}(x)=\Upsilon_{1}= \frac{1+\tanh x}{2}.
\end{equation*}
 The Fourier transform of this function is
\begin{multline} \label{99}
\mathcal{F}\left( g_{1}\left( x;n,a_{1},a_{2},\alpha _{1},\alpha _{2}\right)
\right)  
=\int \limits_{-\infty }^{\infty }e^{-i\xi x}\left( 1+\tanh x\right)
^{a_{1}}\left( 1-\tanh x\right) ^{a_{2}}P_{n}^{\left( \alpha _{2},\alpha
_{1}\right) }\left( \tanh x\right) dx  \\
= \frac{2^{a_{1}+a_{2}-1}\left( \alpha _{2}+1\right) _{n}}{n!}\Lambda_{1}^{1}(\mathbf{a},\mathbf{\alpha},n;\xi),
\end{multline}
where
\begin{equation*}
\Lambda_{1}^{1}(\mathbf{a},\mathbf{\alpha},n;\xi)= \hyper{3}{2}{-n ,\ a_{2}+\frac{i\xi}{2},\
n+\alpha_{1}+\alpha_{2}+1}{\alpha_{2}+1,\ a_{1}+a_{2}}{1}  B\left( a_{2}+%
\frac{i\xi }{2},\ a_{1}-\frac{i\xi }{2}\right), \\
\end{equation*}
which was proved by Koelink \cite{MR1307541} and it was also rewritten in terms of the continuous Hahn polynomials \eqref{hahn}
as
\begin{multline*}
\mathcal{F}\left( g_{1}\left( x;n,a_{1},a_{2},\alpha _{1},\alpha _{2}\right)
\right)=\frac{2^{a_{1}+a_{2}-1}}{i^{n}\left( a_{1}+a_{2}\right) _{n}}B\left( a_{2}+%
\frac{i\xi }{2},\ a_{1}-\frac{i\xi }{2}\right)  \\
\times p_{n}\left( \frac{\xi }{2};\ a_{2},\  \alpha _{1}-a_{1}+1,\  \alpha
_{2}-a_{2}+1,\ a_{1}\right) ,
\end{multline*}
where $\mathbf{a}=\left( a_{1},a_{2}\right)$, $\mathbf{\alpha}=\left( \alpha_{1},\alpha_{2}\right)$ and $\ _{3}F_{2}$ is a special case of the hypergeometric function \eqref{hyper}.

\subsection{The Fourier transform of two-dimensional polynomials on 2-simplex}
Let consider the following specific function in terms of the orthogonal polynomials on 2-simplex
\begin{multline}\label{5}
g_{2}\left(  x_{1},x_{2};n_{1},n_{2},a_{1},a_{2},a_{3},\alpha_{1},\alpha_{2},\alpha_{3}\right)
=\left(  1+\tanh x_{1}\right)  ^{a_{1}}\left(  1-\tanh x_{1}\right)  ^{a_{2}+a_{3}} \\
\times \left(  1+\tanh x_{2}\right)  ^{a_{2}}   \left(  1-\tanh x_{2}\right)  ^{a_{3}}
 P_{n_{1},n_{2}}^{\left(\alpha_{1},\alpha_{2},\alpha_{3}\right)  }\left( \Upsilon_{1},\Upsilon_{2} \right)  ,
\end{multline}
which is selected in this form to use the orthogonality relation for orthogonal polynomials on 2-simplex in Parseval's identity by the motivation of the Fourier transform of Jacobi polynomials on 1-simplex. Here $a_{1},a_{2},a_{3},\alpha_{1},\alpha_{2},\alpha_{3}$ are real parameters, and
\begin{equation*}
\Upsilon_{1}(x_{1})=\Upsilon_{1}= \frac{1+\tanh x_{1}}{2},\qquad \Upsilon_{2}(x_{1},x_{2})=\Upsilon_{2}=\frac{\left(  1-\tanh x_{1}\right)  \left(  1+\tanh x_{2}\right)  }{4}.
\end{equation*}
It is easily seen that the function $g_{2}$ can also be expressed in terms of $g_{1}$ as follows:
\begin{multline}\label{555}
g_{2}\left(  x_{1},x_{2};n_{1},n_{2},a_{1},a_{2},a_{3},\alpha_{1},\alpha_{2},\alpha_{3}\right)  \\
=2^{-n_{2}}\left(  1+\tanh x_{2}\right)  ^{a_{2}}\left(  1-\tanh x_{2}\right)  ^{a_{3}}
 P_{n_{2}}^{\left(\alpha_{3},\alpha_{2}\right)  }\left( \tanh x_{2} \right)\\
 \times g_{1}\left(  x_{1};n_{1},a_{1},a_{2}+a_{3}+n_{2},\alpha_{1},\alpha_{2}+\alpha_{3}+2n_{2}+1\right).
\end{multline}

 The corresponding Fourier transform is calculated, under the substitution $%
\tanh x_{2}=2v-1$, as follows from
\begin{multline*}
\mathcal{F}\left( g_{2}\left(
x_{1},x_{2};n_{1},n_{2},a_{1},a_{2},a_{3},\alpha _{1},\alpha _{2},\alpha
_{3}\right) \right)  \\
=\int \limits_{-\infty }^{\infty }\int \limits_{-\infty }^{\infty }e^{-i\left(
\xi _{1}x_{1}+\xi _{2}x_{2}\right) }g_{2}\left(
x_{1},x_{2};n_{1},n_{2},a_{1},a_{2},a_{3},\alpha _{1},\alpha _{2},\alpha
_{3}\right) dx_{1}dx_{2} \\
=2^{-n_{2}}\int \limits_{-\infty }^{\infty }e^{-i\xi _{1}x_{1}}g_{1}\left(
x_{1};n_{1},a_{1},a_{2}+a_{3}+n_{2},\alpha _{1},\alpha _{2}+\alpha
_{3}+2n_{2}+1\right) dx_{1} \\
\times \int \limits_{-\infty }^{\infty }e^{-i\xi _{2}x_{2}}\left( 1+\tanh
x_{2}\right) ^{a_{2}}\left( 1-\tanh x_{2}\right) ^{a_{3}}P_{n_{2}}^{\left(
\alpha _{3},\alpha _{2}\right) }\left( \tanh x_{2}\right) dx_{2}
\end{multline*}
\begin{multline*}
=\mathcal{F}\left( g_{1}\left( x_{1};n_{1},a_{1},a_{2}+a_{3}+n_{2},\alpha
_{1},\alpha _{2}+\alpha _{3}+2n_{2}+1\right) \right)  \\
\times 2^{a_{2}+a_{3}-n_{2}-{1}}\int \limits_{0}^{1}v^{a_{2}-\frac{i\xi _{2}}{%
2}-1}\left( 1-v\right) ^{a_{3}+\frac{i\xi _{2}}{2}-1}P_{n_{2}}^{\left(
\alpha _{3},\alpha _{2}\right) }\left( 2v-1\right) dv \\
=\mathcal{F}\left( g_{1}\left( x_{1};n_{1},a_{1},a_{2}+a_{3}+n_{2},\alpha
_{1},\alpha _{2}+\alpha _{3}+2n_{2}+1\right) \right)  \\
\times 2^{a_{2}+a_{3}-n_{2}-{1}}\int \limits_{0}^{1}v^{a_{2}-\frac{i\xi _{2}}{%
2}-1}\left( 1-v\right) ^{a_{3}+\frac{i\xi _{2}}{2}-1}\frac{\left( \alpha
_{3}+1\right) _{n_{2}}}{n_{2}!} \\
\times \hyper{2}{1}{-n_{2}, n_{2}+\alpha_{2}+\alpha_{3}+1}{ \
\alpha_{3}+1 }{1-v}dv
\end{multline*}
\begin{multline}
=\mathcal{F}\left( g_{1}\left( x_{1};n_{1},a_{1},a_{2}+a_{3}+n_{2},\alpha
_{1},\alpha _{2}+\alpha _{3}+2n_{2}+1\right) \right) \frac{\left( \alpha _{3}+1\right) _{n_{2}}}{n_{2}!} 2^{a_{2}+a_{3}-n_{2}-{1}} \\
\times \sum \limits_{l_{2}=0}^{n_{2}}\frac{\left( -n_{2}\right)
_{l_{2}}\left( n_{2}+\alpha _{2}+\alpha _{3}+1\right) _{l_{2}}}{\left(
\alpha _{3}+1\right) _{l_{2}}l_{2}!}
  \int \limits_{0}^{1}v^{a_{2}-\frac{i\xi _{2}}{2}-1}\left( 1-v\right)
^{a_{3}+\frac{i\xi _{2}}{2}-1+l_{2}}dv
\end{multline}
that is
\begin{multline}
\mathcal{F}\left(  g_{2}\left(  x_{1},x_{2};n_{1},n_{2},a_{1},a_{2},a_{3},\alpha_{1},\alpha_{2},\alpha_{3}\right)  \right) \label{1}\\
=\frac {2^{a_{1}+2\left(  a_{2}+a_{3}\right) - {2}} \left(  2n_{2}+\alpha_{2}+\alpha_{3}+2\right)  _{n_{1}}\left(  \alpha_{3}+1\right) _{n_{2}}}{n_{1}!n_{2}!}\Lambda_{1}^{2}(\mathbf{a},\mathbf{\alpha},\mathbf{n};\xi_{1}) \Lambda_{2}^{2}(\mathbf{a},\mathbf{\alpha},\mathbf{n};\xi_{2}),
\end{multline}
where
\begin{multline*}
\Lambda_{1}^{2}(\mathbf{a},\mathbf{\alpha},\mathbf{n};\xi_{1})= B\left(  a_{1}-\frac{i\xi_{1}}{2},n_{2}+a_{2}+a_{3}+\frac{i\xi_{1}}{2}\right)\\
 \times \hyper{3}{2}{-n_{1},  n_{1}+2n_{2}+\alpha_{1}+\alpha_{2}+\alpha_{3}+2  ,\ n_{2}+a_{2}+a_{3}+\frac{i\xi_{1}}{2}}{2n_{2}+\alpha_{2}+\alpha_{3}+2,\ n_{2}+a_{1}+a_{2}+a_{3}}{1} ,
\end{multline*}
\begin{equation*}
\Lambda_{2}^{2}(\mathbf{a},\mathbf{\alpha},\mathbf{n};\xi_{2})=B\left(  a_{2}-\frac{i\xi_{2}}{2},a_{3}+\frac{i\xi_{2}}{2}\right) 
 \hyper{3}{2}{-n_{2},n_{2}+\alpha_{2}+\alpha_{3}+1,a_{3}+\frac{i\xi_{2}}{2}}{\alpha_{3}+1,\ a_{2}+a_{3}}{1},
\end{equation*}
or, in terms of the continuous Hahn polynomials \eqref{hahn},
\begin{align*}
\Lambda_{1}^{2}(\mathbf{a},\mathbf{\alpha},\mathbf{n};\xi_{1})&= \frac{n_{1}!B\left( a_{1}-\frac{i\xi _{1}}{2},n_{2}+a_{2}+a_{3}+\frac{%
i\xi _{1}}{2}\right) }{i^{n_{1}}\left( n_{2}+a_{1}+a_{2}+a_{3}\right)
_{n_{1}}\left( 2n_{2}+\alpha _{2}+\alpha _{3}+2\right) _{n_{1}}} \\
& \times p_{n_{1}}\left( \frac{\xi _{1}}{2};n_{2}+a_{2}+a_{3},\alpha
_{1}-a_{1}+1,n_{2}+\alpha _{2}+\alpha _{3}-a_{2}-a_{3}+2,a_{1}\right) ,\\
\Lambda_{2}^{2}(\mathbf{a},\mathbf{\alpha},\mathbf{n};\xi_{2})&=\frac{n_{2}!B\left( a_{2}-\frac{i\xi _{2}}{2},a_{3}+\frac{i\xi _{2}}{2}%
\right) }{i^{n_{2}}\left( a_{2}+a_{3}\right) _{n_{2}}\left( \alpha
_{3}+1\right) _{n_{2}}}  p_{n_{2}}\left( \frac{\xi _{2}}{2};a_{3},\alpha _{2}-a_{2}+1,\alpha
_{3}-a_{3}+1,a_{2}\right),
\end{align*}
where $\mathbf{a}=\left( a_{1},a_{2},a_{3}\right)$, $\mathbf{\alpha}=\left( \alpha_{1},\alpha_{2},\alpha_{3}\right)$, $\mathbf{n}=\left( n_{1},n_{2}\right)$.

\subsection{The Fourier transform of $r$-dimensional polynomials on $r$-simplex}

Let us consider
\begin{multline} \label{18}
g_{r}\left(  x_{1},\dots,x_{r};n_{1},\dots,n_{r},a_{1},\dots,a_{r+1},\alpha_{1},\dots,\alpha_{r+1}\right) \\
=\prod \limits_{j=1}^{r}  \left(  1+\tanh x_{j}\right)^{a_{j}}\left(  1-\tanh x_{j}\right)  ^ { \vert \mathbf{a}^{j+1} \vert}
P_{\mathbf{n}}^{\left(  \mathbf{\alpha}\right)}\left(\Upsilon_{1},\Upsilon_{2},  \dots, \Upsilon_{r} \right)  ,
\end{multline}
for $r\geq1$, where
\begin{align*}
&\Upsilon_{1}(x_{1})=\Upsilon_{1}= \frac{  1+\tanh x_{1}}{2}, \\
&\Upsilon_{r}(x_{1},\dots,x_{r})=\Upsilon_{r}= \frac{\left(  1-\tanh x_{1}\right)  \left(  1-\tanh x_{2}\right)  \cdots \left(  1-\tanh x_{r-1}\right)  \left(  1+\tanh x_{r}\right)}{2^{r}},
\end{align*}
for $r\geq2$ and
\[
\mathbf{n}=\left(  n_{1},\dots,n_{r}\right)  ,\  \mathbf{\alpha}=\left(\alpha_{1},\dots,\alpha_{r+1}\right), \quad  \text{ and}\  \left \vert \mathbf{a}^{j}\right \vert =a_{j}+\cdots+a_{r+1}.
\]
{}From the latter expression, we can write $g_{r+1}$ in terms of $g_{r}$
\begin{multline}\label{3}
g_{r+1}\left(  x_{1},\dots,x_{r+1};n_{1},\dots,n_{r+1},a_{1},\dots,a_{r+2},\alpha_{1},\dots,\alpha_{r+2}\right)  \\
 =2^{-rn_{r+1}}\left(  1+\tanh x_{r+1}\right)  ^{a_{r+1}}\left(  1-\tanh x_{r+1}\right)  ^{a_{r+2}}P_{n_{r+1}}^{\left(  \alpha_{r+2},\alpha_{r+1}\right)  }\left(  \tanh x_{r+1}\right) \\
 \times g_{r}\left(  x_{1},\dots,x_{r};n_{1},\dots,n_{r},a_{1},\dots,a_{r},a_{r+1}+a_{r+2}+n_{r+1},\alpha_{1} \right. \\
 \left. ,\dots,\alpha_{r},\alpha_{r+1}+\alpha_{r+2}+2n_{r+1}+1\right),
\end{multline}
where $P_{n}^{\left(  \alpha_{1},\alpha_{2}\right)  }\left(  x\right)  $ stands for classical univariate Jacobi polynomials defined in \eqref{A:jac}.

In the previous sections, we have already proved the Fourier transforms for $r=1$ and $r=2$. By using induction method, we can calculate the Fourier transform of the function $g_{r}$. We first start with the following theorem.

\begin{theorem}
The following result holds true
\begin{multline} \label{4}
\mathcal{F}\left(  g_{r+1}\left(  x_{1},\dots,x_{r+1};n_{1},\dots,n_{r+1},a_{1},\dots,a_{r+2},\alpha_{1},\dots,\alpha_{r+2}\right)  \right) \\
 =2^{a_{r+1}+a_{r+2}-rn_{r+1}-1}\frac{\left(  \alpha_{r+2}+1\right)_{n_{r+1}} }{n_{r+1}!}
 B\left(  a_{r+1}-\frac{i\xi_{r+1}}{2},a_{r+2}+\frac{i\xi_{r+1}}{2}\right)   \\
 \times \hyper{3}{2}{-n_{r+1},\   n_{r+1}+\alpha_{r+1}+\alpha_{r+2}+1,\ a_{r+2}+\frac{i\xi_{r+1}}{2}}{\alpha_{r+2}+1,\   a_{r+1}+a_{r+2} }{1}   \\
 \times \mathcal{F} \left(  g_{r}\left(  x_{1},\dots,x_{r};n_{1},\dots,n_{r},a_{1},\dots,a_{r},a_{r+1}+a_{r+2}+n_{r+1}\right.  \right.   \\
 \left. \left.  ,\alpha_{1},\dots,\alpha_{r},\alpha_{r+1}+\alpha_{r+2}+2n_{r+1}+1\right)  \right)  .
\end{multline}

\end{theorem}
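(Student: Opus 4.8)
The plan is to exploit the factorized form \eqref{3} of $g_{r+1}$, in which the dependence on the last variable $x_{r+1}$ is completely separated from the dependence on $x_{1},\dots,x_{r}$. First I would substitute \eqref{3} into the definition \eqref{17} of the $(r+1)$-dimensional Fourier transform. Since the factor $(1+\tanh x_{r+1})^{a_{r+1}}(1-\tanh x_{r+1})^{a_{r+2}}P_{n_{r+1}}^{(\alpha_{r+2},\alpha_{r+1})}(\tanh x_{r+1})$ depends only on $x_{r+1}$, while the $g_{r}$ factor (carrying its shifted parameters $a_{r+1}\mapsto a_{r+1}+a_{r+2}+n_{r+1}$ and $\alpha_{r+1}\mapsto\alpha_{r+1}+\alpha_{r+2}+2n_{r+1}+1$) depends only on $x_{1},\dots,x_{r}$, and the kernel splits as $e^{-i\xi_{r+1}x_{r+1}}\prod_{j=1}^{r}e^{-i\xi_{j}x_{j}}$, the $(r+1)$-fold integral factors, by Fubini's theorem, into the product of an $r$-fold integral and a single integral over $x_{r+1}$. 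The $r$-fold integral is precisely $\mathcal{F}\bigl(g_{r}(\dots)\bigr)$ with the parameters shifted exactly as displayed on the right-hand side of \eqref{4}. Note that this theorem is the single recursive step; the full closed form is obtained afterward by induction, so no induction is needed here.

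Next I would identify the remaining one-dimensional integral. Comparing with \eqref{55}, the $x_{r+1}$ factor is exactly $g_{1}(x_{r+1};n_{r+1},a_{r+1},a_{r+2},\alpha_{r+1},\alpha_{r+2})$, the swapped Jacobi indices $(\alpha_{r+2},\alpha_{r+1})$ matching the convention already used in \eqref{99}. Hence its Fourier transform is given verbatim by Koelink's result \eqref{99}, namely $\tfrac{2^{a_{r+1}+a_{r+2}-1}(\alpha_{r+2}+1)_{n_{r+1}}}{n_{r+1}!}$ times $\Lambda_{1}^{1}(\mathbf{a},\boldsymbol{\alpha},n_{r+1};\xi_{r+1})$ evaluated at $(a_{r+1},a_{r+2},\alpha_{r+1},\alpha_{r+2})$; here $\Lambda_{1}^{1}$ carries the desired $_{3}F_{2}$ together with the Beta factor $B\bigl(a_{r+2}+\tfrac{i\xi_{r+1}}{2},\,a_{r+1}-\tfrac{i\xi_{r+1}}{2}\bigr)$.

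Finally I would collect the constants. Multiplying the overall prefactor $2^{-rn_{r+1}}$ coming from \eqref{3} by the factor $2^{a_{r+1}+a_{r+2}-1}$ produced by \eqref{99} yields $2^{a_{r+1}+a_{r+2}-rn_{r+1}-1}$, and the symmetry $B(u,v)=B(v,u)$ rewrites the Beta factor as $B\bigl(a_{r+1}-\tfrac{i\xi_{r+1}}{2},\,a_{r+2}+\tfrac{i\xi_{r+1}}{2}\bigr)$; the $_{3}F_{2}$ and the ratio $(\alpha_{r+2}+1)_{n_{r+1}}/n_{r+1}!$ already coincide with the claimed expression, giving \eqref{4}. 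I expect the step demanding the most care to be the bookkeeping rather than the algebra: tracking the two parameter shifts inside $g_{r}$ consistently, confirming that the swapped Jacobi indices align with \eqref{99}, and verifying that the parameter region (for instance $\Re(a_{r+1})>0$, $\Re(a_{r+2})>0$, together with the analogous positivity conditions inherited by the shifted last argument $a_{r+1}+a_{r+2}+n_{r+1}$ of $g_{r}$) guarantees absolute convergence so that the Fubini separation is legitimate. Once convergence is secured, the identity is a direct consequence of separation of variables and the already established $r=1$ case.
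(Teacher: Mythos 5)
Your proposal is correct and follows essentially the same route as the paper: both start from the factorization \eqref{3}, use the separability of the kernel to split the $(r+1)$-fold integral into $\mathcal{F}(g_{r})$ with the shifted parameters times a one-dimensional integral in $x_{r+1}$, and then evaluate that last integral. The only (harmless) difference is that you identify the remaining integral as $\mathcal{F}\left(g_{1}\left(x_{r+1};n_{r+1},a_{r+1},a_{r+2},\alpha_{r+1},\alpha_{r+2}\right)\right)$ and quote \eqref{99} directly, whereas the paper re-derives it via the substitution $\tanh x_{r+1}=2t-1$, the ${}_{2}F_{1}$ expansion of the Jacobi polynomial, and term-by-term Beta integrals; both yield the same constant $2^{a_{r+1}+a_{r+2}-rn_{r+1}-1}$ and the same ${}_{3}F_{2}$ factor.
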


\begin{proof}
By using the equation \eqref{3}, the Fourier transform of the function $g_{r+1}$ is as follows
\begin{multline*}
\mathcal{F}\left(  g_{r+1}\left(  x_{1},\dots,x_{r+1};n_{1},\dots,n_{r+1},a_{1},\dots,a_{r+2},\alpha_{1},\dots,\alpha_{r+2}\right)  \right)  \\
 =\mathcal{F}\left( 2^{-rn_{r+1}} \left(  1+\tanh x_{r+1}\right)  ^{a_{r+1}}\left(  1-\tanh x_{r+1}\right)  ^{a_{r+2}}P_{n_{r+1}}^{\left(  \alpha_{r+2},\alpha_{r+1}\right)  }\left(  \tanh x_{r+1}\right) \right. \\
 \left.  \times  g_{r}\left(  x_{1},\dots,x_{r};n_{1},\dots,n_{r},a_{1},\dots,a_{r},a_{r+1}+a_{r+2}+n_{r+1},\alpha_{1},\dots \right. \right. \\
 \left. \left. ,\alpha_{r},\alpha_{r+1}+\alpha_{r+2}+2n_{r+1}+1\right)  \right)
 \end{multline*}
\begin{multline*}
 =2^{-rn_{r+1}}\int \limits_{-\infty}^{\infty}\cdots\int \limits_{-\infty}^{\infty}e^{-i\left(  \xi_{1}x_{1}+\dots+\xi_{r}x_{r}+\xi_{r+1}x_{r+1}\right)}\left(  1+\tanh x_{r+1}\right)  ^{a_{r+1}}\left(  1-\tanh x_{r+1}\right)
^{a_{r+2}} \\
 \times P_{n_{r+1}}^{\left(  \alpha_{r+2},\alpha_{r+1}\right)  }\left(  \tanh x_{r+1}\right)  g_{r}\left(x_{1},\dots,x_{r};n_{1},\dots,n_{r},a_{1},\dots\right.  \\
 \left.  ,a_{r},a_{r+1}+a_{r+2}+n_{r+1},\alpha_{1},\dots,\alpha_{r},\alpha_{r+1}+\alpha_{r+2}+2n_{r+1}+1\right)  dx_{r+1}dx_{r}\cdots dx_{1}
 \end{multline*}
\begin{multline*}
 =2^{-rn_{r+1}}\mathcal{F}\left(  g_{r}\left(  x_{1},\dots,x_{r};n_{1},\dots,n_{r},a_{1},\dots,a_{r},a_{r+1}+a_{r+2}+n_{r+1}\right.  \right.  \\
 \left. \left.  ,\alpha_{1},\dots,\alpha_{r},\alpha_{r+1}+\alpha_{r+2}+2n_{r+1}+1\right)  \right) \\
 \times \int \limits_{-\infty}^{\infty}e^{-i\xi_{r+1} x_{r+1}}\left(  1+\tanh x_{r+1}\right)^{a_{r+1}}\left(  1-\tanh x_{r+1}\right)  ^{a_{r+2}} P_{n_{r+1}}^{\left(\alpha_{r+2},\alpha_{r+1}\right)  }\left(  \tanh x_{r+1}\right)  dx_{r+1}
\end{multline*}
\begin{multline*}
 =2^{a_{r+1}+a_{r+2}-rn_{r+1}-1}\mathcal{F} \left(  g_{r}\left(  x_{1},\dots,x_{r};n_{1},\dots,n_{r},a_{1},\dots,a_{r},a_{r+1}+a_{r+2}+n_{r+1}\right.  \right.  \\
  \left.\left.  ,\alpha_{1},\dots,\alpha_{r},\alpha_{r+1}+\alpha_{r+2}+2n_{r+1}+1\right)  \right) \\
 \times \int \limits_{0}^{1} t  ^{a_{r+1}-\frac{i\xi_{r+1}}{2}-1}\left(  1-t\right)  ^{a_{r+2}+\frac{i\xi_{r+1}}{2}-1}P_{n_{r+1}}^{\left(\alpha_{r+2},\alpha_{r+1}\right)  }\left(  2t-1\right)  dt
\end{multline*}%
\begin{multline*}
 =2^{a_{r+1}+a_{r+2}-rn_{r+1}-1}\mathcal{F} \left(  g_{r}\left(  x_{1},\dots,x_{r};n_{1},\dots,n_{r},a_{1},\dots,a_{r},a_{r+1}+a_{r+2}+n_{r+1}\right.  \right.  \\
  \left.\left.  ,\alpha_{1},\dots,\alpha_{r},\alpha_{r+1}+\alpha_{r+2}+2n_{r+1}+1\right)  \right) \\
 \times \int \limits_{0}^{1} t  ^{a_{r+1}-\frac{i\xi_{r+1}}{2}-1}\left(  1-t\right)  ^{a_{r+2}+\frac{i\xi_{r+1}}{2}-1} \frac{\left(  \alpha_{r+2}+1\right)_{n_{r+1}}  }{ n_{r+1}! }\\
\times
\hyper{2}{1}{-n_{r+1}, n_{r+1}+\alpha_{r+1}+\alpha_{r+2}+1}{\alpha_{r+2}+1 }{1-t}  dt
\end{multline*}%
\begin{multline*}
 = \frac{2^{a_{r+1}+a_{r+2}-rn_{r+1}-1}\left(  \alpha_{r+2}+1\right)_{n_{r+1}}  }{ n_{r+1}! }
\mathcal{F}\left(  g_{r}\left(  x_{1},\dots,x_{r};n_{1},\dots,n_{r}\right.  \right.  \\
 \left.  \left.  ,a_{1},..,a_{r},a_{r+1}+a_{r+2}+n_{r+1},\alpha_{1},\dots,\alpha_{r},\alpha_{r+1}+\alpha_{r+2}+2n_{r+1}+1\right)  \right)  \\
 \times \sum \limits_{l=0}^{n_{r+1}}\frac{\left(  -n_{r+1}\right)  _{l}\left(n_{r+1}+\alpha_{r+1}+\alpha_{r+2}+1\right)  _{l}}{\left(  \alpha_{r+2}+1\right)_{l} l!} \int \limits_{0}^{1} t  ^{a_{r+1}-\frac{i\xi_{r+1}}{2}-1}\left(  1-t\right)^{a_{r+2}+\frac{i\xi_{r+1}}{2}-1+l}dt
\end{multline*}
\begin{multline*}
= \frac{2^{a_{r+1}+a_{r+2}-rn_{r+1}-1}\left(  \alpha_{r+2}+1\right)_{n_{r+1}}  }{ n_{r+1}! } B\left(  a_{r+1}-\frac{i\xi_{r+1}}{2},\ a_{r+2}+\frac{i\xi_{r+1}}{2}\right)
\\
\times \hyper{3}{2}{-n_{r+1},\  n_{r+1}+\alpha_{r+1}+\alpha_{r+2}+1,\ a_{r+2}+\frac{i\xi_{r+1}}{2}}{\alpha_{r+2}+1,\  a_{r+1}+a_{r+2}}{1}\\
\times \mathcal{F}\left(  g_{r}\left(  x_{1},\dots,x_{r};n_{1},\dots,n_{r},a_{1},..,a_{r},a_{r+1}+a_{r+2}+n_{r+1}\right.  \right.  \\
\left.  \left. ,\alpha_{1},\dots,\alpha_{r},\alpha_{r+1}+\alpha_{r+2}+2n_{r+1}+1\right)  \right),
\end{multline*}
which proves the theorem.
\end{proof}

\bigskip

By applying Theorem 3.1 successively, the following theorem can be given.

\begin{theorem}
By substituting Fourier transform of the specific function in the right-hand
side of the equality (\ref{4}), we have%
\begin{multline}\label{A}
\mathcal{F}
\left(  g_{r}\left(  \boldsymbol{x};\mathbf{n},\mathbf{a},\mathbf{\alpha} \right)  \right) \\
=2^{r\left({a}_{r}+a_{r+1}-1 \right )
+\sum \limits_{j=1}^{r-1}ja_{j}} \prod \limits_{j=1}^{r}\left \{  \frac{\left( 2\left \vert
\mathbf{n}^{j+1}\right \vert+\left \vert
\mathbf{\alpha}^{j+1}\right \vert+r-j+1\right)_{n_{j}}  }{n_{j}!} \Lambda_{j}^{r}(\mathbf{a},\mathbf{\alpha},\mathbf{n};\xi_{j})
\right \},
\end{multline}
where
\begin{multline*}
\Lambda_{j}^{r}(\mathbf{a},\mathbf{\alpha},\mathbf{n};\xi_{j})=B\left(
a_{j}-\frac{i\xi_{j}}{2},\  \left \vert \mathbf{n}^{j+1}\right \vert +\left \vert
\mathbf{a}^{j+1}\right \vert +\frac{i\xi_{j}}{2}\right)  \\
\times \hyper{3}{2}{-n_{j},\ n_{j}+2\left \vert \mathbf{n}^{j+1}\right \vert +\left \vert \mathbf{\alpha}^{j}\right \vert +r-j+1,\ \left \vert \mathbf{n}^{j+1}\right \vert +\left \vert \mathbf{a}^{j+1}\right \vert+\frac{i\xi_{j}}{2}}{2\left \vert \mathbf{n}^{j+1}\right \vert +\left \vert \mathbf{\alpha}^{j+1}\right \vert +r-j+1,\ \left \vert \mathbf{n}^{j+1}\right \vert +\left \vert \mathbf{a}^{j}\right \vert }{1},
\end{multline*}
or, in terms of the continuous Hahn polynomials \eqref{hahn}
\begin{multline*}
\Lambda_{j}^{r}(\mathbf{a},\mathbf{\alpha},\mathbf{n};\xi_{j})=
 \frac{n_{j}!B\left(
a_{j}-\frac{i\xi_{j}}{2},\  \left \vert \mathbf{n}^{j+1}\right \vert +\left \vert
\mathbf{a}^{j+1}\right \vert +\frac{i\xi_{j}}{2}\right)}{i^{n_{j}}\left( \left \vert \mathbf{n}^{j+1}\right \vert
+\left \vert \mathbf{a}^{j}\right \vert \right) _{n_{j}}\left( 2\left \vert
\mathbf{n}^{j+1}\right \vert +\left \vert \alpha ^{j+1}\right \vert
+r-j+1\right) _{n_{j}}} \\
\times p_{nj}\left( \frac{\xi _{j}}{2};\left \vert \mathbf{n}^{j+1}\right \vert
+\left \vert \mathbf{a}^{j+1}\right \vert ,\alpha _{j}-a_{j}+1,\left \vert
\mathbf{n}^{j+1}\right \vert +\left \vert \alpha ^{j+1}\right \vert -\left \vert
\mathbf{a}^{j+1}\right \vert +r-j+1,a_{j}\right),
\end{multline*}
 $ g_{r}\left(  \boldsymbol{x};\mathbf{n},\mathbf{a},\mathbf{\alpha} \right) $ is the function given in \eqref{3} for $r\geq1$, and
\begin{gather*}
\pmb{x}  \boldsymbol{=}\left(  x_{1},\dots,x_{r}\right), \quad \mathbf{n}=\left(  n_{1},\dots,n_{r}\right), \quad \mathbf{n}^{j}=\left(  n_{j},\dots,n_{r}\right), \\ \left \vert \mathbf{n}^{j}\right \vert=n_{j}+\cdots+n_{r},\quad
\mathbf{\alpha}  =\left(  \alpha_{1},\dots,\alpha_{r+1}\right),\quad  \mathbf{\alpha}^{j}=\left(  \alpha_{j},\dots,\alpha_{r+1}\right), \\
\mathbf{a}=\left(  a_{1},\dots,a_{r+1}\right), \mathbf{a}^{j}=\left(  a_{j},\dots,a_{r+1}\right), \quad \left \vert \mathbf{a}^{j}\right \vert  =a_{j}+\cdots+a_{r+1}.
\end{gather*}
\end{theorem}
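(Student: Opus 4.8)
The plan is to prove \eqref{A} by induction on $r$, with Theorem 3.1 furnishing the inductive step and the already-established cases $r=1$ (equation \eqref{99}) and $r=2$ (equation \eqref{1}) serving as the base. The statement itself records that the formula is obtained ``by substituting Fourier transform of the specific function in the right-hand side of \eqref{4}'', so the entire content is to verify that a single application of \eqref{4} advances \eqref{A} from $r$ to $r+1$. As a sanity check I would first note that \eqref{A} at $r=1$ already collapses to \eqref{99}: there $\mathbf{n}^{2}$ is empty, $|\mathbf{\alpha}^{2}|=\alpha_{2}$, $|\mathbf{a}^{2}|=a_{2}$, the prefactor reduces to $2^{a_{1}+a_{2}-1}(\alpha_{2}+1)_{n_{1}}/n_{1}!$, and $\Lambda_{1}^{1}$ reproduces Koelink's Beta times $_3F_2$ (using $B(a_{2}+\tfrac{i\xi}{2},a_{1}-\tfrac{i\xi}{2})=B(a_{1}-\tfrac{i\xi}{2},a_{2}+\tfrac{i\xi}{2})$).

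First I would fix notation for the shifted parameters produced when Theorem 3.1 expresses $\mathcal{F}(g_{r+1})$ through $\mathcal{F}(g_{r})$. Writing $\tilde{\mathbf{a}}=(a_{1},\dots,a_{r},a_{r+1}+a_{r+2}+n_{r+1})$, $\tilde{\mathbf{\alpha}}=(\alpha_{1},\dots,\alpha_{r},\alpha_{r+1}+\alpha_{r+2}+2n_{r+1}+1)$ and $\tilde{\mathbf{n}}=(n_{1},\dots,n_{r})$ for the argument of $g_{r}$ on the right-hand side of \eqref{4}, the induction hypothesis supplies $\mathcal{F}(g_{r}(\boldsymbol{x};\tilde{\mathbf{n}},\tilde{\mathbf{a}},\tilde{\mathbf{\alpha}}))$ in the product form \eqref{A}. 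The heart of the argument is a bookkeeping lemma relating the truncated sums of the shifted and unshifted multi-indices: for $1\le j\le r$ one has $|\tilde{\mathbf{n}}^{j+1}|=|\mathbf{n}^{j+1}|-n_{r+1}$, $|\tilde{\mathbf{a}}^{j+1}|=|\mathbf{a}^{j+1}|+n_{r+1}$, $|\tilde{\mathbf{a}}^{j}|=|\mathbf{a}^{j}|+n_{r+1}$, $|\tilde{\mathbf{\alpha}}^{j+1}|=|\mathbf{\alpha}^{j+1}|+2n_{r+1}+1$ and $|\tilde{\mathbf{\alpha}}^{j}|=|\mathbf{\alpha}^{j}|+2n_{r+1}+1$ (the left-hand vectors having one fewer component than the right-hand ones). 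These identities are immediate from the definitions, and I would record them once at the outset.

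With these relations in hand I would check, factor by factor, that substituting the induction hypothesis for $\mathcal{F}(g_{r})$ into the right-hand side of \eqref{4} reproduces \eqref{A} written for $r+1$. Three things must match. (i) The power of two: adding the exponent $a_{r+1}+a_{r+2}-rn_{r+1}-1$ from \eqref{4} to the exponent $r(\tilde{a}_{r}+\tilde{a}_{r+1}-1)+\sum_{j=1}^{r-1}j\tilde{a}_{j}$ of the hypothesis, and using $\tilde{a}_{r+1}=a_{r+1}+a_{r+2}+n_{r+1}$, makes the $\pm rn_{r+1}$ terms cancel and yields exactly $(r+1)(a_{r+1}+a_{r+2}-1)+\sum_{j=1}^{r}ja_{j}$. (ii) The factors with $1\le j\le r$: the key cancellations are $2|\tilde{\mathbf{n}}^{j+1}|+|\tilde{\mathbf{\alpha}}^{j+1}|+(r-j+1)=2|\mathbf{n}^{j+1}|+|\mathbf{\alpha}^{j+1}|+((r+1)-j+1)$ in the Pochhammer index and in the lower $_3F_2$ entry, together with $|\tilde{\mathbf{n}}^{j+1}|+|\tilde{\mathbf{a}}^{j+1}|=|\mathbf{n}^{j+1}|+|\mathbf{a}^{j+1}|$ and $|\tilde{\mathbf{n}}^{j+1}|+|\tilde{\mathbf{a}}^{j}|=|\mathbf{n}^{j+1}|+|\mathbf{a}^{j}|$ in the Beta and in the remaining $_3F_2$ entries; since $\tilde{a}_{j}=a_{j}$ and $\tilde{n}_{j}=n_{j}$ for $j\le r$, every argument of $\Lambda_{j}^{r}(\tilde{\mathbf{a}},\tilde{\mathbf{\alpha}},\tilde{\mathbf{n}};\xi_{j})$ turns into the corresponding argument of $\Lambda_{j}^{r+1}(\mathbf{a},\mathbf{\alpha},\mathbf{n};\xi_{j})$. (iii) The new $j=r+1$ factor: because $\mathbf{n}^{r+2}$ is empty, $|\mathbf{\alpha}^{r+2}|=\alpha_{r+2}$, $|\mathbf{a}^{r+2}|=a_{r+2}$, $|\mathbf{a}^{r+1}|=a_{r+1}+a_{r+2}$ and $|\mathbf{\alpha}^{r+1}|=\alpha_{r+1}+\alpha_{r+2}$, the prefactor $(\alpha_{r+2}+1)_{n_{r+1}}/n_{r+1}!$, the Beta $B(a_{r+1}-\tfrac{i\xi_{r+1}}{2},a_{r+2}+\tfrac{i\xi_{r+1}}{2})$ and the $_3F_2$ displayed in \eqref{4} are precisely the $j=r+1$ summand $\frac{(2|\mathbf{n}^{r+2}|+|\mathbf{\alpha}^{r+2}|+1)_{n_{r+1}}}{n_{r+1}!}\Lambda_{r+1}^{r+1}$ of \eqref{A} for $r+1$.

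I expect the only genuine obstacle to be this index bookkeeping in steps (ii)--(iii): one must keep straight that raising $r$ to $r+1$ increments the additive constant $r-j+1$ appearing throughout \eqref{A}, while simultaneously the shifts $n_{r+1}$ and $2n_{r+1}+1$ enter the truncated $\mathbf{a}$-, $\mathbf{n}$- and $\mathbf{\alpha}$-sums, and verify that these two effects cancel in exactly the combinations that occur. Since $n_{r+1}$ and $2n_{r+1}+1$ enter the Pochhammer index and the upper and lower $_3F_2$ parameters with precisely matching coefficients, the cancellation is clean; once the bookkeeping lemma is recorded, the termwise comparison is routine, and the equivalence with the continuous-Hahn form follows from \eqref{hahn} exactly as in the cases $r=1,2$. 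This completes the induction and hence the proof of \eqref{A}.
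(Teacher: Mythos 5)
Your proposal is correct and follows exactly the paper's route: induction on $r$ with the base cases $r=1,2$ from Subsections 3.1--3.2 and Theorem 3.1 as the inductive step. In fact you supply the parameter-shift bookkeeping (the identities $|\tilde{\mathbf{n}}^{j+1}|=|\mathbf{n}^{j+1}|-n_{r+1}$, $|\tilde{\mathbf{a}}^{j+1}|=|\mathbf{a}^{j+1}|+n_{r+1}$, $|\tilde{\mathbf{\alpha}}^{j+1}|=|\mathbf{\alpha}^{j+1}|+2n_{r+1}+1$ and the resulting cancellations in the exponent of $2$, the Pochhammer symbols, the Beta factors and the $_{3}F_{2}$ parameters) that the paper's one-sentence proof leaves implicit, and all of these checks are accurate.
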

\begin{proof}
 In subsection 3.1 and 3.2, we show that for the cases $r=1$ and $r=2$ this result is satisfied. By using induction method, when we apply Theorem 3.1 successively, it is satisfied for each $r$.
 \end{proof}

\section{Some classes of special functions using Fourier transforms of orthogonal polynomials on the simplex}\label{sec:ops}

The Parseval's identity corresponding to \eqref{16}, is given by the statement (cf. \cite{MR0942661})
\[
\int \limits_{-\infty}^{\infty}g\left(  x\right)  \overline{h\left(  x\right)}dx=\frac{1}{2\pi}\int \limits_{-\infty}^{\infty}\mathcal{F}\left(  g\left(  x\right)  \right)  \overline{\mathcal{F}\left(  h\left(  x\right)  \right)  }d\xi,
\]
and the Parseval's identity corresponding to \eqref{17}, is given by (cf. \cite{MR1867914})
\begin{multline}\label{19}
 \int \limits_{-\infty}^{\infty}\cdots\int \limits_{-\infty}^{\infty}g\left(x_{1},\dots,x_{r}\right)  \overline{h\left(  x_{1},\dots,x_{r}\right)  }dx_{1}\dots dx_{r}\\
  =\frac{1}{\left(  2\pi \right)  ^{r}}\int \limits_{-\infty}^{\infty}\dots\int \limits_{-\infty}^{\infty}\mathcal{F}\left(  g\left(  x_{1},\dots,x_{r}\right)  \right)  \overline{\mathcal{F}\left(  h\left(  x_{1},\dots,x_{r}\right)  \right)  }d\xi_{1}\cdots d\xi_{r}.
\end{multline}

\subsection{The class of special functions using Fourier transform of Jacobi polynomials on the 1-simplex}

By use of \eqref{55} and \eqref{99} in Parseval's identity, Koelink \cite{MR1307541} proved that the special function%
\begin{multline}\label{1boyut}
\ _{1}S_{n}\left( x;a_{1},a_{2},b_{1},b_{2}\right) =\hyper{3}{2}{-n ,\
a_{2}+x/2,\ n+a_{1}+a_{2}+b_{1}+b_{2}-1}{a_{2}+b_{2},\ a_{1}+a_{2}}{1}
\\
\\
=\frac{n!i^{-n}}{\left( a_{1}+a_{2}\right) _{n}\left( a_{2}+b_{2}\right) _{n}%
}p_{n}\left( \frac{-ix}{2};a_{2},b_{1},b_{2},a_{1}\right)
\end{multline}
has an orthogonality relation of the form%
\begin{multline*}
\int \limits_{-\infty }^{\infty }\Gamma \left( a_{1}-\frac{ix}{2}\right)
\Gamma \left( a_{2}+\frac{ix}{2}\right) \Gamma \left( b_{1}+\frac{ix}{2}%
\right) \Gamma \left( b_{2}-\frac{ix}{2}\right)  \\
\times \ _{1}S_{n}\left( ix;a_{1},a_{2},b_{1},b_{2}\right) \ _{1}S_{m}\left(
-ix;b_{1},b_{2},a_{1},a_{2}\right) dx \\
=\frac{2^{2}\pi n!\Gamma \left( n+a_{1}+b_{1}\right) \Gamma ^{2}\left(
a_{2}+b_{2}\right) \Gamma \left( a_{1}+a_{2}\right) }{\left(
2n+a_{1}+a_{2}+b_{1}+b_{2}-1\right) \Gamma \left( n+a_{2}+b_{2}\right) }  \frac{\Gamma \left( b_{1}+b_{2}\right) }{\Gamma \left(
n+a_{1}+a_{2}+b_{1}+b_{2}-1\right) }\delta _{n,m}\\
= \frac{2^{2}\pi \left(
n!\right) ^{2}~\Gamma \left( a_{1}+a_{2}\right) \Gamma \left(
b_{1}+b_{2}\right) }{\left( \left( a_{2}+b_{2}\right) _{n}\right) ^{2}}%
h_{n}^{\left( a_{1}+b_{1}-1,a_{2}+b_{2}-1\right) } \delta _{n,m},
\end{multline*}%
where $h_{n}^{\left( a_{1}+b_{1}-1,a_{2}+b_{2}-1\right) }$ is defined as \eqref{B}, from which, it follows
\begin{multline*}
\int \limits_{-\infty }^{\infty }\Gamma \left( a_{2}+ix\right) \Gamma \left(
a_{1}-ix\right) \Gamma \left( b_{1}+ix\right) \Gamma \left( b_{2}-ix\right)
\\
\times p_{n}\left( x;\ a_{2},\ b_{1},\ b_{2},\ a_{1}\right) p_{m}\left( x;\
a_{2},\ b_{1},\ b_{2},\ a_{1}\right) dx \\
=\frac{2\pi \Gamma \left( n+a_{2}+b_{2}\right) \Gamma \left(
n+a_{1}+b_{1}\right) \Gamma \left( n+a_{1}+a_{2}\right) \Gamma \left(
n+b_{1}+b_{2}\right) }{n!\left( 2n+a_{1}+a_{2}+b_{1}+b_{2}-1\right) \Gamma
\left( n+a_{1}+a_{2}+b_{1}+b_{2}-1\right) }\delta _{n,m},
\end{multline*}%
for $a_{1}$, $a_{2}$, $b_{1}$, $b_{2}>0$.

\subsection{The class of special functions using Fourier transform of two-dimensional polynomials on the 2-simplex}

By substituting \eqref{5} and \eqref{1} in Parseval's identity we get
\begin{multline*}
 \int \limits_{-\infty}^{\infty}\int \limits_{-\infty}^{\infty}g_{2}\left(x_{1},x_{2};n_{1},n_{2},a_{1},a_{2},a_{3},\alpha_{1},\alpha_{2},\alpha_{3}\right)  \\
 \times g_{2}\left(  x_{1},x_{2};m_{1},m_{2},b_{1},b_{2},b_{3},\beta
_{1},\beta_{2},\beta_{3}\right)  dx_{1}dx_{2}
\end{multline*}
\begin{multline*}
 =\int \limits_{-\infty}^{\infty}\int \limits_{-\infty}^{\infty}\left(  1+\tanh x_{1}\right)  ^{a_{1}+b_{1}}\left(  1-\tanh x_{1}\right)  ^{a_{2}+a_{3}+b_{2}+b_{3}}\left(  1+\tanh x_{2}\right)  ^{a_{2}+b_{2}} \\
 \times \left(  1-\tanh x_{2}\right)  ^{a_{3}+b_{3}} P_{n_{1},n_{2}}^{\left(  \alpha_{1},\alpha_{2},\alpha_{3}\right)}\left( \Upsilon_{1}(x_{1}),\Upsilon_{2}(x_{1},x_{2}) \right) \\
\times  P_{m_{1},m_{2}}^{\left(  \beta_{1},\beta_{2},\beta_{3}\right)}\left(  \Upsilon_{1}(x_{1}),\Upsilon_{2}(x_{1},x_{2})  \right)  dx_{1}dx_{2}
 \end{multline*}
\begin{multline*}
 =\frac{1}{\left(  2\pi \right)  ^{2}}\int \limits_{-\infty}^{\infty}\int \limits_{-\infty}^{\infty}\frac{\left( 2n_{2}+\alpha_{2}+\alpha_{3}+2\right)_{n_{1}}\left( 2m_{2}+\beta_{2}+\beta_{3}+2\right)_{m_{1}} \left( \alpha_{3}+1\right)_{n_{2}}}{
2^{4-a_{1}-b_{1}-2\left(  a_{2}+a_{3}+b_{2}+b_{3}\right)}
 n_{1}!n_{2}!m_{1}!m_{2}! }\\
 \times \left(\beta_{3}+1\right)_{m_{2}} \Lambda_{1}^{2}(\mathbf{a},\mathbf{\alpha},\mathbf{n};\xi_{1}) \overline{\Lambda_{1}^{2}(\mathbf{b},\mathbf{\beta},\mathbf{m};\xi_{1}) }
 \Lambda_{2}^{2}(\mathbf{a},\mathbf{\alpha},\mathbf{n};\xi_{2}) \overline{\Lambda_{2}^{2}(\mathbf{b},\mathbf{\beta},\mathbf{m};\xi_{2}) }
 d\xi_{1}d\xi_{2}
 \end{multline*}
 where  $\mathbf{n}=\left(  n_{1}, n_{2}\right)$, $\mathbf{m}=\left(  m_{1}, m_{2}\right)$, $\mathbf{\alpha}=\left( \alpha_{1},\alpha_{2},\alpha_{3}\right)$, $\mathbf{\beta}=\left( \beta_{1},\beta_{2},\beta_{3}\right)$, $\mathbf{a}=\left(  a_{1},a_{2},a_{3}\right)$ and $\mathbf{b}=\left(  b_{1},b_{2},b_{3}\right)$.

Now, using the transforms $\tanh x_{1}=2u-1$ and $\tanh x_{2}=\frac{2v}{1-u}-1$ in the left-hand side of the equality, respectively, we have
\begin{multline}\label{6}
 2^{4}\pi^{2}\int \limits_{0}^{1}\int \limits_{0}^{1-u} u^{a_{1}+b_{1}-1} v^{a_{2}+b_{2}-1}\left(  1-u-v\right)  ^{a_{3}+b_{3}-1}P_{n_{1},n_{2}}^{\left(\alpha_{1},\alpha_{2},\alpha_{3}\right)  }\left(  u,v\right)  P_{m_{1},m_{2}%
}^{\left(  \beta_{1},\beta_{2},\beta_{3}\right)  }\left(  u,v\right)dvdu\\
=\int \limits_{-\infty}^{\infty}\int \limits_{-\infty}^{\infty}\frac{ \left(  2n_{2}+\alpha_{2}+\alpha_{3}+2\right)_{n_{1}} \left(  2m_{2}+\beta_{2}+\beta_{3}+2\right)_{m_{1}} \left( \alpha_{3}+1\right)_{n_{2}} \left( \beta_{3}+1\right)_{m_{2}}}{n_{1}!n_{2}!m_{1}!m_{2}!}\\
\times
\Lambda_{1}^{2}(\mathbf{a},\mathbf{\alpha},\mathbf{n};\xi_{1})\overline{\Lambda_{1}^{2}(\mathbf{b},\mathbf{\beta},\mathbf{m};\xi_{1}) } \Lambda_{2}^{2}(\mathbf{a},\mathbf{\alpha},\mathbf{n};\xi_{2}) \overline{\Lambda_{2}^{2}(\mathbf{b},\mathbf{\beta},\mathbf{m};\xi_{2}) } d\xi_{1}d\xi_{2}.
\end{multline}
On the other hand, if in the left-hand side of (\ref{6}) we take $a_{1}+b_{1}-1=\alpha_{1}=\beta_{1}$, $a_{2}+b_{2}-1=\alpha_{2}=\beta_{2}$ and $a_{3}+b_{3}-1=\alpha_{3}=\beta_{3}$ then according to the orthogonality relation \eqref{A:3}, equation \eqref{6} reduces to
\begin{multline*}
2^{4}\pi^{2}\int \limits_{0}^{1}\int \limits_{0}^{1-u} u^{a_{1}+b_{1}-1} v^{a_{2}+b_{2}-1}\left(  1-u-v\right)  ^{a_{3}+b_{3}-1}\\
\times P_{n_{1},n_{2}}^{\left(a_{1}+b_{1}-1,a_{2}+b_{2}-1,a_{3}+b_{3}-1\right)  }\left(  u,v\right)  P_{m_{1},m_{2}%
}^{\left( a_{1}+b_{1}-1,a_{2}+b_{2}-1,a_{3}+b_{3}-1\right)  }\left(  u,v\right)dvdu\\
=2^{4}\pi^{2}h_{n_{1},n_{2}}^{\left(  a_{1}+b_{1}-1,a_{2}+b_{2}-1,a_{3}+b_{3}-1\right)  }\delta_{n_{1},m_{1}}\delta_{n_{2},m_{2}}
\end{multline*}
\begin{multline*}
=\int \limits_{-\infty}^{\infty}\int \limits_{-\infty}^{\infty}\frac{ \left(  2n_{2}+\alpha_{2}+\alpha_{3}+2\right)_{n_{1}} \left(  2m_{2}+\beta_{2}+\beta_{3}+2\right)_{m_{1}} \left( \alpha_{3}+1\right)_{n_{2}} \left( \beta_{3}+1\right)_{m_{2}}}{n_{1}!n_{2}!m_{1}!m_{2}!}\\
\times
\Lambda_{1}^{2}(\mathbf{a},\mathbf{\alpha},\mathbf{n};\xi_{1})\overline{\Lambda_{1}^{2}(\mathbf{b},\mathbf{\beta},\mathbf{m};\xi_{1}) } \Lambda_{2}^{2}(\mathbf{a},\mathbf{\alpha},\mathbf{n};\xi_{2}) \overline{\Lambda_{2}^{2}(\mathbf{b},\mathbf{\beta},\mathbf{m};\xi_{2}) } d\xi_{1}d\xi_{2},
\end{multline*}
so
\begin{multline*}
 \frac{2^{4}\pi^{2}\left(  n_{1}!\right)  ^{2}\left(  n_{2}!\right)  ^{2}\Gamma\left(  n_{2}+a_{1}+a_{2}+a_{3}\right)  \Gamma\left(  n_{2}+b_{1}+b_{2}+b_{3}\right)\Gamma\left(  a_{2}+a_{3}\right)  \Gamma\left(  b_{2}+b_{3}\right)}{\left(  2n_{2}+a_{2}+b_{2}+a_{3}+b_{3}\right)_{n_{1}}^{2}  \left(  a_{3}+b_{3}\right)_{n_{2}}^{2} }\\
\times h_{n_{1},n_{2}}^{\left(  a_{1}+b_{1}-1,a_{2}+b_{2}-1,a_{3}+b_{3}-1\right)  }\delta_{n_{1},m_{1}}\delta_{n_{2},m_{2}}%
\end{multline*}
\begin{multline*}
 =\int \limits_{-\infty}^{\infty}\int \limits_{-\infty}^{\infty}\Theta_{2}\left(  \xi_{1},\xi_{2};a_{1},a_{2},a_{3}\right)  \Lambda_{2}\left(\xi_{1},\xi_{2};n_{1},n_{2},a_{1},a_{2},a_{3},b_{1},b_{2},b_{3}\right)  \\
\times \overline{\Theta_{2}\left(  \xi_{1},\xi_{2};b_{1},b_{2},b_{3}\right)\Lambda_{2}\left(  \xi_{1},\xi_{2};m_{1},m_{2},b_{1},b_{2},b_{3},a_{1},a_{2},a_{3}\right)  }d\xi_{1}d\xi_{2}
\end{multline*}
where
\begin{multline*}
\Theta_{2}\left(  \xi_{1},\xi_{2};a_{1},a_{2},a_{3}\right)\\ =\Gamma \left(a_{1}-\frac{i\xi_{1}}{2}\right)  \Gamma \left(  a_{2}+a_{3}+\frac{i\xi_{1}}{2}\right)  \Gamma \left(  a_{2}-\frac{i\xi_{2}}{2}\right) \Gamma \left(  a_{3}+\frac{i\xi_{2}}{2}\right)  \left(  a_{2}+a_{3}+\frac{i\xi_{1}}{2}\right)_{n_{2}},
\end{multline*}%
\begin{multline*}
\Lambda_{2}\left(  \xi_{1},\xi_{2};n_{1},n_{2},a_{1},a_{2},a_{3},b_{1},b_{2},b_{3}\right)  \\
=\hyper{3}{2}{-n_{2},\  n_{2}+a_{2}+b_{2}+a_{3}+b_{3}-1,\ a_{3}+\frac{i\xi_{2}}{2}}{a_{3}+b_{3},\   a_{2}+a_{3} }{1}      \\
\times \hyper{3}{2}{-n_{1},\ n_{1}+2n_{2}+a_{1}+b_{1}+a_{2}+b_{2}+a_{3}+b_{3}-1 ,\ n_{2}+a_{2}+a_{3}+\frac{i\xi_{1}}{2}}{2n_{2}+a_{2}+b_{2}+a_{3}+b_{3},\ n_{2}+a_{1}+a_{2}+a_{3}}{1},
\end{multline*}
or, in terms of the Hahn polynomials \eqref{hahn}
\begin{multline*}
\Lambda_{2}\left(  \xi_{1},\xi_{2};n_{1},n_{2},a_{1},a_{2},a_{3},b_{1},b_{2},b_{3}\right)  \\
=\frac{n_{1}!n_{2}!i^{-n_{1}-n_{2}}}{\left( a_{2}+a_{3}\right)
_{n_{2}}\left( a_{3}+b_{3}\right) _{n_{2}}\left(
n_{2}+a_{1}+a_{2}+a_{3}\right) _{n_{1}}\left(
2n_{2}+a_{2}+b_{2}+a_{3}+b_{3}\right) _{n_{1}}} \\
 \times p_{n_{1}}\left( \frac{\xi _{1}}{2}%
;n_{2}+a_{2}+a_{3},b_{1},n_{2}+b_{2}+b_{3},a_{1}\right)
 p_{n_{2}}\left( \frac{\xi _{2}}{2};a_{3},b_{2},b_{3},a_{2}\right),
\end{multline*}
and $h_{n_{1},n_{2}}^{\left(  \alpha_{1},\alpha_{2},\alpha_{3}\right)  }$ is given by \eqref{B}. As a result, the following theorem can be given.

\begin{theorem}
The families of functions
\begin{multline*}
\ _{2}S_{n_{1},n_{2}}\left(  x_{1},x_{2};a_{1},a_{2},a_{3},b_{1},b_{2},b_{3}\right)  \\
=\left(  a_{2}+a_{3}+\frac{x_{1}}{2}\right)_{n_{2}}\Lambda_{2}\left(  -ix_{1},-ix_{2};n_{1},n_{2},a_{1},a_{2},a_{3},b_{1},b_{2},b_{3}\right)
\end{multline*}
are orthogonal with respect to the weight function
\begin{multline*}
W_{2}\left(  x_{1},x_{2},a_{1},a_{2},a_{3},b_{1},b_{2},b_{3}\right)  \\
=\Gamma \left(  a_{1}-\frac{ix_{1}}{2}\right)  \Gamma \left(  a_{2}+a_{3}+\frac{ix_{1}}{2}\right)  \Gamma \left(  a_{2}-\frac{ix_{2}}{2}\right)  \Gamma \left(  a_{3}+\frac{ix_{2}}{2}\right)  \\
\times \Gamma \left(  b_{1}+\frac{ix_{1}}{2}\right)  \Gamma \left(b_{2}+b_{3}-\frac{ix_{1}}{2}\right)  \Gamma \left(  b_{2}+\frac{ix_{2}}{2}\right)  \Gamma \left(  b_{3}-\frac{ix_{2}}{2}\right)  ,
\end{multline*}
and the corresponding orthogonality relation is
\begin{multline*}
 \int \limits_{-\infty}^{\infty}\int \limits_{-\infty}^{\infty}W_{2}\left(x_{1},x_{2},a_{1},a_{2},a_{3},b_{1},b_{2},b_{3}\right)  \ _{2}S_{n_{1},n_{2}}\left(  ix_{1},ix_{2};a_{1},a_{2},a_{3},b_{1},b_{2},b_{3}\right)  \\
 \times \ _{2}S_{m_{1},m_{2}}\left(  -ix_{1},-ix_{2};b_{1},b_{2},b_{3},a_{1},a_{2},a_{3}\right)  dx_{1}dx_{2} \\
 =\frac{2^{4}\pi^{2}\left(  n_{1}!\right)  ^{2}\left(  n_{2}!\right)  ^{2}\Gamma\left(  n_{2}+a_{1}+a_{2}+a_{3}\right)  \Gamma\left(  n_{2}+b_{1}+b_{2}+b_{3}\right)\Gamma\left(  a_{2}+a_{3}\right)  \Gamma\left(  b_{2}+b_{3}\right)}{\left(  2n_{2}+a_{2}+b_{2}+a_{3}+b_{3}\right)_{n_{1}}^{2}  \left(  a_{3}+b_{3}\right)_{n_{2}}^{2} }\\
\times h_{n_{1},n_{2}}^{\left(  a_{1}+b_{1}-1,a_{2}+b_{2}-1,a_{3}+b_{3}-1\right)  }\delta_{n_{1},m_{1}}\delta_{n_{2},m_{2}}%
\end{multline*}
for $a_{1}$, $a_{2}$, $a_{3}$, $b_{1}$, $b_{2}$, $b_{3}>0$ where $h_{n_{1},n_{2}}^{\left(  \alpha_{1},\alpha_{2},\alpha_{3}\right)  }$ is given by \eqref{B}.
\end{theorem}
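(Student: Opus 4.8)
The plan is to read off the asserted orthogonality relation as the specialization of Parseval's identity \eqref{19} applied to two copies of the function $g_2$ from \eqref{5}. Concretely, I would set $g=g_2(x_1,x_2;n_1,n_2,a_1,a_2,a_3,\alpha_1,\alpha_2,\alpha_3)$ and $h=g_2(x_1,x_2;m_1,m_2,b_1,b_2,b_3,\beta_1,\beta_2,\beta_3)$ in \eqref{19}, insert the explicit Fourier transform \eqref{1} on the right-hand side, and then impose the parameter constraints $a_i+b_i-1=\alpha_i=\beta_i$ for $i=1,2,3$. This is exactly the computation already displayed in \eqref{6} and the lines following it; the theorem merely packages its two ends into the language of $W_2$ and ${}_{2}S_{n_1,n_2}$.

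On the left-hand side I would perform the substitutions $\tanh x_1=2u-1$ and $\tanh x_2=2v/(1-u)-1$, which turn $g\,h$ into the weighted simplex integral of $P^{(\boldsymbol{\alpha})}_{n_1,n_2}\,P^{(\boldsymbol{\beta})}_{m_1,m_2}$ over $T^2$ with weight exponents $a_1+b_1-1$, $a_2+b_2-1$, $a_3+b_3-1$; tracking the $(1\pm\tanh)$ powers, the Jacobian, and the factor $(2\pi)^{-2}$ of \eqref{19} produces the displayed constant $2^4\pi^2$ on that side. Under the constraint above these exponents are precisely those of $W_{\boldsymbol{\alpha}}$ and the two polynomials share the same parameters, so the orthogonality relation \eqref{A:3} with the explicit value \eqref{B} collapses the left-hand side to $2^4\pi^2\,h^{(a_1+b_1-1,a_2+b_2-1,a_3+b_3-1)}_{n_1,n_2}\,\delta_{n_1,m_1}\delta_{n_2,m_2}$.

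The right-hand side is where the bookkeeping lives. The key observation is that the constraint $\alpha_i=\beta_i=a_i+b_i-1$ is exactly what makes the four ${}_3F_2$ factors coming from $\Lambda_1^2(\mathbf{a},\boldsymbol{\alpha},\mathbf{n};\xi_1)$, $\Lambda_2^2(\mathbf{a},\boldsymbol{\alpha},\mathbf{n};\xi_2)$ and their barred $\mathbf{b}$-counterparts coalesce into the product $\Lambda_2(\xi_1,\xi_2;n_1,n_2,\mathbf{a},\mathbf{b})\,\overline{\Lambda_2(\xi_1,\xi_2;m_1,m_2,\mathbf{b},\mathbf{a})}$: for instance the numerator entry $n_1+2n_2+\alpha_1+\alpha_2+\alpha_3+2$ becomes $n_1+2n_2+a_1+b_1+a_2+b_2+a_3+b_3-1$, matching $\Lambda_2$, and the denominator $2n_2+\alpha_2+\alpha_3+2$ becomes $2n_2+a_2+b_2+a_3+b_3$. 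Each Beta factor is then expanded via $B(p,q)=\Gamma(p)\Gamma(q)/\Gamma(p+q)$; the four $\xi_1$- and $\xi_2$-dependent numerator Gammas are collected into $W_2$ (renaming $\xi_j\to x_j$), using $\overline{\Gamma(z)}=\Gamma(\bar z)$ and the reality of all parameters to turn the $\mathbf{b}$-Gammas into their conjugates. The shift $\Gamma(n_2+a_2+a_3+i\xi_1/2)=(a_2+a_3+i\xi_1/2)_{n_2}\,\Gamma(a_2+a_3+i\xi_1/2)$ separates the pure-Gamma weight from the Pochhammer prefactor of $\Theta_2$, and this Pochhammer is exactly the factor $(a_2+a_3+x_1/2)_{n_2}$ built into the definition of ${}_{2}S_{n_1,n_2}$ after setting $\xi_1=x_1$. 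Matching arguments identifies the unbarred block with ${}_{2}S_{n_1,n_2}(ix_1,ix_2;\mathbf{a},\mathbf{b})$ and the barred block with ${}_{2}S_{m_1,m_2}(-ix_1,-ix_2;\mathbf{b},\mathbf{a})$, the conjugation identity $\overline{\Lambda_2(x_1,x_2)}=\Lambda_2(-x_1,-x_2)$ being consistent with evaluating the second factor at $-ix_1,-ix_2$.

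The main obstacle is the constant and Gamma bookkeeping rather than any single hard step: one must verify that the Pochhammer and factorial prefactors of \eqref{1}, the denominators $\Gamma(n_2+a_1+a_2+a_3)$ and $\Gamma(a_2+a_3)$ (together with their $\mathbf{b}$-analogues) produced by the Beta functions, and the explicit value \eqref{B} of $h$ all combine into the single constant displayed in the theorem, and that every occurrence of $i$ flips sign correctly under conjugation so that the roles $(\mathbf{a},\mathbf{n})\leftrightarrow(\mathbf{b},\mathbf{m})$ are exchanged consistently. Everything else reduces to the substitution and a direct appeal to \eqref{A:3}.
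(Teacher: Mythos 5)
Your proposal is correct and follows essentially the same route as the paper: the paper's own derivation in the lines preceding the theorem applies Parseval's identity \eqref{19} to two copies of $g_{2}$ with the transforms \eqref{1}, performs the substitutions $\tanh x_{1}=2u-1$, $\tanh x_{2}=2v/(1-u)-1$, imposes $a_{i}+b_{i}-1=\alpha_{i}=\beta_{i}$ so that \eqref{A:3} collapses the left-hand side, and then regroups the Beta and Pochhammer factors into $\Theta_{2}$ and $\Lambda_{2}$ exactly as you describe. Your bookkeeping of the Gamma shifts and the conjugation exchanging $(\mathbf{a},\mathbf{n})\leftrightarrow(\mathbf{b},\mathbf{m})$ matches the paper's computation.
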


\begin{remark}
The weight function of this orthogonality relation is positive when the three equalities hold simultaneously: $a_{1}=b_{1}$,  $a_{2}=b_{2}$, and $a_{3}=b_{3}$.
\end{remark}

\subsection{The class of special functions using Fourier transform of $r$-dimensional polynomials on the $r$-simplex}

By applying the method in the cases $r=1$ and $r=2$, if we substitute \eqref{18} and \eqref{A} in the Parseval's identity \eqref{19}, after the necessary calculations we obtain the following theorem.
\begin{theorem}
Let be $\boldsymbol{n}:=\left(  n_{1},n_{2},\dots,n_{r}\right)  $, $\boldsymbol{m}:=\left(  m_{1},m_{2},\dots,m_{r}\right)  $, $\left \vert \mathbf{n}^{j}\right \vert =n_{j}+n_{j+1}+\dots+n_{r}$ and $\boldsymbol{x}
:=\left(  x_{1},x_{2},\dots,x_{r}\right)  $ for $\boldsymbol{x}\in \mathbb{R}^{r}$. In here, $\mathbf{a}:=\left(  a_{1},a_{2},\dots,a_{r+1}\right)  $ and $\mathbf{b}:=\left(  b_{1},b_{2},\dots,b_{r+1}\right).  $ Let
\begin{align*}
\mathbf{a}^{j}  & =\left(  a_{j},\dots,a_{r+1}\right)  ,\  \  \  \ 1\leq j\leq r+1,\\
\mathbf{b}^{j}  & =\left(  b_{j},\dots,b_{r+1}\right)   ,\  \  \  \ 1\leq j\leq r+1,
\end{align*}
so $\left \vert \mathbf{a}^{j}\right \vert =a_{j}+a_{j+1}+\cdots+a_{r+1}$ and $\left \vert \mathbf{b}^{j}\right \vert =b_{j}+b_{j+1}+\cdots+b_{r+1}$. The following equality is satisfied:%
\begin{multline*}
 \int \limits_{-\infty}^{\infty}\cdots\int \limits_{-\infty}^{\infty}W_{r}\left(\boldsymbol{x},\mathbf{a},\mathbf{b}\right)  ~_{r}S_{\mathbf{n}}\left(  i\boldsymbol{x};\mathbf{a},\mathbf{b}\right) ~ _{r}S_{\mathbf{m}}\left(  -i\boldsymbol{x};\mathbf{b},\mathbf{a}\right) \mathbf{dx}\\
=2^{2r}\pi^{r}h_{\mathbf{n}}^{\left(  \mathbf{a+b-1}\right)  } \prod \limits_{j=1}^{r}  \frac{\left(  n_{j}!\right)  ^{2} \Gamma \left(  \left \vert \mathbf{n}%
^{j+1}\right \vert +\left \vert \mathbf{a}^{j}\right \vert \right)\Gamma \left(  \left \vert \mathbf{n}^{j+1}\right \vert +\left \vert \mathbf{b}^{j}\right \vert \right)  }{\left(  2\left \vert \mathbf{n}%
^{j+1}\right \vert  +\left \vert \mathbf{a}%
^{j+1}\right \vert+\left \vert \mathbf{b}%
^{j+1}\right \vert\right)_{n_{j}}^{2} }\delta_{n_{j},m_{j}}
\end{multline*}
where
\begin{multline*}
W_{r}\left(  \boldsymbol{x},\mathbf{a},\mathbf{b}\right)  :=W_{r}\left(  x_{1},\dots,x_{r};a_{1},\dots,a_{r+1},b_{1},\dots,b_{r+1}\right)  \\
 =\prod \limits_{j=1}^{r}\left \{  \Gamma \left(  a_{j}-\frac{ix_{j}}{2}\right)  \Gamma \left(  \left \vert \mathbf{a}^{j+1}\right \vert+\frac{ix_{j}}{2}\right) \Gamma \left(  b_{j}+\frac{ix_{j}}{2}\right)  \Gamma \left(\left \vert \mathbf{b}^{j+1}\right \vert -\frac{ix_{j}}{2}\right) \right \}
\end{multline*}
for $a_{j}, b_{j}>0$ ; $j=1,2,\dots,r+1$ and
\begin{multline}\label{Func1}
_{r}S_{\mathbf{n}}\left(  \boldsymbol{x};\mathbf{a},\mathbf{b}\right) =\prod \limits_{j=1}^{r}\left \{  \left(  \left \vert \mathbf{a}^{j+1}\right \vert +\frac{x_{j}}{2}\right)  _{\left \vert \mathbf{n}^{j+1}\right \vert }\right.  \\
 \left.  \times \hyper{3}{2}{-n_{j},  n_{j}+2\left \vert \mathbf{n}^{j+1}\right \vert +\left \vert \mathbf{a}^{j}\right \vert +\left \vert \mathbf{b}^{j}\right \vert -1  ,\ \left \vert \mathbf{n}^{j+1}\right \vert +\left \vert \mathbf{a}^{j+1}\right \vert+\frac{x_{j}}{2}}
 {2\left \vert \mathbf{n}^{j+1}\right \vert +\left \vert \mathbf{a}^{j+1}\right \vert +\left \vert \mathbf{b}^{j+1}\right \vert,\ \left \vert \mathbf{n}^{j+1}\right \vert +\left \vert \mathbf{a}^{j}\right \vert }
 {1}   \right \}
\end{multline}

or, in terms of Hahn polynomials \eqref{hahn}

\begin{multline*}
_{r}S_{\mathbf{n}}\left(  \boldsymbol{x};\mathbf{a},\mathbf{b}\right) =\prod \limits_{j=1}^{r}\left \{ \frac{n_{j}!i^{-n_{j}}}{\left( \left \vert \mathbf{n}^{j+1}\right \vert
+\left \vert \mathbf{a}^{j}\right \vert \right) _{n_{j}}\left( 2\left \vert
\mathbf{n}^{j+1}\right \vert +\left \vert \mathbf{a}^{j+1}\right \vert
+\left \vert \mathbf{b}^{j+1}\right \vert \right)_{n_{j}}} \right.  \\
\times \left. \left.  \left(  \left \vert \mathbf{a}^{j+1}\right \vert +\frac{x_{j}}{2}\right)  _{\left \vert \mathbf{n}^{j+1}\right \vert }\right.  p_{n_{j}}\left( \frac{-ix_{j}}{2};\left \vert \mathbf{n}^{j+1}\right \vert +\left \vert \mathbf{a}^{j+1}\right \vert ,b_{j},\left \vert
\mathbf{n}^{j+1}\right \vert +\left \vert \mathbf{b}^{j+1}\right \vert,a_{j}\right) \right. \}
\end{multline*}
for $r\geq1$.
\end{theorem}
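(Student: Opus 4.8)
The plan is to reproduce, for general $r$, the route already carried out above for $r=1$ and $r=2$: apply Parseval's identity \eqref{19} to a suitable pair of functions of type \eqref{18}, evaluate the physical-side integral by a change of variables that lands on the simplex $T^{r}$ and invokes the orthogonality \eqref{A:3}, and evaluate the spectral-side integral through the closed-form Fourier transform \eqref{A}. Concretely, I would take $g=g_{r}(\boldsymbol{x};\mathbf{n},\mathbf{a},\boldsymbol{\alpha})$ and $h=g_{r}(\boldsymbol{x};\mathbf{m},\mathbf{b},\boldsymbol{\beta})$, keeping $\boldsymbol{\alpha},\boldsymbol{\beta}$ free for the moment, so that \eqref{19} reads $\int_{\mathbb{R}^{r}}g_{r}\,\overline{g_{r}}\,\mathbf{dx}=(2\pi)^{-r}\int_{\mathbb{R}^{r}}\mathcal{F}(g_{r})\,\overline{\mathcal{F}(g_{r})}\,d\boldsymbol{\xi}$, and then run the two sides in parallel. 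The whole argument is phrased as an induction on $r$ using \eqref{A}, so the subsections for $r=1,2$ serve as base cases and no Fourier transform has to be re-derived.

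For the left-hand side I would first note that the prefactors in \eqref{18} multiply to $\prod_{j=1}^{r}(1+\tanh x_{j})^{a_{j}+b_{j}}(1-\tanh x_{j})^{|\mathbf{a}^{j+1}|+|\mathbf{b}^{j+1}|}$, and then apply the nested substitution generalising $\tanh x_{1}=2u_{1}-1$, $\tanh x_{2}=\tfrac{2u_{2}}{1-u_{1}}-1,\dots$, in which the $j$-th variable is rescaled by $1-|\boldsymbol{u}_{j-1}|$. This maps $\mathbb{R}^{r}$ bijectively onto $T^{r}$ and, together with the corresponding Jacobian factors, turns the integrand into the simplex weight $W_{\boldsymbol{\gamma}}$ with $\gamma_{j}=a_{j}+b_{j}-1$, up to an explicit power of $2$ and the overall $\pi^{r}$. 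Imposing $\alpha_{j}=\beta_{j}=a_{j}+b_{j}-1$ forces $P^{(\boldsymbol{\alpha})}_{\mathbf{n}}$ and $P^{(\boldsymbol{\beta})}_{\mathbf{m}}$ to share parameters, so \eqref{A:3} collapses the left-hand side to $2^{2r}\pi^{r}\,h_{\mathbf{n}}^{\left(\mathbf{a+b-1}\right)}\prod_{j}\delta_{n_{j},m_{j}}$ up to the bookkeeping constants recorded above.

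For the right-hand side I would substitute \eqref{A}, which factorises $\mathcal{F}(g_{r})$ as a constant times $\prod_{j}\Lambda_{j}^{r}$, and split each $\Lambda_{j}^{r}$ into its Beta factor $B(a_{j}-\tfrac{i\xi_{j}}{2},\,|\mathbf{n}^{j+1}|+|\mathbf{a}^{j+1}|+\tfrac{i\xi_{j}}{2})$ and its $_{3}F_{2}$. The pivotal identity is $\Gamma(|\mathbf{a}^{j+1}|+|\mathbf{n}^{j+1}|+\tfrac{i\xi_{j}}{2})=\Gamma(|\mathbf{a}^{j+1}|+\tfrac{i\xi_{j}}{2})\,(|\mathbf{a}^{j+1}|+\tfrac{i\xi_{j}}{2})_{|\mathbf{n}^{j+1}|}$: writing each Beta as a ratio of Gammas and peeling off this Pochhammer shift, the unshifted Gammas coming from $g$ and from $\overline{h}$ assemble (after setting $\xi_{j}=x_{j}$ and using $\overline{\Gamma(z)}=\Gamma(\bar z)$) into exactly the four factors of $W_{r}(\boldsymbol{x},\mathbf{a},\mathbf{b})$, the Beta denominators $\Gamma(|\mathbf{n}^{j+1}|+|\mathbf{a}^{j}|)$ and $\Gamma(|\mathbf{m}^{j+1}|+|\mathbf{b}^{j}|)$ drop out as constants, and the shift factors $(|\mathbf{a}^{j+1}|+\tfrac{ix_{j}}{2})_{|\mathbf{n}^{j+1}|}$, $(|\mathbf{b}^{j+1}|-\tfrac{ix_{j}}{2})_{|\mathbf{m}^{j+1}|}$ are precisely the prefactors in the definition \eqref{Func1} of $_{r}S$. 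A short check that $\alpha_{j}=a_{j}+b_{j}-1$ turns every parameter of the $_{3}F_{2}$ in $\Lambda_{j}^{r}$ into the corresponding parameter of \eqref{Func1} then identifies the remaining pieces with $_{r}S_{\mathbf{n}}(i\boldsymbol{x};\mathbf{a},\mathbf{b})$ and $_{r}S_{\mathbf{m}}(-i\boldsymbol{x};\mathbf{b},\mathbf{a})$.

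Finally I would equate the two evaluations, cancel the common $2^{2r}\pi^{r}$ and the per-index normalisers $\tfrac{(2|\mathbf{n}^{j+1}|+|\boldsymbol{\alpha}^{j+1}|+r-j+1)_{n_{j}}}{n_{j}!}$ carried along from \eqref{A} (which become $(2|\mathbf{n}^{j+1}|+|\mathbf{a}^{j+1}|+|\mathbf{b}^{j+1}|)_{n_{j}}$ after the parameter substitution), and move the Beta-denominator Gammas to the numerator to read off the stated constant; the $\Gamma(|\mathbf{n}^{j+1}|+|\mathbf{a}^{j}|)\,\Gamma(|\mathbf{n}^{j+1}|+|\mathbf{b}^{j}|)$ in the result are exactly those denominators inverted. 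I expect the main obstacle to be the constant bookkeeping in the two middle steps, namely verifying that the nested tanh-to-simplex substitution reproduces $W_{\boldsymbol{\gamma}}$ with the precise power of $2$ and Jacobian, and that the Beta-to-Gamma splitting distributes all arguments so that $W_{r}$ and the two copies of $_{r}S$ emerge with exactly the parameters and the $x_{j}\mapsto ix_{j}$, $x_{j}\mapsto -ix_{j}$ specialisations demanded by the statement; everything beyond this is routine Pochhammer and Gamma-function algebra.
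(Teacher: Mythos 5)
Your proposal follows essentially the same route as the paper: the paper's own proof of this theorem for general $r$ consists precisely of the instruction to substitute \eqref{18} and \eqref{A} into Parseval's identity \eqref{19} and repeat the computations carried out explicitly for $r=1$ and $r=2$, and your outline (the nested tanh-to-simplex substitution plus the orthogonality \eqref{A:3} with $\alpha_j=\beta_j=a_j+b_j-1$ on the physical side, and the Beta-to-Gamma splitting with the Pochhammer shift identity assembling $W_r$ and the two copies of $_{r}S$ on the spectral side) is exactly what those ``necessary calculations'' amount to. The approach and all key identifications match; no discrepancy.
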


\begin{remark}
The weight function of this orthogonality relation is positive when all equalities hold simultaneously: $a_{j}=b_{j}$ for $j=1,2,\dots,r+1$.
\end{remark}

\section{Recurrence relations for the functions $_{r}S_{\mathbf{n}}\left( \mathbf{x};\mathbf{a},\mathbf{b}\right) $}\label{sec:fs}

In this section, we derive several recurrence relations for $_{r}S_{\mathbf{n}}\left( \mathbf{x};\mathbf{a},\mathbf{b}\right) $
given by \eqref{Func1}. In doing so, we first recall the well-known relations for
hypergeometric function $_{3}F_{2}$ $\left(
m_{1},m_{2},m_{3};s_{1},s_{2};z\right) $:

\begin{lemma}
For $\left \vert z\right \vert <1$, the hypergeometric function $\ _{3}F_{2}$
satisfies the following known recurrence relations:

\emph{(i)}%
\begin{eqnarray}
\left( z-1\right) \ _{3}F_{2}\left( m_{1}+1,m_{2},m_{3};s_{1},s_{2};z\right)
=\left( B_{1}+C_{1}z\right) \ _{3}F_{2}\left(
m_{1},m_{2},m_{3};s_{1},s_{2};z\right)  &&  \notag \\
+\left( B_{2}+C_{2}z\right) \ _{3}F_{2}\left(
m_{1}-1,m_{2},m_{3};s_{1},s_{2};z\right) +B_{3}\ _{3}F_{2}\left(
m_{1}-2,m_{2},m_{3};s_{1},s_{2};z\right)  &&  \label{H1}
\end{eqnarray}
where%
\begin{gather*}
B_{1} =\frac{s_{1}+s_{2}+1-3m_{1}}{m_{1}},\ B_{2}=\frac{s_{1}s_{2}+\left(
m_{1}-1\right) \left( 3m_{1}-2\left( s_{1}+s_{2}+1\right) \right) }{\left(
m_{1}-1\right) _{2}}, \\
C_{1} =\frac{2m_{1}-m_{2}-m_{3}-1}{m_{1}},\ C_{2}=-\frac{\left(
m_{1}-1\right) \left( m_{1}-m_{2}-m_{3}-1\right) +m_{2}m_{3}}{\left(
m_{1}-1\right) _{2}},
\end{gather*}
and
\begin{equation*}
B_{3}=-\frac{\left( m_{1}-s_{1}-1\right) \left( m_{1}-s_{2}-1\right) }{\left( m_{1}-1\right) _{2}},
\end{equation*}

\emph{(ii)}%
\begin{multline}\label{H2}
\left( z-1\right) \ _{3}F_{2}\left( m_{1},m_{2},m_{3};s_{1}-1,s_{2};z\right)
=\left( B_{1}+C_{1}z\right) \ _{3}F_{2}\left(
m_{1},m_{2},m_{3};s_{1},s_{2};z\right)  \\
+\left( B_{2}+C_{2}z\right) \ _{3}F_{2}\left(
m_{1},m_{2},m_{3};s_{1}+1,s_{2};z\right) +C_{3}z\ _{3}F_{2}\left(
m_{1},m_{2},m_{3};s_{1}+2,s_{2};z\right)
\end{multline}
where%
\begin{gather*}
B_{1} =\frac{s_{2}-2s_{1}}{s_{1}-1},\quad B_{2}=\frac{s_{1}-s_{2}+1}{s_{1}-1}%
,\quad C_{1}=\frac{3s_{1}-m_{1}-m_{2}-m_{3}}{s_{1}-1} , \\
C_{2} =\frac{\left( 2s_{1}+1\right) \left( m_{1}+m_{2}+m_{3}\right)
-3\left( s_{1}-1\right) \left( s_{1}+2\right)
-7-m_{1}m_{2}-m_{1}m_{3}-m_{2}m_{3}}{\left( s_{1}-1\right) _{2}}
\end{gather*}%
and%
\begin{equation*}
C_{3}=\frac{\left( s_{1}-m_{1}+1\right) \left( s_{1}-m_{2}+1\right) \left(
s_{1}-m_{3}+1\right) }{\left( s_{1}-1\right) _{3}},
\end{equation*}

\emph{(iii)}%
\begin{multline}\label{H3}
s_{1}\ _{3}F_{2}\left( m_{1},m_{2},m_{3};s_{1},s_{2};z\right) +\left(
m_{1}-s_{1}\right) \ _{3}F_{2}\left( m_{1},m_{2},m_{3};s_{1}+1,s_{2};z\right)\\
=m_{1}\ _{3}F_{2}\left( m_{1}+1,m_{2},m_{3};s_{1}+1,s_{2};z\right) .
\end{multline}
\end{lemma}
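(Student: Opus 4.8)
The plan is to prove all three identities by the same elementary device: expand each $_3F_2$ occurring in \eqref{H1}--\eqref{H3} as a power series in $z$ and compare coefficients of $z^k$ for every $k\ge 0$. Write
\[
\,_3F_2(m_1,m_2,m_3;s_1,s_2;z)=\sum_{k\ge 0}a_k\,z^k,\qquad a_k=\frac{(m_1)_k\,(m_2)_k\,(m_3)_k}{(s_1)_k\,(s_2)_k\,k!}.
\]
In each relation only one top parameter ($m_1$) or one bottom parameter ($s_1$) is shifted, so the parameters that are never moved are spectators; the common factor they contribute to $a_k$ may be divided out, and each identity collapses to a relation among Pochhammer ratios in the single shifted parameter. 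Since $|z|<1$ guarantees absolute convergence, the comparison is legitimate termwise.

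I would dispose of (iii) first, as it is cleanest and sets the template. Comparing coefficients of $z^k$ in \eqref{H3} and cancelling the spectator factor $(m_2)_k(m_3)_k/((s_2)_k\,k!)$, the claim becomes
\[
s_1\,\frac{(m_1)_k}{(s_1)_k}+(m_1-s_1)\,\frac{(m_1)_k}{(s_1+1)_k}=m_1\,\frac{(m_1+1)_k}{(s_1+1)_k}.
\]
Using $(s_1)_k=(s_1+1)_k\,s_1/(s_1+k)$ and $(m_1)_{k+1}=m_1(m_1+1)_k$, the left-hand side collapses to $\frac{(m_1)_k}{(s_1+1)_k}\big(s_1+k+m_1-s_1\big)=\frac{(m_1)_{k+1}}{(s_1+1)_k}=m_1\frac{(m_1+1)_k}{(s_1+1)_k}$, which is exactly the right-hand side. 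No hypothesis beyond $|z|<1$ is needed.

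For (i) and (ii) the same philosophy applies, but the factor $(z-1)$ in \eqref{H1} and the term $C_3\,z\,{}_3F_2(\dots;s_1+2,\dots)$ in \eqref{H2} couple the coefficient of $z^k$ with that of $z^{k-1}$. Concretely, for (i) the coefficient of $z^k$ on the left is $a_{k-1}(m_1+1)-a_k(m_1+1)$ (where I suppress the fixed parameters in the notation $a_k(m_1)$), while on the right it is $B_1a_k(m_1)+C_1a_{k-1}(m_1)+B_2a_k(m_1-1)+C_2a_{k-1}(m_1-1)+B_3a_k(m_1-2)$. Dividing by $a_k(m_1)$ and inserting the elementary ratios
\[
\frac{a_k(m_1+1)}{a_k(m_1)}=\frac{m_1+k}{m_1},\qquad \frac{a_{k-1}(m_1)}{a_k(m_1)}=\frac{k\,(s_1+k-1)(s_2+k-1)}{(m_1+k-1)(m_2+k-1)(m_3+k-1)},
\]
together with the analogous ratios for the $m_1-1$ and $m_1-2$ shifts, turns every term into a rational function of $k$; clearing the common denominator converts the required identity into a polynomial identity in $k$ with $m_1,m_2,m_3,s_1,s_2$ as parameters. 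Substituting the prescribed $B_1,B_2,B_3,C_1,C_2$ and checking this polynomial identity is then a finite verification. Relation (ii) is handled identically, now shifting $s_1$ by $-1,0,+1,+2$ and using the ratios $(s_1-1)_k/(s_1)_k$, $(s_1+1)_k/(s_1)_k$, $(s_1+2)_k/(s_1)_k$.

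The main obstacle is purely computational, and it lies in (i) and (ii): because the $(z-1)$ and $z$ factors mix two consecutive powers, the reduced polynomial identity in $k$ carries several terms of comparable degree, and the crux is confirming that the specific rational coefficients $B_i,C_i$ stated in the lemma are precisely those that make it vanish. I would carry out this reduction after clearing denominators, checking the resulting polynomial identity in $k$ by hand or with a computer algebra system. Conceptually one could instead derive \eqref{H1} and \eqref{H2} by composing the fundamental contiguous relations of $_3F_2$ (of which \eqref{H3} is the simplest instance), but the direct coefficient comparison above is the most transparent and self-contained route.
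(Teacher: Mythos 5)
The paper itself offers no proof of this lemma: it is quoted as a list of ``known recurrence relations'' for $_3F_2$ and used as a black box, so any comparison is between your argument and nothing. Your overall strategy --- expand every $_3F_2$ in powers of $z$, cancel the spectator Pochhammer factors, and compare coefficients of $z^k$ --- is sound, and you execute it completely for part (iii): the computation $s_1/(s_1)_k=(s_1+k)/(s_1+1)_k$ together with $(m_1)_k(m_1+k)=m_1(m_1+1)_k$ does give the contiguous relation \eqref{H3} exactly as claimed. Your Pochhammer ratios for the $m_1$- and $s_1$-shifts are also correct, and the reduction of \eqref{H1} and \eqref{H2} to a polynomial identity in $k$ is the right move (the case $k=0$ of your scheme already forces $B_1+B_2+B_3=-1$ in (i) and $B_1+B_2=-1$ in (ii), both of which the stated coefficients satisfy).

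The gap is that for (i) and (ii) you stop exactly where the lemma begins. The entire content of those two parts is that the \emph{specific} rational functions $B_1,B_2,B_3,C_1,C_2,C_3$ displayed in the statement make the coefficient identity vanish for every $k$; you reduce the claim to ``a finite verification'' and then defer it to hand computation or a computer algebra system without performing it. As written, nothing in the proposal certifies that, say, $C_2=-\frac{(m_1-1)(m_1-m_2-m_3-1)+m_2m_3}{(m_1-1)_2}$ rather than some other expression, so parts (i) and (ii) are not actually proved. To close this you must either carry out the polynomial identity check (a degree-$\le 3$ identity in $k$ once denominators are cleared, so genuinely finite), or take the alternative route you mention in passing: compose the elementary contiguous relations of $_3F_2$ (shifting one parameter by $\pm 1$ at a time, each provable by the same one-line coefficient computation as your (iii)) to eliminate the intermediate functions and read off the coefficients; that derivation produces the $B_i,C_i$ rather than merely verifying them.
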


From the relation
\begin{multline*}
S_{n}\left( x;a_{1},a_{2},b_{1},b_{2}\right)
:=  _{1}S_{n_{1}}\left( x_{1};a_{1},a_{2},b_{1},b_{2}\right) \\
=\hyper{3}{2}{-n_{1} ,\
a_{2}+x_{1} /2,\ n_{1} +a_{1}+a_{2}+b_{1}+b_{2}-1}{a_{2}+b_{2},\ a_{1}+a_{2}}{1}
\end{multline*}
given by \eqref{1boyut}, we first give the following lemmas to derive recurrence relations for the functions $_{r}S_{\mathbf{n}}\left( \mathbf{x};\mathbf{a},\mathbf{b}\right) $ for $r>1$.

\begin{lemma}
The family of the special function$\ _{1}S_{n_{1}}\left(
x_{1};a_{1},a_{2},b_{1},b_{2}\right) $ satisfies the recurrence relation%
\begin{eqnarray}
\left( B_{1}+C_{1}\right) \ _{1}S_{n_{1}}\left(
x_{1};a_{1},a_{2},b_{1},b_{2}\right) +\left( B_{2}+C_{2}\right) \
_{1}S_{n_{1}+1}\left( x_{1};a_{1},a_{2},b_{1}-1,b_{2}\right)  &&  \label{*1}
\\
+B_{3}\ _{1}S_{n_{1}+2}\left( x_{1};a_{1},a_{2},b_{1}-2,b_{2}\right) =0, &&
\notag
\end{eqnarray}%
where
\begin{eqnarray*}
B_{1}+C_{1} &=&\frac{b_{1}-1+\frac{x_{1}}{2}}{n_{1}}~~,~~B_{3}=-\frac{\left(
n_{1}+a_{2}+b_{2}+1\right) \left( n_{1}+a_{1}+a_{2}+1\right) }{\left(
n_{1}\right) _{2}}, \\
B_{2}+C_{2} &=&\frac{\left( a_{2}+b_{2}\right) \left( a_{1}+a_{2}\right)
-\left( n_{1}+a_{1}+a_{2}+b_{1}+b_{2}-1\right) \left( a_{2}+\frac{x_{1}}{2}%
\right) }{\left( n_{1}\right) _{2}} \\
&&+\frac{n_{1}+a_{1}+2a_{2}-b_{1}+b_{2}+2-\frac{x_{1}}{2}}{n_{1}}.
\end{eqnarray*}
\end{lemma}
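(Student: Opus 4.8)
The plan is to read the recurrence \eqref{*1} off directly from relation (i) of the preceding Lemma, equation \eqref{H1}, specialized to the boundary value $z=1$. The entry point is the observation that, by \eqref{1boyut}, the function $_1S_{n_1}(x_1;a_1,a_2,b_1,b_2)$ is exactly the terminating series ${}_3F_2(m_1,m_2,m_3;s_1,s_2;1)$ under the identification
$$m_1=-n_1,\quad m_2=a_2+\frac{x_1}{2},\quad m_3=n_1+a_1+a_2+b_1+b_2-1,\quad s_1=a_2+b_2,\quad s_2=a_1+a_2.$$
Thus everything in \eqref{*1} lives inside the single ${}_3F_2$ family to which \eqref{H1} applies.

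The crucial structural remark I would record first is a parameter-shift compatibility: if one simultaneously replaces $n_1$ by $n_1+1$ and $b_1$ by $b_1-1$, then $m_1$ drops by one while $m_2$, $m_3$, $s_1$, $s_2$ are all left invariant — the increment of $n_1$ inside $m_3$ is cancelled exactly by the decrement of $b_1$, and neither $s_1$, $s_2$ nor $m_2$ involves $b_1$. Consequently the three functions occurring in \eqref{*1}, namely $_1S_{n_1}(b_1)$, $_1S_{n_1+1}(b_1-1)$ and $_1S_{n_1+2}(b_1-2)$, are precisely the hypergeometric functions with first upper parameter $m_1$, $m_1-1$, $m_1-2$ and all remaining parameters frozen. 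This is exactly the configuration appearing on the right-hand side of \eqref{H1}.

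The second step is then immediate: evaluate \eqref{H1} at $z=1$. The left-hand side carries the prefactor $(z-1)$ and so vanishes, leaving
$$(B_1+C_1)\,{}_3F_2(m_1;\cdots;1)+(B_2+C_2)\,{}_3F_2(m_1-1;\cdots;1)+B_3\,{}_3F_2(m_1-2;\cdots;1)=0,$$
in which the coefficients are those of \eqref{H1} evaluated with the base parameter $m_1$, i.e.\ with the parameters of the $_1S_{n_1}(b_1)$ term. Translating each ${}_3F_2$ back through the identification above reproduces the left-hand side of \eqref{*1}.

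The only genuine computation left is to insert the explicit parameter values into the formulas for $B_1,C_1,B_2,C_2,B_3$ from the Lemma and simplify. For $B_1+C_1$ I would exploit the telescoping $B_1+C_1=(s_1+s_2-m_1-m_2-m_3)/m_1$, which collapses to $(b_1-1+\frac{x_1}{2})/n_1$. For $B_3$ the factors $m_1-s_1-1$ and $m_1-s_2-1$ become $-(n_1+a_2+b_2+1)$ and $-(n_1+a_1+a_2+1)$ while $(m_1-1)_2=(n_1)_2$, giving the stated value at once. The coefficient $B_2+C_2$ is the genuinely awkward one: here I would split it as $(s_1s_2-m_2m_3)/(m_1-1)_2$ together with the residual $(2m_1+m_2+m_3-2s_1-2s_2-1)/m_1$, identifying the first piece as the quoted $\bigl((a_2+b_2)(a_1+a_2)-(n_1+a_1+a_2+b_1+b_2-1)(a_2+\frac{x_1}{2})\bigr)/(n_1)_2$ and reducing the second to $(n_1+a_1+2a_2-b_1+b_2+2-\frac{x_1}{2})/n_1$. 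The main obstacle is purely bookkeeping: keeping the Pochhammer symbols and the several parameter substitutions consistent through the simplification of $B_2+C_2$. There is no structural difficulty, since the entire content of the lemma is simply \emph{relation \eqref{H1} read at $z=1$}.
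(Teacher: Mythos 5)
Your proposal is correct and follows the paper's own proof exactly: the paper likewise obtains \eqref{*1} by substituting $m_{1}\to -n_{1}$, $s_{1}\to a_{2}+b_{2}$, $s_{2}\to a_{1}+a_{2}$ (with $a_{2}+\frac{x_{1}}{2}$ and $n_{1}+a_{1}+a_{2}+b_{1}+b_{2}-1$ as the remaining upper parameters) and $z\to 1$ into \eqref{H1}, the swap of $m_{2}$ and $m_{3}$ relative to the paper being immaterial since the coefficients of \eqref{H1} are symmetric in these two parameters. Your explicit verification of the parameter-shift compatibility and of the three coefficients is accurate and merely fills in the bookkeeping the paper leaves implicit.
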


\begin{proof}
Substituting $m_{1}\rightarrow -n_{1}$, $m_{2}\rightarrow
n_{1}+a_{1}+a_{2}+b_{1}+b_{2}-1$, $m_{3}\rightarrow a_{2}+\frac{x_{1}}{2}$, $%
s_{1}\rightarrow a_{2}+b_{2}$, $s_{2}\rightarrow a_{1}+a_{2}$ and $%
z\rightarrow 1$ in the relation \eqref{H1}, we complete the proof.
\end{proof}

\begin{lemma}
The family of the special function$\ _{1}S_{n_{1}}\left(
x_{1};a_{1},a_{2},b_{1},b_{2}\right) $ satisfies the recurrence relation%
\begin{eqnarray}
\left( B_{1}+C_{1}\right) \ _{1}S_{n_{1}}\left(
x_{1};a_{1},a_{2},b_{1},b_{2}\right) +\left( B_{2}+C_{2}\right) \
_{1}S_{n_{1}}\left( x_{1};a_{1},a_{2},b_{1}-1,b_{2}\right)  &&  \label{*2} \\
+B_{3}\ _{1}S_{n_{1}}\left( x_{1};a_{1},a_{2},b_{1}-2,b_{2}\right) =0, &&
\notag
\end{eqnarray}%
where
\begin{eqnarray*}
B_{1}+C_{1} &=&-\frac{b_{1}-1+\frac{x_{1}}{2}}{%
n_{1}+a_{1}+a_{2}+b_{1}+b_{2}-1}, \\
B_{2}+C_{2} &=&\frac{\left( a_{2}+b_{2}\right) \left( a_{1}+a_{2}\right)
+n_{1}\left( a_{2}+\frac{x_{1}}{2}\right) }{\left(
n_{1}+a_{1}+a_{2}+b_{1}+b_{2}-2\right) _{2}}+\frac{n_{1}-a_{2}+2b_{1}-3+%
\frac{x_{1}}{2}}{n_{1}+a_{1}+a_{2}+b_{1}+b_{2}-1}, \\
B_{3} &=&-\frac{\left( n_{1}+a_{1}+b_{1}-2\right) \left(
n_{1}+b_{1}+b_{2}-2\right) }{\left( n_{1}+a_{1}+a_{2}+b_{1}+b_{2}-2\right)
_{2}}.
\end{eqnarray*}
\end{lemma}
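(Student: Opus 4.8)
The plan is to derive \eqref{*2} from the contiguous relation \eqref{H2} by the same parameter-substitution strategy used in the proof of the previous lemma, rather than from \eqref{H1}. Recall that
\[
_{1}S_{n_{1}}\left( x_{1};a_{1},a_{2},b_{1},b_{2}\right)
=\hyper{3}{2}{-n_{1},\ a_{2}+x_{1}/2,\ n_{1}+a_{1}+a_{2}+b_{1}+b_{2}-1}{a_{2}+b_{2},\ a_{1}+a_{2}}{1},
\]
so that the denominator parameter $s_{1}=a_{2}+b_{2}$ carries the dependence on $b_{1}$ only through the \emph{numerator} entry $m_{2}=n_{1}+a_{1}+a_{2}+b_{1}+b_{2}-1$. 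Since \eqref{*2} shifts $b_{1}\mapsto b_{1}-1,b_{1}-2$ while keeping $n_{1}$ fixed, I must choose the substitution in \eqref{H2} so that a unit shift in $b_{1}$ corresponds to a \emph{simultaneous} shift of the parameter being moved in \eqref{H2}. The first step, then, is to identify the correct dictionary: set $m_{1}\rightarrow a_{2}+x_{1}/2$ (or another fixed numerator slot), $m_{2},m_{3}$ to the remaining numerator entries, $s_{1},s_{2}$ to the denominators, and $z\rightarrow 1$, in exactly the way that makes the three $_{3}F_{2}$'s appearing in \eqref{H2} coincide with $_{1}S_{n_{1}}$ at the arguments $b_{1},b_{1}-1,b_{1}-2$.

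The key subtlety is which entry of \eqref{H2} tracks $b_{1}$. In \eqref{H2} the shifts are on $s_{1}$, the first \emph{lower} parameter, via $s_{1}-1,s_{1},s_{1}+1,s_{1}+2$; but in $_{1}S_{n_{1}}$ a change in $b_{1}$ moves a \emph{numerator} parameter $m_{2}$, not a denominator. The resolution I expect is to exploit the fact that for the terminating series ($m_{1}=-n_{1}$) the relevant contiguity can be rephrased: under $z\to 1$ many of the $z$-dependent coefficients collapse, and a shift in the numerator entry $n_{1}+a_{1}+a_{2}+b_{1}+b_{2}-1$ (which equals $m_{2}$) is what \eqref{H2} actually produces once one checks the indexing. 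Concretely, I would map $s_{1}\rightarrow n_{1}+a_{1}+a_{2}+b_{1}+b_{2}-1$ appropriately so that $s_{1}-1,s_{1},s_{1}+1$ correspond to the three displayed values of $b_{1}$; the remaining parameters $m_{1},m_{2},m_{3},s_{2}$ are then read off to match the fixed entries $-n_{1}$, $a_{2}+x_{1}/2$, $a_{2}+b_{2}$, $a_{1}+a_{2}$. After fixing this dictionary, the coefficients $B_{1}+C_{1}$, $B_{2}+C_{2}$, and $B_{3}$ in \eqref{*2} are obtained by substituting these values into the formulas for $B_{1},C_{1},B_{2},C_{2},C_{3}$ (and the product terms $(s_{1}-1)_{2}$, $(s_{1}-1)_{3}$) given with \eqref{H2}, then setting $z=1$ so that each pair $B_{i}+C_{i}z$ becomes $B_{i}+C_{i}$ and the pure-$z$ term $C_{3}z$ becomes $C_{3}$.

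The main obstacle I anticipate is purely bookkeeping but genuinely delicate: verifying that the substitution identifies the \emph{correct} three shifted hypergeometric functions with the three $_{1}S$ terms in \eqref{*2}, and then confirming that the resulting closed forms for $B_{1}+C_{1}$, $B_{2}+C_{2}$, $B_{3}$ simplify to exactly the stated expressions (note the shifted Pochhammer denominators such as $\left(n_{1}+a_{1}+a_{2}+b_{1}+b_{2}-2\right)_{2}$, which signal that $s_{1}$ has been set to the $b_{1}$-dependent numerator value). Once the dictionary is pinned down and $z\to 1$ is taken, the verification reduces to elementary algebra of Pochhammer symbols, and the relation \eqref{*2} follows. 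I would therefore state the proof as: \emph{substitute the indicated values of $m_{1},m_{2},m_{3},s_{1},s_{2}$ and $z\rightarrow 1$ in \eqref{H2}, and simplify.}
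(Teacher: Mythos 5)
Your choice of contiguous relation is the wrong one, and the ``resolution'' you propose for the mismatch you yourself noticed does not exist. The relation \eqref{H2} shifts the \emph{denominator} parameter $s_{1}$ of ${}_{3}F_{2}(m_{1},m_{2},m_{3};s_{1},s_{2};z)$, and in $\ _{1}S_{n_{1}}(x_{1};a_{1},a_{2},b_{1},b_{2})$ the only $b_{1}$-dependent entry, $n_{1}+a_{1}+a_{2}+b_{1}+b_{2}-1$, sits in the \emph{numerator}. You cannot ``map $s_{1}\rightarrow n_{1}+a_{1}+a_{2}+b_{1}+b_{2}-1$'': doing so produces ${}_{3}F_{2}$'s with that quantity as a lower parameter, which are simply not the functions $\ _{1}S_{n_{1}}(x_{1};a_{1},a_{2},b_{1}-k,b_{2})$, and no ``re-indexing'' of \eqref{H2} turns a lower-parameter shift into an upper-parameter shift. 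The structure of the stated coefficients already rules out \eqref{H2}: the third coefficient in \eqref{*2} is a product of \emph{two} linear factors over a Pochhammer of length $2$, matching $B_{3}=-\frac{(m_{1}-s_{1}-1)(m_{1}-s_{2}-1)}{(m_{1}-1)_{2}}$ from \eqref{H1}, whereas $C_{3}$ in \eqref{H2} is a product of \emph{three} factors over $(s_{1}-1)_{3}$. Likewise the denominators $(n_{1}+a_{1}+a_{2}+b_{1}+b_{2}-2)_{2}$ are exactly $(m_{1}-1)_{2}$ with $m_{1}=n_{1}+a_{1}+a_{2}+b_{1}+b_{2}-1$; they signal \eqref{H1}, not \eqref{H2}.

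The correct argument, which is the paper's, stays with \eqref{H1} but permutes the numerator slots relative to the proof of \eqref{*1}: take $m_{1}\rightarrow n_{1}+a_{1}+a_{2}+b_{1}+b_{2}-1$, $m_{2}\rightarrow -n_{1}$, $m_{3}\rightarrow a_{2}+\frac{x_{1}}{2}$, $s_{1}\rightarrow a_{2}+b_{2}$, $s_{2}\rightarrow a_{1}+a_{2}$ and $z\rightarrow 1$. Then the left-hand side of \eqref{H1} vanishes, and the shifts $m_{1}\mapsto m_{1}-1,\,m_{1}-2$ realize $b_{1}\mapsto b_{1}-1,\,b_{1}-2$ with $n_{1}$ held fixed (this is precisely how \eqref{*2} differs from \eqref{*1}, where one instead sets $m_{1}\rightarrow -n_{1}$ so that the shifts move $n_{1}$ and $b_{1}$ simultaneously). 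Substituting these values into $B_{1},C_{1},B_{2},C_{2},B_{3}$ of \eqref{H1} yields the stated coefficients. As written, your proposal would not produce \eqref{*2}.
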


\begin{proof}
By taking $m_{1}\rightarrow n_{1}+a_{1}+a_{2}+b_{1}+b_{2}-1$, $%
m_{2}\rightarrow -n_{1}$, $m_{3}\rightarrow a_{2}+\frac{x_{1}}{2}$, $%
s_{1}\rightarrow a_{2}+b_{2}$, $s_{2}\rightarrow a_{1}+a_{2}$ and $%
z\rightarrow 1$ in the relation (\ref{H1}), we obtain the desired relation.
\end{proof}

\begin{lemma}
For the family of the special function$\ _{1}S_{n_{1}}\left(
x_{1};a_{1},a_{2},b_{1},b_{2}\right) $, the following relation holds%
\begin{multline}\label{*3}
\left( B_{1}+C_{1}\right) \ _{1}S_{n_{1}}\left(
x_{1};a_{1},a_{2},b_{1},b_{2}\right) +\left( B_{2}+C_{2}\right) \
_{1}S_{n_{1}}\left( x_{1};a_{1}+1,a_{2}-1,b_{1}-1,b_{2}+1\right) \\
+B_{3}\ _{1}S_{n_{1}}\left( x_{1};a_{1}+2,a_{2}-2,b_{1}-2,b_{2}+2\right) =0,
\end{multline}%
where
\begin{eqnarray*}
B_{1}+C_{1} &=&-\frac{b_{1}-1+\frac{x_{1}}{2}}{a_{2}+\frac{x_{1}}{2}}%
~~,~~B_{3}=-\frac{\left( a_{1}+1-\frac{x_{1}}{2}\right) \left( b_{2}+1-\frac{%
x_{1}}{2}\right) }{\left( a_{2}-1+\frac{x_{1}}{2}\right) _{2}}, \\
B_{2}+C_{2} &=&\frac{\left( a_{2}+b_{2}\right) \left( a_{1}+a_{2}\right)
+n_{1}\left( n_{1}+a_{1}+a_{2}+b_{1}+b_{2}-1\right) }{\left( a_{2}-1+\frac{%
x_{1}}{2}\right) _{2}} \\
&&-\frac{a_{1}+a_{2}-b_{1}+b_{2}+2-x_{1}}{a_{2}+\frac{x_{1}}{2}}.
\end{eqnarray*}
\end{lemma}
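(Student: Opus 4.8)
The plan is to obtain \eqref{*3} in exactly the same way as the two preceding lemmas, namely by specializing the hypergeometric contiguous relation \eqref{H1} at $z=1$, but choosing a new identification of the parameter that gets shifted.

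First I would record the structural observation that fixes the right substitution. In \eqref{1boyut} the entries of the $_3F_2$ representing $_1S_{n_1}\left(x_1;a_1,a_2,b_1,b_2\right)$ depend on the parameters only through $-n_1$, $n_1+a_1+a_2+b_1+b_2-1$, $a_2+\frac{x_1}{2}$, $a_2+b_2$ and $a_1+a_2$. A direct check shows that the shift $\left(a_1,a_2,b_1,b_2\right)\mapsto\left(a_1+1,a_2-1,b_1-1,b_2+1\right)$ leaves $-n_1$, $n_1+a_1+a_2+b_1+b_2-1$, $a_2+b_2$ and $a_1+a_2$ all fixed, while lowering the single numerator entry $a_2+\frac{x_1}{2}$ by one. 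Hence the three functions in \eqref{*3} are precisely the $_3F_2$'s in which $a_2+\frac{x_1}{2}$ is replaced by $a_2+\frac{x_1}{2}$, $a_2-1+\frac{x_1}{2}$, $a_2-2+\frac{x_1}{2}$, every other entry being frozen.

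I would therefore substitute $m_1\rightarrow a_2+\frac{x_1}{2}$, $m_2\rightarrow -n_1$, $m_3\rightarrow n_1+a_1+a_2+b_1+b_2-1$, $s_1\rightarrow a_2+b_2$, $s_2\rightarrow a_1+a_2$ and $z\rightarrow 1$ into \eqref{H1}, so that the parameter now being shifted is $a_2+\frac{x_1}{2}$. By the symmetry of the $_3F_2$ in its numerator parameters the two remaining assignments are interchangeable, and the resulting $_3F_2$'s coincide with the $_1S$ functions of \eqref{1boyut} up to a reordering of numerator entries; indeed the coefficients in \eqref{H1} depend on $m_2,m_3$ only through $m_2+m_3$ and $m_2m_3$. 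At $z=1$ the factor $(z-1)$ on the left of \eqref{H1} vanishes, so its right-hand side must vanish as well, giving a three-term relation among the $_3F_2$'s with numerator parameter $a_2+\frac{x_1}{2}$, $a_2-1+\frac{x_1}{2}$ and $a_2-2+\frac{x_1}{2}$. Reinterpreting these through the correspondence above reproduces the left-hand side of \eqref{*3}.

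The only remaining task is to confirm that $B_1+C_1$, $B_2+C_2$ and $B_3$ from \eqref{H1} collapse, under this substitution, to the stated closed forms, and I expect this bookkeeping to be the main obstacle. With $m_2+m_3=a_1+a_2+b_1+b_2-1$ and $m_2m_3=-n_1\left(n_1+a_1+a_2+b_1+b_2-1\right)$, the evaluations of $B_1+C_1$ (denominator $m_1=a_2+\frac{x_1}{2}$) and of $B_3$ (denominator $(m_1-1)_2=\left(a_2-1+\frac{x_1}{2}\right)_2$) are short. The delicate step is $B_2+C_2$: the two degree-two numerators coming from $B_2$ and $C_2$ share a common factor $m_1-1$, and after combining them this factor cancels one factor of $(m_1-1)_2$, which is precisely why the final expression splits into one term with denominator $\left(a_2-1+\frac{x_1}{2}\right)_2$ and one term with denominator $a_2+\frac{x_1}{2}$, as stated. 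Keeping these two denominators straight through the cancellation is the one point requiring care; everything else is direct substitution.
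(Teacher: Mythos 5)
Your substitution $m_1\rightarrow a_2+\frac{x_1}{2}$, $m_2\rightarrow -n_1$, $m_3\rightarrow n_1+a_1+a_2+b_1+b_2-1$, $s_1\rightarrow a_2+b_2$, $s_2\rightarrow a_1+a_2$, $z\rightarrow 1$ in \eqref{H1} is exactly the one the paper uses, and your verification of the coefficients is consistent with the stated forms. The proposal is correct and follows essentially the same route as the paper's proof, with some additional (accurate) motivation for why this parameter identification matches the shift $(a_1,a_2,b_1,b_2)\mapsto(a_1+1,a_2-1,b_1-1,b_2+1)$.
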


\begin{proof}
If we get $m_{1}\rightarrow a_{2}+\frac{x_{1}}{2}$, $m_{2}\rightarrow
-n_{1}$, $m_{3}\rightarrow n_{1}+a_{1}+a_{2}+b_{1}+b_{2}-1$, $%
s_{1}\rightarrow a_{2}+b_{2}$, $s_{2}\rightarrow a_{1}+a_{2}$ and $%
z\rightarrow 1$ in relation \eqref{H1}, the above identity is arrived.
\end{proof}

\begin{lemma}
The family of the special function$\ _{1}S_{n_{1}}\left(
x_{1};a_{1},a_{2},b_{1},b_{2}\right) $ satisfies the relation%
\begin{multline} \label{*4}
\left( B_{1}+C_{1}\right) \ _{1}S_{n_{1}}\left(
x_{1};a_{1},a_{2},b_{1},b_{2}\right) +\left( B_{2}+C_{2}\right) \
_{1}S_{n_{1}}\left( x_{1};a_{1}+1,a_{2},b_{1}-1,b_{2}\right)  
\\
+C_{3}\ _{1}S_{n_{1}}\left( x_{1};a_{1}+2,a_{2},b_{1}-2,b_{2}\right) =0,
\end{multline}%
where
\begin{align*}
B_{1}+C_{1} &=-\frac{b_{1}-1+\frac{x_{1}}{2}}{a_{1}+a_{2}-1}, \\
B_{2}+C_{2} &=\frac{\left( 2\left( a_{1}+a_{2}\right) +1\right) \left(
a_{2}+b_{1}+b_{2}-2+\frac{x_{1}}{2}\right) +n_{1}\left(
n_{1}+a_{1}+a_{2}+b_{1}+b_{2}-1\right) }{\left( a_{1}+a_{2}-1\right) _{2}} \\
&-\frac{\left( a_{1}+a_{2}+b_{1}+b_{2}-1\right) \left( a_{2}+\frac{x_{1}}{2}%
\right) }{\left( a_{1}+a_{2}-1\right) _{2}}-\frac{a_{2}+b_{2}-1}{%
a_{1}+a_{2}-1}, \\
C_{3} &=-\frac{\left( n_{1}+a_{1}+a_{2}+1\right) \left(
n_{1}+b_{1}+b_{2}-2\right) \left( a_{1}+1-\frac{x_{1}}{2}\right) }{\left(
a_{1}+a_{2}-1\right) _{3}}.
\end{align*}
\end{lemma}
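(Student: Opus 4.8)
The plan is to observe that all three functions in \eqref{*4} are, through the $_3F_2$-representation in \eqref{1boyut}, one and the same terminating hypergeometric series with only a single lower parameter shifted. Writing ${}_1S_{n_1}(x_1;a_1,a_2,b_1,b_2)$ as the ${}_3F_2$ with upper entries $-n_1$, $a_2+\tfrac{x_1}{2}$, $n_1+a_1+a_2+b_1+b_2-1$ and lower entries $a_2+b_2$, $a_1+a_2$, the simultaneous replacement $a_1\to a_1+1$, $b_1\to b_1-1$ keeps $a_1+b_1$ fixed, hence leaves the third upper entry and the lower entry $a_2+b_2$ unchanged while raising the lower entry $a_1+a_2$ by one. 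Thus the second and third terms of \eqref{*4} are obtained from the first by shifting $s_1=a_1+a_2$ to $s_1+1$ and $s_1+2$, which is precisely the contiguity pattern of relation \eqref{H2}.

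First I would apply \eqref{H2} under the substitution of the three upper parameters $-n_1$, $a_2+\tfrac{x_1}{2}$, $n_1+a_1+a_2+b_1+b_2-1$ as $m_1,m_2,m_3$ (in any order, since ${}_3F_2$ is symmetric in its numerator parameters), together with $s_1\to a_1+a_2$, $s_2\to a_2+b_2$, and $z\to 1$. Because $m=-n_1$ is a nonpositive integer the series terminates, so each ${}_3F_2$ is a polynomial and the limit $z\to 1$ is legitimate, exactly as in the proofs of \eqref{*1}--\eqref{*3}. The crucial feature is the prefactor $(z-1)$ on the left of \eqref{H2}: it annihilates the entire left-hand side at $z=1$, so the relation collapses to the homogeneous three-term identity $(B_1+C_1)\,{}_1S_{n_1}(x_1;a_1,a_2,b_1,b_2)+(B_2+C_2)\,{}_1S_{n_1}(x_1;a_1+1,a_2,b_1-1,b_2)+C_3\,{}_1S_{n_1}(x_1;a_1+2,a_2,b_1-2,b_2)=0$ displayed in \eqref{*4}.

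It then remains to specialize the coefficient formulas of \eqref{H2}. For $B_1+C_1$ one simplifies $\tfrac{(s_2-2s_1)+(3s_1-m_1-m_2-m_3)}{s_1-1}=\tfrac{s_1+s_2-(m_1+m_2+m_3)}{s_1-1}$ and substitutes $m_1+m_2+m_3=a_1+2a_2+b_1+b_2+\tfrac{x_1}{2}-1$ to obtain $-\tfrac{b_1-1+x_1/2}{a_1+a_2-1}$. For $C_3$ one evaluates the symmetric product $\tfrac{(s_1-m_1+1)(s_1-m_2+1)(s_1-m_3+1)}{(s_1-1)_3}$, whose three linear factors become $n_1+a_1+a_2+1$, $a_1+1-\tfrac{x_1}{2}$ and $-(n_1+b_1+b_2-2)$, matching the stated $C_3$.

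The main obstacle will be verifying $B_2+C_2$: the expression for $C_2$ in \eqref{H2} carries the unwieldy numerator $(2s_1+1)(m_1+m_2+m_3)-3(s_1-1)(s_1+2)-7-(m_1m_2+m_1m_3+m_2m_3)$ over $(s_1-1)_2$, and combining it with $B_2=\tfrac{s_1-s_2+1}{s_1-1}$ over the common denominator $(a_1+a_2-1)_2$ demands careful bookkeeping of the quadratic cross terms in $n_1$ and $x_1$. I expect this step to be entirely mechanical once the substitutions are fixed, splitting into the two-part form recorded for $B_2+C_2$; the symmetry of ${}_3F_2$ in its numerator parameters guarantees that the particular assignment of $m_1,m_2,m_3$ does not affect the outcome.
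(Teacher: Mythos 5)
Your proposal is correct and follows exactly the paper's route: the paper proves \eqref{*4} by the one-line substitution $m_{1}\rightarrow -n_{1}$, $m_{2}\rightarrow n_{1}+a_{1}+a_{2}+b_{1}+b_{2}-1$, $m_{3}\rightarrow a_{2}+\frac{x_{1}}{2}$, $s_{1}\rightarrow a_{1}+a_{2}$, $s_{2}\rightarrow a_{2}+b_{2}$, $z\rightarrow 1$ in \eqref{H2}, which is precisely your substitution (your remark that the ordering of the $m_i$ is immaterial is valid, since the coefficients of \eqref{H2} depend on them only through symmetric functions). Your additional observations---that the simultaneous shift $a_{1}\to a_{1}+1$, $b_{1}\to b_{1}-1$ realizes the pure $s_{1}\to s_{1}+1$ contiguity, and that termination plus the $(z-1)$ prefactor makes the identity homogeneous at $z=1$---are the correct justifications that the paper leaves implicit.
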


\begin{proof}
Taking $m_{1}\rightarrow -n_{1}$, $m_{2}\rightarrow
n_{1}+a_{1}+a_{2}+b_{1}+b_{2}-1$, $m_{3}\rightarrow a_{2}+\frac{x_{1}}{2}$, $%
s_{1}\rightarrow a_{1}+a_{2}$, $s_{2}\rightarrow a_{2}+b_{2}$ and $%
z\rightarrow 1$ in \eqref{H2}, the proof is completed.
\end{proof}

\begin{lemma}
The family of the special function$\ _{1}S_{n_{1}}\left(
x_{1};a_{1},a_{2},b_{1},b_{2}\right) $ satisfies the recurrence relation%
\begin{multline}\label{*6}
\left( B_{1}+C_{1}\right) \ _{1}S_{n_{1}}\left(
x_{1};a_{1},a_{2},b_{1},b_{2}\right) +\left( B_{2}+C_{2}\right) \
_{1}S_{n_{1}}\left( x_{1};a_{1},a_{2},b_{1}-1,b_{2}+1\right)  \\
+C_{3}\ _{1}S_{n_{1}}\left( x_{1};a_{1},a_{2},b_{1}-2,b_{2}+2\right) =0,
\end{multline}%
where
\begin{align*}
B_{1}+C_{1} &=-\frac{b_{1}-1+\frac{x_{1}}{2}}{a_{2}+b_{2}-1}, \\
B_{2}+C_{2} &=\frac{\left( 2\left( a_{2}+b_{2}\right) +1\right) \left(
a_{1}+a_{2}+b_{1}-2+\frac{x_{1}}{2}\right) +n_{1}\left(
n_{1}+a_{1}+a_{2}+b_{1}+b_{2}-1\right) }{\left( a_{2}+b_{2}-1\right) _{2}} \\
&-\frac{\left( a_{1}+a_{2}+b_{1}+b_{2}-1\right) \left( a_{2}+\frac{x_{1}}{2}%
\right) }{\left( a_{2}+b_{2}-1\right) _{2}}-\frac{a_{1}+a_{2}-1}{%
a_{2}+b_{2}-1}, \\
C_{3} &=-\frac{\left( n_{1}+a_{2}+b_{2}+1\right) \left(
n_{1}+a_{1}+b_{1}-2\right) \left( b_{2}+1-\frac{x_{1}}{2}\right) }{\left(
a_{2}+b_{2}-1\right) _{3}}.
\end{align*}
\end{lemma}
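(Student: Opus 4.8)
The plan is to treat this exactly as Lemmas \eqref{*1}--\eqref{*4} were handled: realize \eqref{*6} as a single specialization of one of the three master $_3F_2$ recurrences stated in the opening Lemma of this section, evaluated at the terminating point $z=1$. The first task is to decide which of \eqref{H1}, \eqref{H2}, \eqref{H3} to invoke, and this is dictated by how the parameters of $_1S_{n_1}$ move across the three terms. Rewriting each $_1S_{n_1}$ through its defining $_3F_2$ from \eqref{1boyut}, I observe that under the replacement $(b_1,b_2)\mapsto(b_1-1,b_2+1)$ the three numerator entries $-n_1$, $a_2+\frac{x_1}{2}$, $n_1+a_1+a_2+b_1+b_2-1$ are all invariant, and so is the lower entry $a_1+a_2$; only the lower entry $a_2+b_2$ advances by one at each step. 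Thus the three terms of \eqref{*6} are $_3F_2$'s differing solely in one denominator parameter, shifted successively by $0$, $1$, $2$, which is precisely the pattern of relation \eqref{H2}.

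First I would substitute $m_1\to -n_1$, $m_2\to n_1+a_1+a_2+b_1+b_2-1$, $m_3\to a_2+\frac{x_1}{2}$, $s_1\to a_2+b_2$, $s_2\to a_1+a_2$, and $z\to 1$ in \eqref{H2}, the mirror image of the proof of \eqref{*4} with the two lower parameters exchanged. At $z=1$ the left-hand side of \eqref{H2} vanishes through its prefactor $z-1$, while the accompanying $_3F_2$ stays finite because $m_1=-n_1$ terminates the series; what survives is exactly the three-term identity \eqref{*6}, with the $s_1$, $s_1+1$, $s_1+2$ terms carrying the coefficients $B_1+C_1$, $B_2+C_2$, $C_3$ respectively. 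It then remains only to confirm that the specialized coefficients collapse to the stated forms. For $B_1+C_1$ one merges the two fractions over $s_1-1=a_2+b_2-1$ and uses $s_1+s_2-(m_1+m_2+m_3)=-(b_1-1+\frac{x_1}{2})$; for $C_3$ one expands $(s_1-m_1+1)(s_1-m_2+1)(s_1-m_3+1)$ over $(a_2+b_2-1)_3$. Both reduce at once, the surfacing of the factors $n_1+a_2+b_2+1$, $n_1+a_1+b_1-2$, and $b_2+1-\frac{x_1}{2}$ in $C_3$ giving a quick consistency check.

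The single genuinely laborious point is $B_2+C_2$, whose expression in \eqref{H2} carries the numerator $(2s_1+1)(m_1+m_2+m_3)-3(s_1-1)(s_1+2)-7-m_1m_2-m_1m_3-m_2m_3$; substituting the above values and collecting over $(a_2+b_2-1)_2$ is a lengthy but purely mechanical simplification, and the appearance of the telltale factor $2(a_2+b_2)+1$ in the stated $B_2+C_2$ already signals that the bookkeeping will close. Since the series terminates there is no convergence subtlety at $z=1$, so I foresee no conceptual obstacle---only the careful algebra of matching $B_2+C_2$.
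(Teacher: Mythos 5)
Your proposal is correct and follows exactly the paper's own route: the paper proves \eqref{*6} by the very substitution you give, namely $m_{1}\to -n_{1}$, $m_{2}\to n_{1}+a_{1}+a_{2}+b_{1}+b_{2}-1$, $m_{3}\to a_{2}+\tfrac{x_{1}}{2}$, $s_{1}\to a_{2}+b_{2}$, $s_{2}\to a_{1}+a_{2}$, $z\to 1$ in \eqref{H2}. Your coefficient checks for $B_{1}+C_{1}$ and $C_{3}$ also agree with the stated formulas, so nothing further is needed.
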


\begin{proof}
Replacing $m_{1}\rightarrow -n_{1}$, $m_{2}\rightarrow
n_{1}+a_{1}+a_{2}+b_{1}+b_{2}-1$, $m_{3}\rightarrow a_{2}+\frac{x_{1}}{2}$, $%
s_{1}\rightarrow a_{2}+b_{2}$, $s_{2}\rightarrow a_{1}+a_{2}$ and $%
z\rightarrow 1$ in relation \eqref{H2}, the relation \eqref{*6} is obtained.
\end{proof}

\begin{lemma}
The family of the special function$\ _{1}S_{n_{1}}\left(
x_{1};a_{1},a_{2},b_{1},b_{2}\right) $ verifies the recurrence relation%
\begin{multline}\label{*7}
\left( a_{1}+a_{2}\right) \ _{1}S_{n_{1}}\left(
x_{1};a_{1},a_{2},b_{1},b_{2}\right) +\left( n_{1}+b_{1}+b_{2}-1\right) \
_{1}S_{n_{1}}\left( x_{1};a_{1}+1,a_{2},b_{1}-1,b_{2}\right)
\\
-\left( n_{1}+a_{1}+a_{2}+b_{1}+b_{2}-1\right) \ _{1}S_{n_{1}}\left(
x_{1};a_{1}+1,a_{2},b_{1},b_{2}\right) =0.
\end{multline}
\end{lemma}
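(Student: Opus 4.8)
The plan is to obtain \eqref{*7} as a direct specialization of the three-term contiguous relation \eqref{H3}, in the same spirit as the preceding lemmas but invoking \eqref{H3} rather than \eqref{H1} or \eqref{H2}. First I would write each of the three functions $\ _{1}S_{n_{1}}$ appearing in \eqref{*7} as its defining $\ _{3}F_{2}$ series via \eqref{1boyut}. The key structural observation is that the parameters $a_{2}$ and $b_{2}$ are held fixed across all three terms, so the lower entry $a_{2}+b_{2}$ and the upper entry $a_{2}+\frac{x_{1}}{2}$ never move; only the entries $a_{1}+a_{2}$ and $n_{1}+a_{1}+a_{2}+b_{1}+b_{2}-1$ are affected by the shifts $a_{1}\mapsto a_{1}+1$ and $b_{1}\mapsto b_{1}-1$.

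This pins down the substitution to use in \eqref{H3}: I would set $m_{1}\rightarrow n_{1}+a_{1}+a_{2}+b_{1}+b_{2}-1$, $m_{2}\rightarrow -n_{1}$, $m_{3}\rightarrow a_{2}+\frac{x_{1}}{2}$, $s_{1}\rightarrow a_{1}+a_{2}$, $s_{2}\rightarrow a_{2}+b_{2}$ and $z\rightarrow 1$. With these choices the unshifted $\ _{3}F_{2}\left(m_{1},m_{2},m_{3};s_{1},s_{2};1\right)$ is exactly $\ _{1}S_{n_{1}}\left(x_{1};a_{1},a_{2},b_{1},b_{2}\right)$. Raising $s_{1}$ by one while leaving $m_{1}$ fixed, as in the second $\ _{3}F_{2}$ of \eqref{H3}, corresponds to $a_{1}+a_{2}\mapsto a_{1}+a_{2}+1$ with $n_{1}+a_{1}+a_{2}+b_{1}+b_{2}-1$ unchanged, which forces $a_{1}\mapsto a_{1}+1$ and $b_{1}\mapsto b_{1}-1$; hence that term becomes $\ _{1}S_{n_{1}}\left(x_{1};a_{1}+1,a_{2},b_{1}-1,b_{2}\right)$. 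Raising both $m_{1}$ and $s_{1}$ by one, as in the third $\ _{3}F_{2}$, gives $a_{1}\mapsto a_{1}+1$ with $b_{1}$ unchanged, producing $\ _{1}S_{n_{1}}\left(x_{1};a_{1}+1,a_{2},b_{1},b_{2}\right)$.

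It then remains only to read off the coefficients. From the substitution one has $s_{1}=a_{1}+a_{2}$, $m_{1}-s_{1}=n_{1}+b_{1}+b_{2}-1$ and $m_{1}=n_{1}+a_{1}+a_{2}+b_{1}+b_{2}-1$; inserting these into \eqref{H3} and transposing the right-hand term yields precisely \eqref{*7}. I expect no serious obstacle: the only delicate point is the bookkeeping that matches each parameter shift of the $\ _{3}F_{2}$ to the intended shift of $\left(a_{1},a_{2},b_{1},b_{2}\right)$, and since \eqref{H3} is the unique relation among \eqref{H1}, \eqref{H2} and \eqref{H3} in which exactly one upper and one lower parameter increase simultaneously, the identification is essentially forced.
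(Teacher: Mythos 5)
Your proposal is correct and coincides with the paper's own proof: the authors verify \eqref{*7} by the very same substitution $m_{1}\rightarrow n_{1}+a_{1}+a_{2}+b_{1}+b_{2}-1$, $m_{2}\rightarrow -n_{1}$, $m_{3}\rightarrow a_{2}+\frac{x_{1}}{2}$, $s_{1}\rightarrow a_{1}+a_{2}$, $s_{2}\rightarrow a_{2}+b_{2}$, $z\rightarrow 1$ in \eqref{H3}. Your additional bookkeeping identifying each shifted $\ _{3}F_{2}$ with the corresponding $\ _{1}S_{n_{1}}$ is accurate and merely makes explicit what the paper leaves to the reader.
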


\begin{proof}
By taking $m_{1}\rightarrow n_{1}+a_{1}+a_{2}+b_{1}+b_{2}-1$, $m_{2}\rightarrow
-n_{1}$, $m_{3}\rightarrow a_{2}+\frac{x_{1}}{2}$, $s_{1}\rightarrow
a_{1}+a_{2}$, $s_{2}\rightarrow a_{2}+b_{2}$ and $z\rightarrow 1$ in
relation \eqref{H3}, it is verified.
\end{proof}

From \eqref{Func1} it is easily seen that the function $_{2}S_{n_{1},n_{2}}\left(
x_{1},x_{2};a_{1},a_{2},a_{3},b_{1},b_{2},b_{3}\right)$ is written in terms of $_{1}S_{n_{1}}\left(
x_{1};a_{1},a_{2},b_{1},b_{2}\right)$ as follows
\begin{multline}\label{relation1}
_{2}S_{n_{1},n_{2}}\left(
x_{1},x_{2};a_{1},a_{2},a_{3},b_{1},b_{2},b_{3}\right)
=~_{1}S_{n_{1}}\left(
x_{1};a_{1},a_{2}+a_{3}+n_{2},b_{1},b_{2}+b_{3}+n_{2}\right)  \\
\times \hyper{3}{2}{-n_{2} ,\
a_{3}+x_{2} /2,\ n_{2} +a_{2}+a_{3}+b_{2}+b_{3}-1}{a_{3}+b_{3},\ a_{2}+a_{3}}{1}  \left( a_{2}+a_{3}+\frac{x_{1}}{2}\right) _{n_{2}}
\end{multline}%
and
\begin{multline}\label{relation2}
_{2}S_{n_{1},n_{2}}\left(
x_{1},x_{2};a_{1},a_{2},a_{3},b_{1},b_{2},b_{3}\right)
= \left( a_{2}+a_{3}+\frac{x_{1}}{2}\right)_{n_{2}}\,  _{1}S_{n_{2}}\left(
x_{2};a_{2},a_{3},b_{2},b_{3}\right)   \\
\times \hyper{3}{2}{-n_{1} ,\
n_{2}+a_{2}+a_{3}+x_{1} /2,\ n_{1}+2n_{2} +a_{1}+a_{2}+a_{3}+b_{1}+b_{2}+b_{3}-1}{2n_{2}+a_{2}+a_{3}+b_{2}+b_{3},\ n_{2}+a_{1}+a_{2}+a_{3}}{1}.
\end{multline}

By using these relations and lemmas given above for $_{1}S_{n_{1}}\left(
x_{1};a_{1},a_{2},b_{1},b_{2}\right) ,$ we can derive several recurrence
relations for the function $_{2}S_{n_{1},n_{2}}\left(
x_{1},x_{2};a_{1},a_{2},a_{3},b_{1},b_{2},b_{3}\right) .$

\begin{theorem}
The family of the special function$\ _{2}S_{n_{1},n_{2}}\left(
x_{1},x_{2};a_{1},a_{2},a_{3},b_{1},b_{2},b_{3}\right) $ satisfies the recurrence relation%
\begin{multline}\label{101}
\left( B_{1}+C_{1}\right) \ _{2}S_{n_{1},n_{2}}\left(
x_{1},x_{2};a_{1},a_{2},a_{3},b_{1},b_{2},b_{3}\right)    \\
+\left( B_{2}+C_{2}\right) \ _{2}S_{n_{1}+1,n_{2}}\left(
x_{1},x_{2};a_{1},a_{2},a_{3},b_{1}-1,b_{2},b_{3}\right)    \\
+B_{3}\ _{2}S_{n_{1}+2,n_{2}}\left(
x_{1},x_{2};a_{1},a_{2},a_{3},b_{1}-2,b_{2},b_{3}\right) =0,
\end{multline}%
where
\begin{align*}
B_{1}+C_{1} &=\frac{b_{1}-1+\frac{x_{1}}{2}}{n_{1}}, \\
B_{2}+C_{2} &=\frac{\left( 2n_{2}+a_{2}+a_{3}+b_{2}+b_{3}\right) \left(
n_{2}+a_{1}+a_{2}+a_{3}\right) }{\left( n_{1}\right) _{2}} \\
&-\frac{\left( n_{1}+2n_{2}+a_{1}+a_{2}+a_{3}+b_{1}+b_{2}+b_{3}-1\right)
\left( n_{2}+a_{2}+a_{3}+\frac{x_{1}}{2}\right) }{\left( n_{1}\right) _{2}}
\\
&+\frac{n_{1}+3n_{2}+a_{1}+2\left( a_{2}+a_{3}\right) -b_{1}+b_{2}+b_{3}+2-%
\frac{x_{1}}{2}}{n_{1}}, \\
B_{3} &=-\frac{\left( n_{1}+2n_{2}+a_{2}+a_{3}+b_{2}+b_{3}+1\right) \left(
n_{1}+n_{2}+a_{1}+a_{2}+a_{3}+1\right) }{\left( n_{1}\right) _{2}}.
\end{align*}
\end{theorem}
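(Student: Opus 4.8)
The plan is to deduce the bivariate recurrence \eqref{101} directly from the univariate recurrence \eqref{*1} by exploiting the factorization \eqref{relation1}. First I would rewrite \eqref{relation1} in the schematic form
\[
{}_{2}S_{n_{1},n_{2}}\left(x_{1},x_{2};a_{1},a_{2},a_{3},b_{1},b_{2},b_{3}\right)
={}_{1}S_{n_{1}}\left(x_{1};a_{1},a_{2}+a_{3}+n_{2},b_{1},b_{2}+b_{3}+n_{2}\right)\,\Phi,
\]
where $\Phi$ stands for the product of the remaining ${}_{3}F_{2}$-factor in $x_{2}$ and the Pochhammer symbol $\left(a_{2}+a_{3}+\frac{x_{1}}{2}\right)_{n_{2}}$ appearing in \eqref{relation1}. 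The decisive observation is that $\Phi$ depends on neither $n_{1}$ nor $b_{1}$, while the three terms in \eqref{101} differ from one another exactly through the shifts $\left(n_{1},b_{1}\right)\mapsto\left(n_{1},b_{1}\right),\left(n_{1}+1,b_{1}-1\right),\left(n_{1}+2,b_{1}-2\right)$, which are precisely the shifts performed by \eqref{*1}.

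Next I would apply the recurrence \eqref{*1} to the factor ${}_{1}S_{n_{1}}$, making in it the parameter replacements $a_{2}\mapsto a_{2}+a_{3}+n_{2}$ and $b_{2}\mapsto b_{2}+b_{3}+n_{2}$ (leaving $a_{1}$ and $b_{1}$ untouched). Since $\Phi$ is a common factor of all three terms, it cancels out of the homogeneous relation and one is left with a three-term identity for ${}_{2}S$ whose coefficients are obtained from those of \eqref{*1} under the same substitution. The final step is purely bookkeeping: I would substitute $a_{2}+a_{3}+n_{2}$ for $a_{2}$ and $b_{2}+b_{3}+n_{2}$ for $b_{2}$ in the coefficients $B_{1}+C_{1}$, $B_{2}+C_{2}$ and $B_{3}$ of \eqref{*1} and verify that they reproduce those stated in the theorem. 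For example, $B_{1}+C_{1}=\frac{b_{1}-1+\frac{x_{1}}{2}}{n_{1}}$ is free of $a_{2},b_{2}$ and hence unchanged, while $B_{3}=-\frac{\left(n_{1}+a_{2}+b_{2}+1\right)\left(n_{1}+a_{1}+a_{2}+1\right)}{\left(n_{1}\right)_{2}}$ turns into $-\frac{\left(n_{1}+2n_{2}+a_{2}+a_{3}+b_{2}+b_{3}+1\right)\left(n_{1}+n_{2}+a_{1}+a_{2}+a_{3}+1\right)}{\left(n_{1}\right)_{2}}$, which is exactly the claimed $B_{3}$; the longer coefficient $B_{2}+C_{2}$ is checked in the same mechanical way.

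I do not expect any genuine obstacle here: once the factorization \eqref{relation1} is available, the result is an immediate corollary of the univariate recurrence, and the only work is the routine algebraic verification that the shifted coefficients coincide. The one point deserving care is to confirm that the two pieces collected into $\Phi$ really are inert under the $\left(n_{1},b_{1}\right)$-shifts; this is clear from \eqref{relation1}, since the $x_{2}$-dependent ${}_{3}F_{2}$ involves only $n_{2},a_{2},a_{3},b_{2},b_{3}$ and the Pochhammer symbol $\left(a_{2}+a_{3}+\frac{x_{1}}{2}\right)_{n_{2}}$ contains neither $n_{1}$ nor $b_{1}$.
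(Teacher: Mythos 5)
Your proposal is correct and follows essentially the same route as the paper: the authors likewise substitute $a_{2}\rightarrow n_{2}+a_{2}+a_{3}$, $b_{2}\rightarrow n_{2}+b_{2}+b_{3}$ in \eqref{*1}, multiply by the $n_{1}$- and $b_{1}$-independent factor $\left(a_{2}+a_{3}+\frac{x_{1}}{2}\right)_{n_{2}}$ times the $x_{2}$-dependent ${}_{3}F_{2}$, and conclude via \eqref{relation1}. Your explicit check that this common factor is inert under the $(n_{1},b_{1})$-shifts, and your verification of the transformed coefficients, are exactly the details the paper leaves implicit.
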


\begin{proof}
Substituting $a_{2}\rightarrow n_{2}+a_{2}+a_{3}$ and $b_{2}\rightarrow
n_{2}+b_{2}+b_{3}$ in the relation \eqref{*1}, if the obtained identity is
multiplied by
$$
\left( a_{2}+a_{3}+\frac{x_{1}}{2}\right) _{n_{2}} \hyper{3}{2}{-n_{2} ,\
a_{3}+x_{2} /2,\ n_{2} +a_{2}+a_{3}+b_{2}+b_{3}-1}{a_{3}+b_{3},\ a_{2}+a_{3}}{1},
$$
then the proof is completed from \eqref{relation1}.
\end{proof}

\begin{theorem}
For the family of the special function$\ _{2}S_{n_{1},n_{2}}\left(
x_{1},x_{2};a_{1},a_{2},a_{3},b_{1},b_{2},b_{3}\right) $, we have
\begin{multline}\label{102}
\left( B_{1}+C_{1}\right) \ _{2}S_{n_{1},n_{2}}\left(
x_{1},x_{2};a_{1},a_{2},a_{3},b_{1},b_{2},b_{3}\right)    \\
+\left( B_{2}+C_{2}\right) \ _{2}S_{n_{1},n_{2}}\left(
x_{1},x_{2};a_{1},a_{2},a_{3},b_{1}-1,b_{2},b_{3}\right)    \\
+B_{3}\ _{2}S_{n_{1},n_{2}}\left(
x_{1},x_{2};a_{1},a_{2},a_{3},b_{1}-2,b_{2},b_{3}\right) =0,
\end{multline}%
where
\begin{align*}
B_{1}+C_{1} &=-\frac{b_{1}-1+\frac{x_{1}}{2}}{%
n_{1}+2n_{2}+a_{1}+a_{2}+a_{3}+b_{1}+b_{2}+b_{3}-1}, \\
B_{2}+C_{2} &=\frac{\left( 2n_{2}+a_{2}+a_{3}+b_{2}+b_{3}\right) \left(
n_{2}+a_{1}+a_{2}+a_{3}\right) +n_{1}\left( n_{2}+a_{2}+a_{3}+\frac{x_{1}}{2}%
\right) }{\left( n_{1}+2n_{2}+a_{1}+a_{2}+a_{3}+b_{1}+b_{2}+b_{3}-2\right)
_{2}} \\
&+\frac{n_{1}-n_{2}-\left( a_{2}+a_{3}\right) +2b_{1}-3+\frac{x_{1}}{2}}{%
n_{1}+2n_{2}+a_{1}+a_{2}+a_{3}+b_{1}+b_{2}+b_{3}-1}, \\
B_{3} &=-\frac{\left( n_{1}+a_{1}+b_{1}-2\right) \left(
n_{1}+n_{2}+b_{1}+b_{2}+b_{3}-2\right) }{\left(
n_{1}+2n_{2}+a_{1}+a_{2}+a_{3}+b_{1}+b_{2}+b_{3}-2\right) _{2}}.
\end{align*}
\end{theorem}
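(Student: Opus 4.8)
The plan is to derive the bivariate recurrence \eqref{102} from the univariate relation \eqref{*2}, using the factorization \eqref{relation1}, in direct analogy with the proof of \eqref{101}. The observation driving the argument is that the recurrence \eqref{102} shifts only the parameter $b_{1}$ while keeping both indices $n_{1},n_{2}$ fixed; this is precisely the shape of the univariate relation \eqref{*2}, which moves $b_{1}\mapsto b_{1}-1,b_{1}-2$ at fixed $n_{1}$. By contrast, the previous theorem used \eqref{*1}, whose index $n_{1}$ does shift, matching \eqref{101}.

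First I would take the relation \eqref{*2} for $_{1}S_{n_{1}}\left(x_{1};a_{1},a_{2},b_{1},b_{2}\right)$ and perform the parameter substitutions $a_{2}\rightarrow n_{2}+a_{2}+a_{3}$ and $b_{2}\rightarrow n_{2}+b_{2}+b_{3}$. After this substitution every occurrence of $_{1}S_{n_{1}}$ acquires the arguments $\left(x_{1};a_{1},n_{2}+a_{2}+a_{3},\,\cdot\,,n_{2}+b_{2}+b_{3}\right)$, where the third slot runs through $b_{1},b_{1}-1,b_{1}-2$. These are exactly the univariate factors appearing on the right-hand side of \eqref{relation1} for the corresponding values of the $b_{1}$-parameter.

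Next I would multiply the whole substituted identity by the $b_{1}$-independent factor
\[
\left( a_{2}+a_{3}+\frac{x_{1}}{2}\right) _{n_{2}}\,
\hyper{3}{2}{-n_{2} ,\ a_{3}+x_{2} /2,\ n_{2}+a_{2}+a_{3}+b_{2}+b_{3}-1}{a_{3}+b_{3},\ a_{2}+a_{3}}{1},
\]
which is common to all three terms. By \eqref{relation1} this factor converts each $_{1}S_{n_{1}}$ into the corresponding $_{2}S_{n_{1},n_{2}}$ carrying the appropriate $b_{1}$-shift, producing precisely the three-term combination on the left of \eqref{102}.

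The only remaining point, which is routine bookkeeping rather than a genuine obstacle, is to confirm that the coefficients $B_{1}+C_{1}$, $B_{2}+C_{2}$, $B_{3}$ of \eqref{*2} transform under the same substitutions into the stated coefficients of \eqref{102}. The checks of $B_{1}+C_{1}$ and $B_{3}$ are immediate, since the denominators $n_{1}+a_{1}+a_{2}+b_{1}+b_{2}-1$ and $\left(n_{1}+a_{1}+a_{2}+b_{1}+b_{2}-2\right)_{2}$ map to $n_{1}+2n_{2}+a_{1}+a_{2}+a_{3}+b_{1}+b_{2}+b_{3}-1$ and its length-two Pochhammer, matching \eqref{102} at once. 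The messiest check is $B_{2}+C_{2}$; tracking $a_{2}+b_{2}\mapsto 2n_{2}+a_{2}+a_{3}+b_{2}+b_{3}$, $a_{1}+a_{2}\mapsto n_{2}+a_{1}+a_{2}+a_{3}$, and $a_{2}+\frac{x_{1}}{2}\mapsto n_{2}+a_{2}+a_{3}+\frac{x_{1}}{2}$ reproduces the stated numerator term by term, so the two expressions agree. This completes the proposed derivation.
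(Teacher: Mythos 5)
Your proposal is correct and follows essentially the same route as the paper: substitute $a_{2}\rightarrow n_{2}+a_{2}+a_{3}$, $b_{2}\rightarrow n_{2}+b_{2}+b_{3}$ in \eqref{*2}, multiply by the common ($b_{1}$-independent) factor $\left(a_{2}+a_{3}+\frac{x_{1}}{2}\right)_{n_{2}}$ times the $_{3}F_{2}$ in $x_{2}$, and invoke \eqref{relation1}. Your coefficient checks also match the paper's stated values.
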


\begin{proof}
After $a_{2}\rightarrow n_{2}+a_{2}+a_{3}$ and $b_{2}\rightarrow
n_{2}+b_{2}+b_{3}$ in relation \eqref{*2}, if we multiply
multiplied by
$$
\left( a_{2}+a_{3}+\frac{x_{1}}{2}\right) _{n_{2}} \hyper{3}{2}{-n_{2} ,\
a_{3}+x_{2} /2,\ n_{2} +a_{2}+a_{3}+b_{2}+b_{3}-1}{a_{3}+b_{3},\ a_{2}+a_{3}}{1},
$$
we arrive at the desired relation from \eqref{relation1}.
\end{proof}

\begin{theorem}
The family of the special function$\ _{2}S_{n_{1},n_{2}}\left(
x_{1},x_{2};a_{1},a_{2},a_{3},b_{1},b_{2},b_{3}\right) $ satisfies the
relation%
\begin{multline}\label{103}
\left( B_{1}+C_{1}\right) \ _{2}S_{n_{1},n_{2}}\left(
x_{1},x_{2};a_{1},a_{2},a_{3},b_{1},b_{2},b_{3}\right)    \\
+\left( B_{2}+C_{2}\right) \ _{2}S_{n_{1},n_{2}}\left(
x_{1},x_{2};a_{1},a_{2}+1,a_{3}-1,b_{1},b_{2}-1,b_{3}+1\right)    \\
+B_{3}\ _{2}S_{n_{1},n_{2}}\left(
x_{1},x_{2};a_{1},a_{2}+2,a_{3}-2,b_{1},b_{2}-2,b_{3}+2\right) =0,
\end{multline}%
where
\begin{align*}
B_{1}+C_{1} &=-\frac{b_{2}-1+\frac{x_{2}}{2}}{a_{3}+\frac{x_{2}}{2}}, \\
B_{2}+C_{2} &=\frac{\left( a_{2}+a_{3}\right) \left( a_{3}+b_{3}\right)
+n_{2}\left( n_{2}+a_{2}+a_{3}+b_{2}+b_{3}-1\right) }{\left( a_{3}-1+\frac{%
x_{2}}{2}\right) _{2}} \\
&-\frac{a_{2}+a_{3}-b_{2}+b_{3}+2-x_{2}}{a_{3}+\frac{x_{2}}{2}}, \\
B_{3} &=\frac{\left( a_{2}+1-\frac{x_{2}}{2}\right) \left( b_{3}+1-\frac{%
x_{2}}{2}\right) }{\left( a_{3}-1+\frac{x_{2}}{2}\right) _{2}}.
\end{align*}
\end{theorem}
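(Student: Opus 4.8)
The plan is to reduce this bivariate recurrence to the univariate one already recorded in \eqref{*3}, exploiting the factorization \eqref{relation2} of $_{2}S_{n_{1},n_{2}}$ in terms of $_{1}S_{n_{2}}$. The decisive observation is that the shift $(a_{2},a_{3},b_{2},b_{3})\mapsto(a_{2}+1,a_{3}-1,b_{2}-1,b_{3}+1)$ appearing in \eqref{103} (and its square) leaves the sums $a_{2}+a_{3}$ and $b_{2}+b_{3}$ unchanged, while $a_{1}$, $b_{1}$, $n_{1}$, $n_{2}$ are all held fixed. Unlike the proofs of \eqref{101} and \eqref{102}, which shift $b_{1}$ and therefore use \eqref{relation1} together with the substitution $a_{2}\to n_{2}+a_{2}+a_{3}$, $b_{2}\to n_{2}+b_{2}+b_{3}$, here the shift acts on the ``inner'' block $(a_{2},a_{3},b_{2},b_{3})$, which maps directly onto the arguments of $_{1}S_{n_{2}}(x_{2};a_{2},a_{3},b_{2},b_{3})$ in \eqref{relation2} with no preliminary substitution needed.

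First I would record that, under this shift, the entire prefactor of $_{1}S_{n_{2}}$ in \eqref{relation2} is invariant. The Pochhammer symbol $\left(a_{2}+a_{3}+\frac{x_{1}}{2}\right)_{n_{2}}$ depends on the parameters only through $a_{2}+a_{3}$, and the trailing $_{3}F_{2}$ in \eqref{relation2} involves $(a_{2},a_{3},b_{2},b_{3})$ only through the combinations $a_{2}+a_{3}$, $a_{2}+a_{3}+b_{2}+b_{3}$, $a_{1}+a_{2}+a_{3}$, and $a_{1}+a_{2}+a_{3}+b_{1}+b_{2}+b_{3}$, each of which is preserved. Hence this prefactor, say $M(x_{1})$, is literally the same for all three parameter vectors occurring in \eqref{103}, and can be pulled out as a common factor.

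Next I would apply \eqref{*3} under the relabelling $a_{1}\to a_{2}$, $a_{2}\to a_{3}$, $b_{1}\to b_{2}$, $b_{2}\to b_{3}$, $x_{1}\to x_{2}$, $n_{1}\to n_{2}$, turning it into a three-term relation among $_{1}S_{n_{2}}(x_{2};a_{2},a_{3},b_{2},b_{3})$, $_{1}S_{n_{2}}(x_{2};a_{2}+1,a_{3}-1,b_{2}-1,b_{3}+1)$, and $_{1}S_{n_{2}}(x_{2};a_{2}+2,a_{3}-2,b_{2}-2,b_{3}+2)$ whose coefficients $B_{1}+C_{1}$ and $B_{2}+C_{2}$ coincide, after the relabelling, with those in the statement. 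Multiplying this identity through by the common factor $M(x_{1})$ and invoking \eqref{relation2} termwise converts each $_{1}S_{n_{2}}$ into the corresponding $_{2}S_{n_{1},n_{2}}$, producing \eqref{103}. The argument is essentially bookkeeping, with no analytic obstacle; the one point that demands care is verifying that every shifted parameter enters $M(x_{1})$ only through the invariant sums $a_{2}+a_{3}$ and $b_{2}+b_{3}$, and then matching the coefficients verbatim against the relabelled \eqref{*3} -- in particular confirming the overall sign and the Pochhammer $\left(a_{3}-1+\frac{x_{2}}{2}\right)_{2}$ structure of $B_{3}$.
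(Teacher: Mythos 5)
Your proposal follows essentially the same route as the paper: the authors likewise relabel $n_{1}\to n_{2}$, $x_{1}\to x_{2}$, $a_{1}\to a_{2}$, $a_{2}\to a_{3}$, $b_{1}\to b_{2}$, $b_{2}\to b_{3}$ in \eqref{*3}, multiply by the $x_{1}$-dependent prefactor (invariant because it depends on the shifted parameters only through $a_{2}+a_{3}$ and $b_{2}+b_{3}$), and conclude via \eqref{relation2}. Your caution about the sign of $B_{3}$ is well placed: the relabelled \eqref{*3} yields $B_{3}$ with a minus sign, whereas the theorem as printed has a plus, so one of the two signs in the paper appears to be a typographical slip.
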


\begin{proof}
After substituting $n_{1}\rightarrow n_{2}$, $x_{1}\rightarrow x_{2}$, $%
a_{1}\rightarrow a_{2}$, $a_{2}\rightarrow a_{3}$, $b_{1}\rightarrow b_{2}$
and $b_{2}\rightarrow b_{3}$ in relation \eqref{*3}, and then multiplying by
\begin{multline*}
\left( a_{2}+a_{3}+\frac{x_{1}}{2}\right) _{n_{2}} \\
\times \hyper{3}{2}{-n_{1} ,\
n_{2}+a_{2}+a_{3}+x_{1} /2,\ n_{1}+2n_{2} +a_{1}+a_{2}+a_{3}+b_{1}+b_{2}+b_{3}-1}{2n_{2}+a_{2}+a_{3}+b_{2}+b_{3},\ n_{2}+a_{1}+a_{2}+a_{3}}{1},
\end{multline*}
it follows from \eqref{relation2}.
\end{proof}

\begin{theorem}
For the family of the special function$\ _{2}S_{n_{1},n_{2}}\left(
x_{1},x_{2};a_{1},a_{2},a_{3},b_{1},b_{2},b_{3}\right) $, we have
\begin{multline}\label{104}
\left( B_{1}+C_{1}\right) \ _{2}S_{n_{1},n_{2}}\left(
x_{1},x_{2};a_{1},a_{2},a_{3},b_{1},b_{2},b_{3}\right)    \\
+\left( B_{2}+C_{2}\right) \ _{2}S_{n_{1},n_{2}}\left(
x_{1},x_{2};a_{1}+1,a_{2},a_{3},b_{1}-1,b_{2},b_{3}\right)    \\
+C_{3}\ _{2}S_{n_{1},n_{2}}\left(
x_{1},x_{2};a_{1}+2,a_{2},a_{3},b_{1}-2,b_{2},b_{3}\right) =0,
\end{multline}%
where
\begin{align*}
B_{1}+C_{1} &=-\frac{b_{1}-1+\frac{x_{1}}{2}}{n_{2}+a_{1}+a_{2}+a_{3}-1} \\
C_{3} &=-\frac{\left( n_{1}+n_{2}+a_{1}+a_{2}+a_{3}+1\right) \left(
n_{1}+n_{2}+b_{1}+b_{2}+b_{3}-2\right) \left( a_{1}+1-\frac{x_{1}}{2}\right)
}{\left( n_{2}+a_{1}+a_{2}+a_{3}-1\right) _{3}}, \\
B_{2}+C_{2} &=\frac{\left( 2n_{2}+2\left( a_{1}+a_{2}+a_{3}\right)
+1\right) \left( 2n_{2}+a_{2}+a_{3}+b_{1}+b_{2}+b_{3}-2+\frac{x_{1}}{2}%
\right) }{\left( n_{2}+a_{1}+a_{2}+a_{3}-1\right) _{2}} \\
&+\frac{n_{1}\left(
n_{1}+2n_{2}+a_{1}+a_{2}+a_{3}+b_{1}+b_{2}+b_{3}-1\right) }{\left(
n_{2}+a_{1}+a_{2}+a_{3}-1\right) _{2}} \\
&-\frac{\left( 2n_{2}+a_{1}+a_{2}+a_{3}+b_{1}+b_{2}+b_{3}-1\right) \left(
n_{2}+a_{2}+a_{3}+\frac{x_{1}}{2}\right) }{\left(
n_{2}+a_{1}+a_{2}+a_{3}-1\right) _{2}} \\
&-\frac{2n_{2}+a_{2}+a_{3}+b_{2}+b_{3}-1}{n_{2}+a_{1}+a_{2}+a_{3}-1}.
\end{align*}
\end{theorem}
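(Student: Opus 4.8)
The plan is to obtain \eqref{104} as the image of the univariate three-term relation \eqref{*4} under the embedding \eqref{relation1}, exactly as was done for \eqref{101} and \eqref{102}. The guiding observation is that in \eqref{104} the only parameters that move are $a_{1}$ and $b_{1}$ (shifted by $+1,+2$ and $-1,-2$ respectively), while $a_{2},a_{3},b_{2},b_{3}$ stay fixed; and in the factorization \eqref{relation1} these are precisely the parameters $a_{1}$ and $b_{1}$ occupying the first and third argument slots of the embedded univariate factor $_{1}S_{n_{1}}\left(x_{1};a_{1},a_{2}+a_{3}+n_{2},b_{1},b_{2}+b_{3}+n_{2}\right)$, whereas the accompanying $_{3}F_{2}$-in-$x_{2}$ and the prefactor $\left(a_{2}+a_{3}+\frac{x_{1}}{2}\right)_{n_{2}}$ depend on none of $a_{1},b_{1}$. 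This is exactly the shift pattern carried by \eqref{*4}, so \eqref{*4} is the correct univariate seed.

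First I would substitute $a_{2}\to n_{2}+a_{2}+a_{3}$ and $b_{2}\to n_{2}+b_{2}+b_{3}$ throughout \eqref{*4}; this turns each univariate term $_{1}S_{n_{1}}\left(x_{1};a_{1},a_{2},b_{1},b_{2}\right)$ (together with its $a_{1},b_{1}$ shifts) into precisely the univariate factor appearing on the right of \eqref{relation1}. Next I would multiply the resulting identity through by the common factor
$$
\left( a_{2}+a_{3}+\frac{x_{1}}{2}\right) _{n_{2}} \hyper{3}{2}{-n_{2} ,\
a_{3}+x_{2} /2,\ n_{2} +a_{2}+a_{3}+b_{2}+b_{3}-1}{a_{3}+b_{3},\ a_{2}+a_{3}}{1},
$$
which, being free of $a_{1}$ and $b_{1}$, is unaffected by all three shifts. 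By \eqref{relation1} each of the three products then collapses to the corresponding $_{2}S_{n_{1},n_{2}}$ with the shifted parameters, yielding the three terms of \eqref{104}.

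What remains is to confirm that the stated coefficients are exactly the images of those in \eqref{*4} under the same substitution. For $B_{1}+C_{1}$ the denominator $a_{1}+a_{2}-1$ becomes $n_{2}+a_{1}+a_{2}+a_{3}-1$, matching \eqref{104} immediately; for $C_{3}$ the factors $n_{1}+a_{1}+a_{2}+1$ and $n_{1}+b_{1}+b_{2}-2$ become $n_{1}+n_{2}+a_{1}+a_{2}+a_{3}+1$ and $n_{1}+n_{2}+b_{1}+b_{2}+b_{3}-2$ and the Pochhammer $(a_{1}+a_{2}-1)_{3}$ becomes $(n_{2}+a_{1}+a_{2}+a_{3}-1)_{3}$, again as stated. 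The only genuinely fiddly step, and the place where a slip is most likely, is $B_{2}+C_{2}$: here one must push the substitution through the composite blocks $a_{1}+a_{2}$, $a_{2}+b_{1}+b_{2}$, $a_{1}+a_{2}+b_{1}+b_{2}$, $a_{2}+b_{2}$, and the Pochhammer $(a_{1}+a_{2}-1)_{2}$, and check term by term that the four summands reproduce the expression in \eqref{104} (for instance $2(a_{1}+a_{2})+1\mapsto 2n_{2}+2(a_{1}+a_{2}+a_{3})+1$ and $a_{2}+b_{2}-1\mapsto 2n_{2}+a_{2}+a_{3}+b_{2}+b_{3}-1$). This is routine algebra once the substitution is applied consistently, so I expect no conceptual obstacle beyond careful bookkeeping.
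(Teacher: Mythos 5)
Your proposal is correct and follows exactly the paper's own proof: substitute $a_{2}\rightarrow n_{2}+a_{2}+a_{3}$ and $b_{2}\rightarrow n_{2}+b_{2}+b_{3}$ in \eqref{*4}, multiply by the $a_{1},b_{1}$-independent factor $\left(a_{2}+a_{3}+\frac{x_{1}}{2}\right)_{n_{2}}\,{}_{3}F_{2}(\cdots)$, and invoke \eqref{relation1}. The coefficient bookkeeping you flag as the delicate step does check out term by term against the stated $B_{2}+C_{2}$.
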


\begin{proof}
If we get $a_{2}\rightarrow n_{2}+a_{2}+a_{3}$ and $b_{2}\rightarrow
n_{2}+b_{2}+b_{3}$ in \eqref{*4} and then we multiply it by
$$
\left( a_{2}+a_{3}+\frac{x_{1}}{2}\right) _{n_{2}} \hyper{3}{2}{-n_{2} ,\
a_{3}+x_{2} /2,\ n_{2} +a_{2}+a_{3}+b_{2}+b_{3}-1}{a_{3}+b_{3},\ a_{2}+a_{3}}{1},
$$
we complete the proof from \eqref{relation1}.
\end{proof}

\begin{theorem}
The family of the special function$\ _{2}S_{n_{1},n_{2}}\left(
x_{1},x_{2};a_{1},a_{2},a_{3},b_{1},b_{2},b_{3}\right) $ verifies the
recurrence relation%
\begin{multline}\label{106}
\left( B_{1}+C_{1}\right) \ _{2}S_{n_{1},n_{2}}\left(
x_{1},x_{2};a_{1},a_{2},a_{3},b_{1},b_{2},b_{3}\right)    \\
+\left( B_{2}+C_{2}\right) \ _{2}S_{n_{1},n_{2}}\left(
x_{1},x_{2};a_{1},a_{2},a_{3},b_{1},b_{2}-1,b_{3}+1\right)    \\
+C_{3}\ _{2}S_{n_{1},n_{2}}\left(
x_{1},x_{2};a_{1},a_{2},a_{3},b_{1},b_{2}-2,b_{3}+2\right) =0,
\end{multline}%
where
\begin{align*}
B_{1}+C_{1} &=-\frac{b_{2}-1+\frac{x_{2}}{2}}{a_{3}+b_{3}-1}, \\
B_{2}+C_{2} &=\frac{\left( 2\left( a_{3}+b_{3}\right) +1\right) \left(
a_{2}+a_{3}+b_{2}-2+\frac{x_{2}}{2}\right) }{\left( a_{3}+b_{3}-1\right) _{2}%
}-\frac{a_{2}+a_{3}-1}{a_{3}+b_{3}-1} \\
&+\frac{n_{2}\left( n_{2}+a_{2}+a_{3}+b_{2}+b_{3}-1\right) -\left(
a_{2}+a_{3}+b_{2}+b_{3}-1\right) \left( a_{3}+\frac{x_{2}}{2}\right) }{%
\left( a_{3}+b_{3}-1\right) _{2}}, \\
C_{3} &=-\frac{\left( n_{2}+a_{2}+b_{2}-2\right) \left(
n_{2}+a_{3}+b_{3}+1\right) \left( b_{3}+1-\frac{x_{2}}{2}\right) }{\left(
a_{3}+b_{3}-1\right) _{3}}.
\end{align*}
\end{theorem}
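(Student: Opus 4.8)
The plan is to obtain \eqref{106} from the one-variable recurrence \eqref{*6} in exactly the way \eqref{103} was obtained from \eqref{*3}, using the factorization \eqref{relation2} of $_{2}S_{n_{1},n_{2}}$ through $_{1}S_{n_{2}}(x_{2};a_{2},a_{3},b_{2},b_{3})$. The choice of source is dictated by the shape of the shifts: in \eqref{106} only the pair $(b_{2},b_{3})$ is moved, by $(b_{2},b_{3})\mapsto(b_{2}-1,b_{3}+1)$, while $b_{1}$ and every $a_{i}$ stay fixed, and this is precisely the pattern of the lower-parameter relation \eqref{*6}, whose shifts are $(b_{1},b_{2})\mapsto(b_{1}-1,b_{2}+1)$.

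First I would relabel the indices in \eqref{*6} by $n_{1}\to n_{2}$, $x_{1}\to x_{2}$, $a_{1}\to a_{2}$, $a_{2}\to a_{3}$, $b_{1}\to b_{2}$, $b_{2}\to b_{3}$. Under this substitution $_{1}S_{n_{1}}(x_{1};a_{1},a_{2},b_{1},b_{2})$ turns into $_{1}S_{n_{2}}(x_{2};a_{2},a_{3},b_{2},b_{3})$, and the two shifted arguments become $_{1}S_{n_{2}}(x_{2};a_{2},a_{3},b_{2}-1,b_{3}+1)$ and $_{1}S_{n_{2}}(x_{2};a_{2},a_{3},b_{2}-2,b_{3}+2)$. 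A direct check then shows that the coefficients $B_{1}+C_{1}$, $B_{2}+C_{2}$, $C_{3}$ of \eqref{*6} are carried termwise onto the three coefficients displayed in \eqref{106}; this coefficient matching is the bulk of the routine work.

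Next I would multiply the relabelled identity through by the factor $\left(a_{2}+a_{3}+\frac{x_{1}}{2}\right)_{n_{2}}$ times the ${}_{3}F_{2}$ appearing on the right-hand side of \eqref{relation2}. The one point that genuinely needs justification, and the structural heart of the argument, is that this multiplier is common to all three terms. Indeed, the Pochhammer prefactor does not involve $b_{2}$ or $b_{3}$ at all, and in the ${}_{3}F_{2}$ of \eqref{relation2} the parameters $b_{2},b_{3}$ enter only through the sum $b_{2}+b_{3}$ (in the lower entry $2n_{2}+a_{2}+a_{3}+b_{2}+b_{3}$ and in the upper entry $n_{1}+2n_{2}+a_{1}+a_{2}+a_{3}+b_{1}+b_{2}+b_{3}-1$). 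Since the shift $(b_{2},b_{3})\mapsto(b_{2}-1,b_{3}+1)$ preserves $b_{2}+b_{3}$, this factor is invariant under the shift and is therefore genuinely the same multiplier for each of the three $_{1}S_{n_{2}}$ terms.

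Finally, applying \eqref{relation2} term by term recombines each product $\left(a_{2}+a_{3}+\frac{x_{1}}{2}\right)_{n_{2}}\,{}_{1}S_{n_{2}}(x_{2};a_{2},a_{3},b_{2}',b_{3}')\cdot{}_{3}F_{2}$ into $_{2}S_{n_{1},n_{2}}(x_{1},x_{2};a_{1},a_{2},a_{3},b_{1},b_{2}',b_{3}')$ for each of the pairs $(b_{2}',b_{3}')\in\{(b_{2},b_{3}),(b_{2}-1,b_{3}+1),(b_{2}-2,b_{3}+2)\}$, turning the relabelled \eqref{*6} into exactly \eqref{106}. I expect no real obstacle beyond the invariance of the common multiplier, which is immediate once one notices it depends on $b_{2},b_{3}$ only through $b_{2}+b_{3}$, together with the mechanical verification of the coefficient correspondence.
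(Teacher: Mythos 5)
Your proposal is correct and follows essentially the same route as the paper: relabel \eqref{*6} via $n_{1}\to n_{2}$, $x_{1}\to x_{2}$, $a_{1}\to a_{2}$, $a_{2}\to a_{3}$, $b_{1}\to b_{2}$, $b_{2}\to b_{3}$, multiply by the common factor $\left(a_{2}+a_{3}+\frac{x_{1}}{2}\right)_{n_{2}}$ times the ${}_{3}F_{2}$ from \eqref{relation2}, and recombine term by term. Your explicit observation that this multiplier depends on $b_{2},b_{3}$ only through the shift-invariant sum $b_{2}+b_{3}$ is the point the paper leaves implicit, and it is exactly the right justification.
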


\begin{proof}
After replacing $n_{1}\rightarrow n_{2}$, $x_{1}\rightarrow x_{2}$, $%
a_{1}\rightarrow a_{2}$, $a_{2}\rightarrow a_{3}$, $b_{1}\rightarrow b_{2}$
and $b_{2}\rightarrow b_{3}$ in relation \eqref{*6}, if the obtained identity
is multiplied by
\begin{multline*}
\left( a_{2}+a_{3}+\frac{x_{1}}{2}\right) _{n_{2}}\\
\times  \hyper{3}{2}{-n_{1} ,\
n_{2}+a_{2}+a_{3}+x_{1} /2,\ n_{1}+2n_{2} +a_{1}+a_{2}+a_{3}+b_{1}+b_{2}+b_{3}-1}{2n_{2}+a_{2}+a_{3}+b_{2}+b_{3},\ n_{2}+a_{1}+a_{2}+a_{3}}{1},
\end{multline*}
the proof is completed from \eqref{relation2}.
\end{proof}

\begin{theorem}
The family of the special function$\ _{2}S_{n_{1},n_{2}}\left(
x_{1},x_{2};a_{1},a_{2},a_{3},b_{1},b_{2},b_{3}\right) $ satisfies the
recurrence relation%
\begin{multline}\label{107}
\left( n_{2}+a_{1}+a_{2}+a_{3}\right) \ _{2}S_{n_{1},n_{2}}\left(
x_{1},x_{2};a_{1},a_{2},a_{3},b_{1},b_{2},b_{3}\right)   \\
-\left( n_{1}+2n_{2}+a_{1}+a_{2}+a_{3}+b_{1}+b_{2}+b_{3}-1\right) \
_{2}S_{n_{1},n_{2}}\left(
x_{1},x_{2};a_{1}+1,a_{2},a_{3},b_{1},b_{2},b_{3}\right)   \\
+\left( n_{1}+n_{2}+b_{1}+b_{2}+b_{3}-1\right) \ _{2}S_{n_{1},n_{2}}\left(
x_{1},x_{2};a_{1}+1,a_{2},a_{3},b_{1}-1,b_{2},b_{3}\right) =0.
\end{multline}
\end{theorem}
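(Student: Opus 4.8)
The plan is to derive \eqref{107} from the one-variable recurrence \eqref{*7} by the same substitution-and-multiplication device already used for the preceding theorems of this section, the link between the two families being the factorization \eqref{relation1}. First I would write down relation \eqref{*7} for $_{1}S_{n_{1}}\left(x_{1};a_{1},a_{2},b_{1},b_{2}\right)$ and apply the parameter replacements $a_{2}\rightarrow n_{2}+a_{2}+a_{3}$ and $b_{2}\rightarrow n_{2}+b_{2}+b_{3}$. Under these replacements the three one-variable functions appearing in \eqref{*7} become $_{1}S_{n_{1}}\left(x_{1};a_{1},n_{2}+a_{2}+a_{3},b_{1},n_{2}+b_{2}+b_{3}\right)$, then $_{1}S_{n_{1}}\left(x_{1};a_{1}+1,n_{2}+a_{2}+a_{3},b_{1}-1,n_{2}+b_{2}+b_{3}\right)$, and finally $_{1}S_{n_{1}}\left(x_{1};a_{1}+1,n_{2}+a_{2}+a_{3},b_{1},n_{2}+b_{2}+b_{3}\right)$.

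Next I would multiply the resulting identity through by the prefactor
\[
\left(a_{2}+a_{3}+\frac{x_{1}}{2}\right)_{n_{2}}\,\hyper{3}{2}{-n_{2},\ a_{3}+x_{2}/2,\ n_{2}+a_{2}+a_{3}+b_{2}+b_{3}-1}{a_{3}+b_{3},\ a_{2}+a_{3}}{1},
\]
which is exactly the $n_{2}$-dependent factor appearing in \eqref{relation1}. The decisive observation is that this factor depends only on $a_{2},a_{3},b_{2},b_{3},x_{1},x_{2},n_{2}$ and is \emph{independent} of $a_{1}$ and $b_{1}$; hence it is common to all three terms even after the shifts $a_{1}\rightarrow a_{1}+1$ and $b_{1}\rightarrow b_{1}-1$. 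By \eqref{relation1}, each product then collapses to a two-variable function, giving in turn $_{2}S_{n_{1},n_{2}}\left(x_{1},x_{2};a_{1},a_{2},a_{3},b_{1},b_{2},b_{3}\right)$, next $_{2}S_{n_{1},n_{2}}\left(x_{1},x_{2};a_{1}+1,a_{2},a_{3},b_{1}-1,b_{2},b_{3}\right)$, and last $_{2}S_{n_{1},n_{2}}\left(x_{1},x_{2};a_{1}+1,a_{2},a_{3},b_{1},b_{2},b_{3}\right)$.

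It then remains only to check that the scalar coefficients transform correctly. Tracking the three coefficients of \eqref{*7} under the substitution gives $a_{1}+a_{2}\mapsto n_{2}+a_{1}+a_{2}+a_{3}$, then $n_{1}+b_{1}+b_{2}-1\mapsto n_{1}+n_{2}+b_{1}+b_{2}+b_{3}-1$, and finally $n_{1}+a_{1}+a_{2}+b_{1}+b_{2}-1\mapsto n_{1}+2n_{2}+a_{1}+a_{2}+a_{3}+b_{1}+b_{2}+b_{3}-1$, which are precisely the coefficients multiplying the first, third, and (negated) second terms of \eqref{107}. I do not anticipate a genuine obstacle here: the whole argument is bookkeeping layered on the already-established Lemma \eqref{*7} together with \eqref{relation1}. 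The single point that requires care—and the nearest thing to a subtlety—is the independence of the multiplying prefactor from $a_{1}$ and $b_{1}$, since it is exactly this independence that permits the same factor to be extracted simultaneously from all three shifted terms and thereby turns the one-variable identity into the two-variable recurrence \eqref{107}.
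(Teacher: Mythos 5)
Your proposal is correct and follows exactly the paper's own route: substituting $a_{2}\rightarrow n_{2}+a_{2}+a_{3}$, $b_{2}\rightarrow n_{2}+b_{2}+b_{3}$ in \eqref{*7} and then invoking \eqref{relation1}, with the $a_{1}$- and $b_{1}$-independence of the common prefactor being the point that makes the collapse to two-variable functions work. The coefficient bookkeeping you carried out matches \eqref{107} term by term, so nothing further is needed.
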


\begin{proof}
Taking $a_{2}\rightarrow n_{2}+a_{2}+a_{3}$ and $b_{2}\rightarrow
n_{2}+b_{2}+b_{3}$ in relation \eqref{*7},  in view of the relation \eqref{relation1} we arrive at the desired relation.
\end{proof}

We now derive recurrence relations for the special functions $
_{r}S_{\mathbf{n}}\left( \mathbf{x};\mathbf{a},\mathbf{b}\right) $.
By taking into account \eqref{Func1}, we first give the following relationships between the special functions$\
_{r}S_{\mathbf{n}}\left( \mathbf{x};\mathbf{a},\mathbf{b}\right) $ as
follows

\begin{multline} \label{P1}
 _{r}S_{\mathbf{n}}\left( \mathbf{x};\mathbf{a},\mathbf{b}\right)\\
 = \hyper{3}{2}{-n_{r} ,\ n_{r}+\left \vert \mathbf{a}^{r}\right \vert +\left \vert \mathbf{b}^{r}\right \vert -1,\ a_{r+1}+
 \frac{x_{r}}{2}}{a_{r+1}+b_{r+1},\left \vert \mathbf{a}^{r}\right \vert }
{1} \prod \limits_{j=1}^{r-1}\left( \left \vert \mathbf{a}
^{j+1}\right \vert +\frac{x_{j}}{2}\right) _{n_{r}}\\
\times _{r-1}S_{n_{1},\dots ,n_{r-1}}\left(
x_{1},\dots ,x_{r-1};a_{1},\dots ,a_{r-1},a_{r}+a_{r+1}+n_{r},b_{1},\dots ,b_{r-1},b_{r}+b_{r+1}+n_{r}\right)
\end{multline}
or%
\begin{multline}\label{P2}
_{r}S_{\mathbf{n}}\left( \mathbf{x};\mathbf{a},\mathbf{b}\right)  =\left(
\left \vert \mathbf{a}^{2}\right \vert +\frac{x_{1}}{2}\right) _{\left \vert
\mathbf{n}^{2}\right \vert }~_{r-1}S_{n_{2},\dots ,n_{r}}\left(
x_{2},\dots ,x_{r}; \mathbf{a}^{2} ,\mathbf{b}%
^{2} \right)    \\
\times \hyper{3}{2}{-n_{1} ,\ n_{1}+2\left \vert \mathbf{n}%
^{2}\right \vert +\left \vert \mathbf{a}\right \vert +\left \vert \mathbf{b}%
\right \vert -1,\left \vert \mathbf{n}^{2}\right \vert +\left \vert \mathbf{a}%
^{2}\right \vert +\frac{x_{1}}{2}}{2\left \vert \mathbf{n}^{2}\right \vert
+\left \vert \mathbf{a}^{2}\right \vert +\left \vert \mathbf{b}^{2}\right \vert,\left \vert \mathbf{n}^{2}\right \vert +\left \vert \mathbf{a}\right \vert }{1} ,
\end{multline}%
for $r\geq 2$, where $\boldsymbol{n}:=\left(  n_{1},n_{2},\dots,n_{r}\right)  $,
 $\mathbf{a}:=\left(  a_{1},a_{2},\dots,a_{r+1}\right)  $ and $\mathbf{b}:=\left(  b_{1},b_{2},\dots,b_{r+1}\right)  $ also
\begin{align*}
\mathbf{a}^{j}  & =\left(  a_{j},\dots,a_{r+1}\right)  ,\  \  \  \ 1\leq j\leq r+1,\\
\mathbf{b}^{j}  & =\left(  b_{j},\dots,b_{r+1}\right)   ,\  \  \  \ 1\leq j\leq r+1 \\
\mathbf{n}^{j}  & =\left(  n_{j},\dots,n_{r}\right)  ,\  \  \  \ 1\leq j\leq r
\end{align*}
so $\left \vert \mathbf{a}^{j}\right \vert =a_{j}+a_{j+1}+\cdots+a_{r+1}$, $\left \vert \mathbf{b}^{j}\right \vert =b_{j}+b_{j+1}+\cdots+b_{r+1}$ and $\left \vert \mathbf{n}^{j}\right \vert =n_{j}+n_{j+1}+\dots+n_{r}$.\\

In order to derive the recurrence relations for the function $_{3}S_{n_{1},n_{2},n_{3}}$ depending on $\left(
x_{1},x_{2},x_{3};a_{1},a_{2},a_{3},a_{4},b_{1},b_{2},b_{3},b_{4}\right)$, it is enough to use Theorems 5.8-5.13 given for the function $_{2}S_{n_{1},n_{2}}\left(
x_{1},x_{2};a_{1},a_{2},a_{3},b_{1},b_{2},b_{3}\right)$ by means of the relationships \eqref{P1} and \eqref{P2}. When similar process is repeated consecutively, the following results appear.

\begin{theorem}
For $r\geq 1$, the family of the special function$\ _{r}S_{\mathbf{n}}\left(
\mathbf{x};\mathbf{a},\mathbf{b}\right) $ satisfies the following recurrence
relation%
\begin{multline*}
\left( B_{1}+C_{1}\right) \ _{r}S_{\mathbf{n}}\left( \mathbf{x};\mathbf{a},%
\mathbf{b}\right) +\left( B_{2}+C_{2}\right) \
_{r}S_{n_{1}+1,n_{2},\dots ,n_{r}}\left( \mathbf{x};\mathbf{a}%
,b_{1}-1,b_{2},\dots ,b_{r+1}\right)  \\
+B_{3}\ _{r}S_{n_{1}+2,n_{2},\dots ,n_{r}}\left( \mathbf{x};\mathbf{a}%
,b_{1}-2,b_{2},\dots ,b_{r+1}\right) =0,
\end{multline*}%
where
\begin{eqnarray*}
B_{1}+C_{1} &=&\frac{b_{1}-1+\frac{x_{1}}{2}}{n_{1}}, \\
B_{2}+C_{2} &=&\frac{\left( 2\left \vert \mathbf{n}^{2}\right \vert
+\left \vert \mathbf{a}^{2}\right \vert +\left \vert \mathbf{b}^{2}\right \vert
\right) \left( \left \vert \mathbf{n}^{2}\right \vert +\left \vert \mathbf{a}%
\right \vert \right) -\left( n_{1}+2\left \vert \mathbf{n}^{2}\right \vert
+\left \vert \mathbf{a}\right \vert +\left \vert \mathbf{b}\right \vert
-1\right) \left( \left \vert \mathbf{n}^{2}\right \vert +\left \vert \mathbf{a}%
^{2}\right \vert +\frac{x_{1}}{2}\right) }{\left( n_{1}\right) _{2}} \\
&&+\frac{n_{1}+3\left \vert \mathbf{n}^{2}\right \vert +a_{1}+2\left \vert
\mathbf{a}^{2}\right \vert -b_{1}+\left \vert \mathbf{b}^{2}\right \vert +2-%
\frac{x_{1}}{2}}{n_{1}} \\
B_{3} &=&-\frac{\left( n_{1}+2\left \vert \mathbf{n}^{2}\right \vert
+\left \vert \mathbf{a}^{2}\right \vert +\left \vert \mathbf{b}^{2}\right \vert
+1\right) \left( \left \vert \mathbf{n}\right \vert +\left \vert \mathbf{a}%
\right \vert +1\right) }{\left( n_{1}\right) _{2}}.
\end{eqnarray*}
\end{theorem}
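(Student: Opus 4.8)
The plan is to reduce the $r$-variable recurrence to the one-variable relation \eqref{*1} by peeling off the first index through the factorization \eqref{P2}. Writing ${}_{r}S_{\mathbf{n}}(\mathbf{x};\mathbf{a},\mathbf{b})$ in the form \eqref{P2}, I would isolate the $_{3}F_{2}$ factor occurring there, call it $\Phi$, so that
\[
{}_{r}S_{\mathbf{n}}\left( \mathbf{x};\mathbf{a},\mathbf{b}\right)=\left( \left \vert \mathbf{a}^{2}\right \vert +\tfrac{x_{1}}{2}\right) _{\left \vert \mathbf{n}^{2}\right \vert }\,{}_{r-1}S_{n_{2},\dots ,n_{r}}\left(x_{2},\dots ,x_{r};\mathbf{a}^{2},\mathbf{b}^{2}\right)\,\Phi.
\]
The decisive structural point is that the Pochhammer prefactor and the ${}_{r-1}S$ factor involve only $\left \vert \mathbf{n}^{2}\right \vert$, $\left \vert \mathbf{a}^{2}\right \vert$, $\mathbf{a}^{2}$ and $\mathbf{b}^{2}$, none of which changes when the pair $(n_{1},b_{1})$ is shifted; hence the shifts $(n_{1},b_{1})\mapsto(n_{1}+k,b_{1}-k)$ act \emph{exclusively} on $\Phi$. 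This is precisely the mechanism already exploited in the proof of Theorem~5.8 for $r=2$.

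Next I would identify $\Phi$ with a one-variable function. Matching the upper and lower parameters of $\Phi$ against the definition \eqref{1boyut}, one finds
\[
\Phi={}_{1}S_{n_{1}}\left( x_{1};\,a_{1},\ \left \vert \mathbf{n}^{2}\right \vert+\left \vert \mathbf{a}^{2}\right \vert,\ b_{1},\ \left \vert \mathbf{n}^{2}\right \vert+\left \vert \mathbf{b}^{2}\right \vert\right),
\]
where I use $\left \vert \mathbf{a}\right \vert=a_{1}+\left \vert \mathbf{a}^{2}\right \vert$ and $\left \vert \mathbf{b}\right \vert=b_{1}+\left \vert \mathbf{b}^{2}\right \vert$ to check that the second upper entry $n_{1}+2\left \vert \mathbf{n}^{2}\right \vert+\left \vert \mathbf{a}\right \vert+\left \vert \mathbf{b}\right \vert-1$ becomes $n_{1}+a_{1}+a_{2}+b_{1}+b_{2}-1$ under the substitution $a_{2}\mapsto\left \vert \mathbf{n}^{2}\right \vert+\left \vert \mathbf{a}^{2}\right \vert$, $b_{2}\mapsto\left \vert \mathbf{n}^{2}\right \vert+\left \vert \mathbf{b}^{2}\right \vert$, the other four entries matching directly. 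I would then apply Lemma~5.2, that is relation \eqref{*1}, after exactly these substitutions. This yields a three-term identity among the values of $\Phi$ at $(n_{1},b_{1})$, $(n_{1}+1,b_{1}-1)$ and $(n_{1}+2,b_{1}-2)$.

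To finish, I would multiply that identity by the inert factor $\left(\left \vert \mathbf{a}^{2}\right \vert +\tfrac{x_{1}}{2}\right)_{\left \vert \mathbf{n}^{2}\right \vert}\,{}_{r-1}S_{n_{2},\dots,n_{r}}(x_{2},\dots,x_{r};\mathbf{a}^{2},\mathbf{b}^{2})$ and re-read each product via \eqref{P2}, turning the three $\Phi$-values into ${}_{r}S_{\mathbf{n}}$, ${}_{r}S_{n_{1}+1,n_{2},\dots,n_{r}}(\mathbf{x};\mathbf{a},b_{1}-1,\dots,b_{r+1})$ and ${}_{r}S_{n_{1}+2,n_{2},\dots,n_{r}}(\mathbf{x};\mathbf{a},b_{1}-2,\dots,b_{r+1})$, which is the asserted recurrence. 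The stated coefficients then follow by inserting $a_{2}=\left \vert \mathbf{n}^{2}\right \vert+\left \vert \mathbf{a}^{2}\right \vert$ and $b_{2}=\left \vert \mathbf{n}^{2}\right \vert+\left \vert \mathbf{b}^{2}\right \vert$ into the $B_{1}+C_{1}$, $B_{2}+C_{2}$, $B_{3}$ of Lemma~5.2; for example $n_{1}+a_{2}+b_{2}+1=n_{1}+2\left \vert \mathbf{n}^{2}\right \vert+\left \vert \mathbf{a}^{2}\right \vert+\left \vert \mathbf{b}^{2}\right \vert+1$ and $n_{1}+a_{1}+a_{2}+1=\left \vert \mathbf{n}\right \vert+\left \vert \mathbf{a}\right \vert+1$ reproduce $B_{3}$, and the same elimination recovers $B_{1}+C_{1}$ and $B_{2}+C_{2}$. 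The only labor is this parameter bookkeeping, and the sole genuinely delicate point is the inertness claim of the first paragraph, namely that \eqref{P2} truly confines all $(n_{1},b_{1})$-dependence to $\Phi$; once that is granted there is no analytic obstacle, since the full hypergeometric content is already carried by \eqref{*1}.
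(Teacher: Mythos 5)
Your argument is correct, and it does yield the stated coefficients: substituting $a_{2}\to\left\vert \mathbf{n}^{2}\right\vert+\left\vert \mathbf{a}^{2}\right\vert$ and $b_{2}\to\left\vert \mathbf{n}^{2}\right\vert+\left\vert \mathbf{b}^{2}\right\vert$ in Lemma 5.2 sends $n_{1}+a_{2}+b_{2}+1$ to $n_{1}+2\left\vert \mathbf{n}^{2}\right\vert+\left\vert \mathbf{a}^{2}\right\vert+\left\vert \mathbf{b}^{2}\right\vert+1$ and $n_{1}+a_{1}+a_{2}+1$ to $\left\vert \mathbf{n}\right\vert+\left\vert \mathbf{a}\right\vert+1$, and similarly for $B_{1}+C_{1}$ and $B_{2}+C_{2}$. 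Your ``inertness'' point is also sound: in \eqref{P2} the Pochhammer prefactor and the factor $_{r-1}S_{n_{2},\dots,n_{r}}$ involve only $\mathbf{n}^{2}$, $\mathbf{a}^{2}$, $\mathbf{b}^{2}$, and the second upper parameter $n_{1}+2\left\vert \mathbf{n}^{2}\right\vert+\left\vert \mathbf{a}\right\vert+\left\vert \mathbf{b}\right\vert-1$ of $\Phi$ is invariant under $(n_{1},b_{1})\mapsto(n_{1}+k,b_{1}-k)$, so the three shifted functions genuinely share a common cofactor and your identification $\Phi={}_{1}S_{n_{1}}\left(x_{1};a_{1},\left\vert \mathbf{n}^{2}\right\vert+\left\vert \mathbf{a}^{2}\right\vert,b_{1},\left\vert \mathbf{n}^{2}\right\vert+\left\vert \mathbf{b}^{2}\right\vert\right)$ checks out against \eqref{1boyut}. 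The route differs from the paper's in its choice of decomposition: the paper proves this theorem from \eqref{P1}, peeling off the \emph{last} index $n_{r}$ and applying the two-variable relation \eqref{101} consecutively, i.e.\ an induction on $r$ whose base is Theorem 5.8, whereas you use \eqref{P2} to peel off the \emph{first} index and land directly on the one-variable Lemma 5.2 in a single step. Both rest on the same contiguous relation \eqref{H1}; yours buys a shorter, induction-free argument that makes explicit why only one hypergeometric factor moves, at the modest cost of verifying the parameter match for $\Phi$, which you carry out correctly.
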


\begin{proof}
In view of the relation \eqref{P1}, if we apply the relation \eqref{101} consecutively by taking $a_{r}\rightarrow
n_{r}+a_{r}+a_{r+1}$ and $b_{r}\rightarrow n_{r}+b_{r}+b_{r+1}$, we obtain
the above relation.
\end{proof}

\begin{theorem}
For $r\geq 1$, the family of the special function$\ _{r}S_{\mathbf{n}}\left(
\mathbf{x};\mathbf{a},\mathbf{b}\right) $ verifies the following recurrence
relation%
\begin{multline*}
\left( B_{1}+C_{1}\right) \ _{r}S_{\mathbf{n}}\left( \mathbf{x};\mathbf{a},%
\mathbf{b}\right) +\left( B_{2}+C_{2}\right) \ _{r}S_{\mathbf{n}}\left(
\mathbf{x};\mathbf{a},b_{1}-1,b_{2},\dots ,b_{r+1}\right)  \\
+B_{3}\ _{r}S_{\mathbf{n}}\left( \mathbf{x};\mathbf{a},b_{1}-2,b_{2},\dots ,b_{r+1}%
\right) =0,
\end{multline*}%
where%
\begin{align*}
B_{1}+C_{1} &=-\frac{b_{1}-1+\frac{x_{1}}{2}}{n_{1}+2\left \vert \mathbf{n}%
^{2}\right \vert +\left \vert \mathbf{a}\right \vert +\left \vert \mathbf{b}%
\right \vert -1}, \\
B_{2}+C_{2} &=\frac{\left( 2\left \vert \mathbf{n}^{2}\right \vert
+\left \vert \mathbf{a}^{2}\right \vert +\left \vert \mathbf{b}^{2}\right \vert
\right) \left( \left \vert \mathbf{n}^{2}\right \vert +\left \vert \mathbf{a}%
\right \vert \right) +n_{1}\left( \left \vert \mathbf{n}^{2}\right \vert
+\left \vert \mathbf{a}^{2}\right \vert +\frac{x_{1}}{2}\right) }{\left(
n_{1}+2\left \vert \mathbf{n}^{2}\right \vert +\left \vert \mathbf{a}%
\right \vert +\left \vert \mathbf{b}\right \vert -2\right) _{2}} \\
&+\frac{n_{1}-\left \vert \mathbf{n}^{2}\right \vert -\left \vert \mathbf{a}%
^{2}\right \vert +2b_{1}-3+\frac{x_{1}}{2}}{n_{1}+2\left \vert \mathbf{n}%
^{2}\right \vert +\left \vert \mathbf{a}\right \vert +\left \vert \mathbf{b}%
\right \vert -1}, \\
B_{3} &=-\frac{\left( n_{1}+a_{1}+b_{1}-2\right) \left( \left \vert \mathbf{n%
}\right \vert +\left \vert \mathbf{b}\right \vert -2\right) }{\left(
n_{1}+2\left \vert \mathbf{n}^{2}\right \vert +\left \vert \mathbf{a}%
\right \vert +\left \vert \mathbf{b}\right \vert -2\right) _{2}}.
\end{align*}
\end{theorem}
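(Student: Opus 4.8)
The plan is to reduce the $r$-dimensional identity to the one-variable recurrence \eqref{*2} by isolating the entire dependence on $b_{1}$ inside a single factor ${}_{1}S_{n_{1}}$. The key structural observation is that, in the factorization \eqref{P2}, the parameter $b_{1}$ occurs only in the last ${}_{3}F_{2}$ (the one carrying the variable $x_{1}$): the Pochhammer symbol $\left(\left\vert\mathbf{a}^{2}\right\vert+\tfrac{x_{1}}{2}\right)_{\left\vert\mathbf{n}^{2}\right\vert}$ contains no $b$'s at all, and the factor ${}_{r-1}S_{n_{2},\dots,n_{r}}(x_{2},\dots,x_{r};\mathbf{a}^{2},\mathbf{b}^{2})$ depends on $\mathbf{b}$ only through $\mathbf{b}^{2}=(b_{2},\dots,b_{r+1})$. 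Comparing the remaining ${}_{3}F_{2}$ in \eqref{P2} with the defining series \eqref{1boyut}, I would identify it as
\[
{}_{1}S_{n_{1}}\left(x_{1};\,a_{1},\ \left\vert\mathbf{n}^{2}\right\vert+\left\vert\mathbf{a}^{2}\right\vert,\ b_{1},\ \left\vert\mathbf{n}^{2}\right\vert+\left\vert\mathbf{b}^{2}\right\vert\right),
\]
the parameter check being immediate from $\left\vert\mathbf{a}\right\vert=a_{1}+\left\vert\mathbf{a}^{2}\right\vert$ and $\left\vert\mathbf{b}\right\vert=b_{1}+\left\vert\mathbf{b}^{2}\right\vert$.

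With this identification in hand, I would apply the one-variable recurrence \eqref{*2} to this ${}_{1}S_{n_{1}}$ factor under the substitution $a_{2}\mapsto\left\vert\mathbf{n}^{2}\right\vert+\left\vert\mathbf{a}^{2}\right\vert$ and $b_{2}\mapsto\left\vert\mathbf{n}^{2}\right\vert+\left\vert\mathbf{b}^{2}\right\vert$, leaving $a_{1}$, $b_{1}$, $n_{1}$ and $x_{1}$ untouched. Because \eqref{*2} shifts only the slot $b_{1}\mapsto b_{1}-1,\,b_{1}-2$ while fixing the remaining arguments, it delivers a three-term relation among the factors ${}_{1}S_{n_{1}}$ at $b_{1}$, $b_{1}-1$ and $b_{1}-2$. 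Multiplying this relation through by the $b_{1}$-free common factor $\left(\left\vert\mathbf{a}^{2}\right\vert+\tfrac{x_{1}}{2}\right)_{\left\vert\mathbf{n}^{2}\right\vert}\,{}_{r-1}S_{n_{2},\dots,n_{r}}(x_{2},\dots,x_{r};\mathbf{a}^{2},\mathbf{b}^{2})$ and reading \eqref{P2} backwards term by term, each product folds back into ${}_{r}S_{\mathbf{n}}$ evaluated at the corresponding shift of $b_{1}$, which is exactly the asserted recurrence.

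The only genuine work is the coefficient bookkeeping: I must verify that the coefficients of \eqref{*2}, after the substitution $a_{2}\mapsto\left\vert\mathbf{n}^{2}\right\vert+\left\vert\mathbf{a}^{2}\right\vert$, $b_{2}\mapsto\left\vert\mathbf{n}^{2}\right\vert+\left\vert\mathbf{b}^{2}\right\vert$, collapse to the stated $B_{1}+C_{1}$, $B_{2}+C_{2}$, $B_{3}$. This is routine once the right groupings are spotted; for instance the main denominator $n_{1}+a_{1}+a_{2}+b_{1}+b_{2}-1$ becomes $n_{1}+2\left\vert\mathbf{n}^{2}\right\vert+\left\vert\mathbf{a}\right\vert+\left\vert\mathbf{b}\right\vert-1$, the factor $n_{1}+a_{1}+b_{1}-2$ in $B_{3}$ stays unchanged as it involves neither $a_{2}$ nor $b_{2}$, and $n_{1}+b_{1}+b_{2}-2$ turns into $\left\vert\mathbf{n}\right\vert+\left\vert\mathbf{b}\right\vert-2$; the remaining pieces reduce in the same mechanical fashion. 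I expect no conceptual difficulty here, only care in collecting terms.

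Finally, I would note that this argument is non-inductive and simultaneously valid for every $r\geq1$: when $r=1$ it is nothing but \eqref{*2} itself (all symbols carrying the superscript $2$ being empty), while for $r=2$ it reproduces relation \eqref{102}. Equivalently, the same identity can be reached inductively by peeling off the last index through \eqref{P1} and applying \eqref{102} with the merges $a_{r}\mapsto n_{r}+a_{r}+a_{r+1}$, $b_{r}\mapsto n_{r}+b_{r}+b_{r+1}$ performed consecutively, exactly in the manner of the preceding theorem.
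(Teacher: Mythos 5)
Your proof is correct and follows essentially the same route as the paper: both rest on the factorization \eqref{P2}, which confines the entire $b_{1}$-dependence of $_{r}S_{\mathbf{n}}$ to the single ${}_{3}F_{2}$ factor carrying $x_{1}$, and reduce the claim to the univariate recurrence \eqref{*2} under the substitutions $a_{2}\mapsto\left\vert\mathbf{n}^{2}\right\vert+\left\vert\mathbf{a}^{2}\right\vert$, $b_{2}\mapsto\left\vert\mathbf{n}^{2}\right\vert+\left\vert\mathbf{b}^{2}\right\vert$ (the paper phrases this as applying the two-variable relation \eqref{102} successively, but \eqref{102} is itself \eqref{*2} with merged parameters, so the substance is identical). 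Your identification of that ${}_{3}F_{2}$ as ${}_{1}S_{n_{1}}\left(x_{1};a_{1},\left\vert\mathbf{n}^{2}\right\vert+\left\vert\mathbf{a}^{2}\right\vert,b_{1},\left\vert\mathbf{n}^{2}\right\vert+\left\vert\mathbf{b}^{2}\right\vert\right)$ and the resulting coefficient transcriptions all check out.
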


\begin{proof}
If we apply the recurrence relation \eqref{102} successively by getting $n_{j}\rightarrow n_{j+1}$, $%
x_{j}\rightarrow x_{j+1}$, $a_{j}\rightarrow a_{j+1}$ and $b_{j}\rightarrow
b_{j+1}$ for $j=1,2,\dots ,r-1$, we obtain the required result in view of the relation \eqref{P2}.
\end{proof}

\begin{theorem}
For the family of the special function$\ _{r}S_{\mathbf{n}}\left(
\mathbf{x};\mathbf{a},\mathbf{b}\right) $, we get
\begin{multline*}
\left( B_{2}+C_{2}\right) \ _{r}S_{\mathbf{n}}\left(\mathbf{x}
;a_{1},\dots ,a_{r-1},a_{r}+1,a_{r+1}-1,b_{1},\dots ,b_{r-1},b_{r}-1,b_{r+1}+1\right)  \\
+B_{3}\ _{r}S_{\mathbf{n}}\left( \mathbf{x};a_{1},\dots ,a_{r-1},a_{r}+2,a_{r+1}-2,b_{1},\dots ,b_{r-1},b_{r}-2,b_{r+1}+2%
\right) \\
+\left( B_{1}+C_{1}\right) \ _{r}S_{\mathbf{n}}\left( \mathbf{x};\mathbf{a},\mathbf{b}\right) =0,
\end{multline*}
for $r\geq 1$ where
\begin{align*}
B_{1}+C_{1} &=-\frac{b_{r}-1+\frac{x_{r}}{2}}{a_{r+1}+\frac{x_{r}}{2}}, \\
B_{2}+C_{2} &=\frac{\left \vert \mathbf{a}^{r}\right \vert \left(
a_{r+1}+b_{r+1}\right) +n_{r}\left( n_{r}+\left \vert \mathbf{a}%
^{r}\right \vert +\left \vert \mathbf{b}^{r}\right \vert -1\right) }{\left(
a_{r+1}+\frac{x_{r}}{2}-1\right) _{2}} \\
&-\frac{\left \vert \mathbf{a}^{r}\right \vert -b_{r}+b_{r+1}+2-x_{r}}{%
a_{r+1}+\frac{x_{r}}{2}}, \\
B_{3} &=-\frac{\left( a_{r}-\frac{x_{r}}{2}+1\right) \left( b_{r+1}-\frac{%
x_{r}}{2}+1\right) }{\left( a_{r+1}+\frac{x_{r}}{2}-1\right) _{2}}.
\end{align*}
\end{theorem}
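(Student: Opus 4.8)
The plan is to reduce the assertion, for every $r$, to the one–dimensional relation \eqref{*3} satisfied by ${}_1S_{n_r}$, using the factorization \eqref{P1} to peel off the innermost block $(x_r;a_r,a_{r+1},b_r,b_{r+1})$. The first point to record is that the ${}_3F_2$ occurring in \eqref{P1} is precisely ${}_1S_{n_r}(x_r;a_r,a_{r+1},b_r,b_{r+1})$: comparing with the definition \eqref{1boyut} and using $|\mathbf{a}^r|=a_r+a_{r+1}$ and $|\mathbf{b}^r|=b_r+b_{r+1}$, its numerator parameters $-n_r,\ n_r+|\mathbf{a}^r|+|\mathbf{b}^r|-1,\ a_{r+1}+x_r/2$ and denominator parameters $a_{r+1}+b_{r+1},\ |\mathbf{a}^r|$ coincide with those of ${}_1S_{n_r}$. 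Thus \eqref{P1} can be written as
\[
{}_{r}S_{\mathbf{n}}(\mathbf{x};\mathbf{a},\mathbf{b})={}_{1}S_{n_r}\bigl(x_r;a_r,a_{r+1},b_r,b_{r+1}\bigr)\,\Pi,
\]
where $\Pi:=\prod_{j=1}^{r-1}\bigl(|\mathbf{a}^{j+1}|+x_j/2\bigr)_{n_r}\cdot{}_{r-1}S_{n_1,\dots,n_{r-1}}(\dots)$ gathers every factor that does not belong to the last block.

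The decisive step is to verify that $\Pi$ is left invariant by the parameter shift $a_r\mapsto a_r+k$, $a_{r+1}\mapsto a_{r+1}-k$, $b_r\mapsto b_r-k$, $b_{r+1}\mapsto b_{r+1}+k$ for $k\in\{0,1,2\}$. Every ingredient of $\Pi$ depends on $a_r,a_{r+1}$ (resp. $b_r,b_{r+1}$) only through the sums $a_r+a_{r+1}$ and $b_r+b_{r+1}$: for $1\le j\le r-1$ the exponent $|\mathbf{a}^{j+1}|=a_{j+1}+\cdots+a_{r+1}$ contains both $a_r$ and $a_{r+1}$, while the parameters fed into ${}_{r-1}S$ are $a_1,\dots,a_{r-1},a_r+a_{r+1}+n_r$ together with $b_1,\dots,b_{r-1},b_r+b_{r+1}+n_r$. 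Since the shift preserves $a_r+a_{r+1}$ and $b_r+b_{r+1}$ (and does not touch $n_r$), all these quantities are unchanged, so $\Pi$ is a common factor for the three functions occurring in the recurrence.

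Granting this, I would take \eqref{*3} under the substitution $x_1\to x_r$, $a_1\to a_r$, $a_2\to a_{r+1}$, $b_1\to b_r$, $b_2\to b_{r+1}$, $n_1\to n_r$; this is a three–term relation among ${}_1S_{n_r}$ evaluated at the shifts $k=0,1,2$. Multiplying it by $\Pi$ and reading each product backwards through the (shifted) factorization \eqref{P1}, every term ${}_1S_{n_r}(\text{shift }k)\,\Pi$ becomes ${}_{r}S_{\mathbf{n}}$ with $a_r,a_{r+1},b_r,b_{r+1}$ shifted by $k$, which is exactly the claimed identity; the coefficients $B_1+C_1$, $B_2+C_2$, $B_3$ are produced by the same substitution in \eqref{*3}, under which $a_2+x_1/2\to a_{r+1}+x_r/2$, $a_1+a_2\to|\mathbf{a}^r|$, and $n_1+a_1+a_2+b_1+b_2-1\to n_r+|\mathbf{a}^r|+|\mathbf{b}^r|-1$, matching the stated expressions. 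For $r=1$ the product $\Pi$ is empty and the statement is literally \eqref{*3}; for $r\ge2$ the same conclusion may alternatively be reached by stripping $x_1,\dots,x_{r-2}$ with \eqref{P2} and invoking \eqref{103}, in accordance with the reduction advertised before the theorem. I expect no genuine obstacle beyond the invariance check for $\Pi$ and the routine verification that the surviving coefficients agree with the stated $B_i+C_i$; both are mechanical once \eqref{P1} is recognised as an ${}_1S_{n_r}$ times an invariant factor.
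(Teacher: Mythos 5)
Your argument is correct, and it reaches the theorem by a slightly different route than the paper. The paper's proof peels off the \emph{first} variables: it applies \eqref{P2} successively to reduce $_{r}S_{\mathbf{n}}$ to a $_{2}S$ block in $(x_{r-1},x_r)$ and then invokes the two-variable recurrence \eqref{103}. You instead peel off the \emph{last} block in a single step via \eqref{P1}, recognise the surviving $_{3}F_{2}$ as $_{1}S_{n_r}(x_r;a_r,a_{r+1},b_r,b_{r+1})$, and apply the one-variable relation \eqref{*3} directly. Both proofs rest on the same pivotal observation, which you state explicitly and the paper leaves implicit: the factors not carrying the index $r$ depend on $a_r,a_{r+1}$ and $b_r,b_{r+1}$ only through the sums $a_r+a_{r+1}$ and $b_r+b_{r+1}$ (via $\left\vert\mathbf{a}^{j+1}\right\vert$ for $j\leq r-1$ and via the contracted parameters $a_r+a_{r+1}+n_r$, $b_r+b_{r+1}+n_r$ fed into $_{r-1}S$), so they are invariant under the shift $k\mapsto(a_r+k,a_{r+1}-k,b_r-k,b_{r+1}+k)$. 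Your route is the more economical one: it needs one factorization instead of $r-2$ successive applications of \eqref{P2}, it reduces to Lemma 5.4 rather than to Theorem 5.10 (which the paper itself derives from \eqref{*3} through \eqref{relation2}), and it handles $r=1$ as the degenerate case where the invariant factor is empty. The substituted coefficients $a_2+x_1/2\to a_{r+1}+x_r/2$, $a_1+a_2\to\left\vert\mathbf{a}^{r}\right\vert$, $n_1+a_1+a_2+b_1+b_2-1\to n_r+\left\vert\mathbf{a}^{r}\right\vert+\left\vert\mathbf{b}^{r}\right\vert-1$ do match the stated $B_1+C_1$, $B_2+C_2$, $B_3$, so nothing is missing.
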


\begin{proof}
It is enough to consider the relation \eqref{P2} and use the recurrence relation \eqref{103} successively.
\end{proof}
Similarly, when the relations \eqref{104} and \eqref{106}, respectively, are applied consecutively in \eqref{P1} and \eqref{P2}, respectively, we can give next two theorems.
\begin{theorem}
For the family of the special function$\ _{r}S_{\mathbf{n}}\left(
\mathbf{x};\mathbf{a},\mathbf{b}\right) $,
\begin{multline*}
\left( B_{1}+C_{1}\right) \ _{r}S_{\mathbf{n}}\left( \mathbf{x};\mathbf{a},\mathbf{b}\right) +\left( B_{2}+C_{2}\right) \ _{r}S_{\mathbf{n}}\left(
\mathbf{x};a_{1}+1,a_{2},\dots ,a_{r+1},b_{1}-1,b_{2},\dots ,b_{r+1}\right)  \\
+C_{3}\ _{r}S_{\mathbf{n}}\left( \mathbf{x};a_{1}+2,a_{2},\dots ,a_{r+1},b_{1}-2,b_{2},\dots ,b_{r+1}\right) =0,
\end{multline*}%
holds for $r\geq 1$ where
\begin{align*}
B_{1}+C_{1} &=-\frac{b_{1}-1+\frac{x_{1}}{2}}{\left \vert \mathbf{n}%
^{2}\right \vert +\left \vert \mathbf{a}\right \vert -1}, \\
B_{2}+C_{2} &=\frac{\left( 2\left \vert \mathbf{n}^{2}\right \vert
+2\left \vert \mathbf{a}\right \vert +1\right) \left( 2\left \vert \mathbf{n}%
^{2}\right \vert +\left \vert \mathbf{a}^{2}\right \vert +\left \vert \mathbf{b}%
\right \vert -2+\frac{x_{1}}{2}\right) +n_{1}\left( n_{1}+2\left \vert \mathbf{%
n}^{2}\right \vert +\left \vert \mathbf{a}\right \vert +\left \vert \mathbf{b}%
\right \vert -1\right) }{{\left( \left \vert \mathbf{n}^{2}\right \vert +\left \vert
\mathbf{a}\right \vert -1\right) _{2}}} \\
&-\frac{\left( 2\left \vert \mathbf{n}^{2}\right \vert +\left \vert \mathbf{a}%
\right \vert +\left \vert \mathbf{b}\right \vert -1\right) \left( \left \vert
\mathbf{n}^{2}\right \vert +\left \vert \mathbf{a}^{2}\right \vert +\frac{x_{1}%
}{2}\right) }{\left( \left \vert \mathbf{n}^{2}\right \vert +\left \vert
\mathbf{a}\right \vert -1\right) _{2}}-\frac{2\left \vert \mathbf{n}%
^{2}\right \vert +\left \vert \mathbf{a}^{2}\right \vert +\left \vert \mathbf{b}%
^{2}\right \vert -1}{\left \vert \mathbf{n}^{2}\right \vert +\left \vert \mathbf{%
a}\right \vert -1}, \\
C_{3} &=-\frac{\left( \left \vert \mathbf{n}\right \vert +\left \vert \mathbf{a%
}\right \vert +1\right) \left( \left \vert \mathbf{n}\right \vert +\left \vert
\mathbf{b}\right \vert -2\right) \left( a_{1}-\frac{x_{1}}{2}+1\right) }{%
\left( \left \vert \mathbf{n}^{2}\right \vert +\left \vert \mathbf{a}%
\right \vert -1\right) _{3}}.
\end{align*}
\end{theorem}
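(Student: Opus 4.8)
The plan is to reduce the asserted $r$-variable three-term relation to the one-variable recurrence \eqref{*4}. Starting from the factorization \eqref{P2}, observe that the two leading factors $\left(|\mathbf{a}^{2}|+\frac{x_{1}}{2}\right)_{|\mathbf{n}^{2}|}$ and ${}_{r-1}S_{n_{2},\dots,n_{r}}\left(x_{2},\dots,x_{r};\mathbf{a}^{2},\mathbf{b}^{2}\right)$ depend only on $\mathbf{a}^{2}=(a_{2},\dots,a_{r+1})$ and $\mathbf{b}^{2}=(b_{2},\dots,b_{r+1})$, hence are invariant under the shifts $a_{1}\to a_{1}+1$, $b_{1}\to b_{1}-1$. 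Consequently $a_{1}$ and $b_{1}$ enter ${}_{r}S_{\mathbf{n}}$ only through the single ${}_{3}F_{2}$ factor in $x_{1}$, so it suffices to prove the three-term relation for that factor alone.

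First I would recognise this ${}_{3}F_{2}$ as an instance of ${}_{1}S$. Comparing its parameters with \eqref{1boyut} gives
\[
\hyper{3}{2}{-n_{1},\ n_{1}+2|\mathbf{n}^{2}|+|\mathbf{a}|+|\mathbf{b}|-1,\ |\mathbf{n}^{2}|+|\mathbf{a}^{2}|+\frac{x_{1}}{2}}{2|\mathbf{n}^{2}|+|\mathbf{a}^{2}|+|\mathbf{b}^{2}|,\ |\mathbf{n}^{2}|+|\mathbf{a}|}{1}={}_{1}S_{n_{1}}\!\left(x_{1};a_{1},\,|\mathbf{n}^{2}|+|\mathbf{a}^{2}|,\,b_{1},\,|\mathbf{n}^{2}|+|\mathbf{b}^{2}|\right),
\]
because matching the argument $A_{2}+\frac{x_{1}}{2}$, the lower parameters $A_{2}+B_{2}$ and $A_{1}+A_{2}$, and the upper parameter $n_{1}+A_{1}+A_{2}+B_{1}+B_{2}-1$ forces $A_{1}=a_{1}$, $B_{1}=b_{1}$, $A_{2}=|\mathbf{n}^{2}|+|\mathbf{a}^{2}|$ and $B_{2}=|\mathbf{n}^{2}|+|\mathbf{b}^{2}|$ (here $A_{1},A_{2},B_{1},B_{2}$ denote the four parameters of ${}_{1}S$). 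Under $a_{1}\to a_{1}+1$, $b_{1}\to b_{1}-1$ the arguments $A_{2},B_{2}$ stay fixed while $A_{1}\to A_{1}+1$ and $B_{1}\to B_{1}-1$, which is exactly the shift pattern of \eqref{*4}.

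I would then apply \eqref{*4} with $a_{2}\to|\mathbf{n}^{2}|+|\mathbf{a}^{2}|$ and $b_{2}\to|\mathbf{n}^{2}|+|\mathbf{b}^{2}|$ (and $a_{1},b_{1},n_{1},x_{1}$ unchanged), multiply the resulting identity by the shift-invariant factor $\left(|\mathbf{a}^{2}|+\frac{x_{1}}{2}\right)_{|\mathbf{n}^{2}|}\,{}_{r-1}S_{n_{2},\dots,n_{r}}\left(x_{2},\dots,x_{r};\mathbf{a}^{2},\mathbf{b}^{2}\right)$, and read the three resulting terms back through \eqref{P2}. This produces precisely the claimed recurrence. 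Equivalently, one may argue by induction on $r$, using \eqref{P1} to peel off the last variable and invoking the $r=2$ case \eqref{104}; both routes bottom out at \eqref{*4}.

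The only genuine work is the coefficient bookkeeping in the final step: one inserts $a_{2}\to|\mathbf{n}^{2}|+|\mathbf{a}^{2}|$ and $b_{2}\to|\mathbf{n}^{2}|+|\mathbf{b}^{2}|$ into $B_{1}+C_{1}$, $B_{2}+C_{2}$ and $C_{3}$ of \eqref{*4} and simplifies with $a_{1}+|\mathbf{a}^{2}|=|\mathbf{a}|$, $b_{1}+|\mathbf{b}^{2}|=|\mathbf{b}|$ and $n_{1}+|\mathbf{n}^{2}|=|\mathbf{n}|$. Then $a_{1}+a_{2}-1$ becomes $|\mathbf{n}^{2}|+|\mathbf{a}|-1$, the factor $n_{1}+a_{1}+a_{2}+1$ becomes $|\mathbf{n}|+|\mathbf{a}|+1$, and $n_{1}+b_{1}+b_{2}-2$ becomes $|\mathbf{n}|+|\mathbf{b}|-2$, reproducing $B_{1}+C_{1}$ and $C_{3}$, while the same substitution in $B_{2}+C_{2}$ yields the stated expression. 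No convergence subtlety intervenes, as every ${}_{3}F_{2}$ here terminates ($-n_{1}$ being a numerator parameter), so the unit-argument specialisation underlying \eqref{*4} is legitimate.
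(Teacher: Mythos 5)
Your proof is correct, and it arrives at the same place as the paper while being organized somewhat differently. The paper disposes of this theorem in a single remark: apply the $r=2$ relation \eqref{104} consecutively through the factorization \eqref{P1}, i.e.\ induct on $r$ by peeling off the last variable until the two-variable case is reached (which was itself obtained from \eqref{*4}). Your primary route instead uses \eqref{P2} to peel off the \emph{first} variable, so that the entire $a_{1},b_{1}$-dependence sits in one terminating ${}_{3}F_{2}$ factor, which you correctly identify as ${}_{1}S_{n_{1}}\left(x_{1};a_{1},\left\vert \mathbf{n}^{2}\right\vert +\left\vert \mathbf{a}^{2}\right\vert ,b_{1},\left\vert \mathbf{n}^{2}\right\vert +\left\vert \mathbf{b}^{2}\right\vert \right)$; a single application of \eqref{*4} then gives the result in one step, with no induction. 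I checked the parameter matching and the substitutions $a_{2}\rightarrow \left\vert \mathbf{n}^{2}\right\vert +\left\vert \mathbf{a}^{2}\right\vert$, $b_{2}\rightarrow \left\vert \mathbf{n}^{2}\right\vert +\left\vert \mathbf{b}^{2}\right\vert$ in all three coefficients of \eqref{*4}: they reproduce exactly the stated $B_{1}+C_{1}$, $B_{2}+C_{2}$ and $C_{3}$, with the $r=1$ case degenerating to \eqref{*4} itself. What your route buys is the elimination of the induction and of the intermediate two-variable theorem; what the paper's route buys is uniformity with how the neighbouring theorems are stated (each general-$r$ recurrence is presented as the consecutive application of its $r=2$ counterpart). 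Since you also note the paper's route as an equivalent alternative, nothing is missing.
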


\begin{theorem}
For the family of the special function$\ _{r}S_{\mathbf{n}}\left(
\mathbf{x};\mathbf{a},\mathbf{b}\right) $, we have
\begin{multline*}
\left( B_{1}+C_{1}\right) \ _{r}S_{\mathbf{n}}\left( \mathbf{x};\mathbf{a},%
\mathbf{b}\right) +\left( B_{2}+C_{2}\right) \ _{r}S_{\mathbf{n}}\left(
\mathbf{x};\mathbf{a},b_{1},\dots ,b_{r-1},b_{r}-1,b_{r+1}+1\right)  \\
+C_{3}\ _{r}S_{\mathbf{n}}\left( \mathbf{x};\mathbf{a}%
,b_{1},\dots ,b_{r-1},b_{r}-2,b_{r+1}+2\right) =0,
\end{multline*}%
for  $r\geq 1$ where
\begin{align*}
B_{1}+C_{1} &=-\frac{b_{r}-1+\frac{x_{r}}{2}}{a_{r+1}+b_{r+1}-1}, \\
B_{2}+C_{2} &=\frac{n_{r}\left( n_{r}+\left \vert \mathbf{a}^{r}\right \vert
+\left \vert \mathbf{b}^{r}\right \vert -1\right) -\left( a_{r+1}+\frac{x_{r}}{%
2}\right) \left( \left \vert \mathbf{a}^{r}\right \vert +\left \vert \mathbf{b}%
^{r}\right \vert -1\right) }{\left( a_{r+1}+b_{r+1}-1\right) _{2}} \\
&+\frac{\left( 2a_{r+1}+2b_{r+1}+1\right) \left( \left \vert \mathbf{a}%
^{r}\right \vert +b_{r}-2+\frac{x_{r}}{2}\right) }{\left(
a_{r+1}+b_{r+1}-1\right) _{2}}-\frac{\left \vert \mathbf{a}^{r}\right \vert -1%
}{a_{r+1}+b_{r+1}-1}, \\
C_{3} &=-\frac{\left( n_{r}+a_{r}+b_{r}-2\right) \left(
n_{r}+a_{r+1}+b_{r+1}+1\right) \left( b_{r+1}-\frac{x_{r}}{2}+1\right) }{%
\left( a_{r+1}+b_{r+1}-1\right) _{3}}.
\end{align*}
\end{theorem}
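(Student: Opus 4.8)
The plan is to descend from the $r$-variable function to the two-variable function in its last two slots by iterating the factorization \eqref{P2}, and then to invoke the already established two-variable recurrence \eqref{106}; this is the same mechanism used for the sibling theorems that lift \eqref{101}, \eqref{102}, \eqref{103} and \eqref{104}. What makes the scheme run cleanly here is that the prescribed shifts, $b_{r}\mapsto b_{r}-1,b_{r}-2$ together with $b_{r+1}\mapsto b_{r+1}+1,b_{r+1}+2$, keep the sum $b_{r}+b_{r+1}$ fixed, hence leave every partial sum $\left\vert\mathbf{b}^{j}\right\vert$ with $j\le r$ unchanged. Moreover, the target coefficients $B_{1}+C_{1}$, $B_{2}+C_{2}$, $C_{3}$ depend on the data only through $\left\vert\mathbf{a}^{r}\right\vert$, $\left\vert\mathbf{b}^{r}\right\vert$, $a_{r+1}$, $b_{r+1}$, $n_{r}$ and $x_{r}$, i.e.\ purely on the last slot.

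First I would apply \eqref{P2} a total of $r-2$ times. Each application strips the current first variable and lowers the order by one, so after $r-2$ steps one reaches
\[
{}_{r}S_{\mathbf{n}}\left(\mathbf{x};\mathbf{a},\mathbf{b}\right)=\Phi\,{}_{2}S_{n_{r-1},n_{r}}\left(x_{r-1},x_{r};a_{r-1},a_{r},a_{r+1},b_{r-1},b_{r},b_{r+1}\right),
\]
where $\Phi$ collects the Pochhammer symbols $\left(\left\vert\mathbf{a}^{j+1}\right\vert+\frac{x_{j}}{2}\right)_{\left\vert\mathbf{n}^{j+1}\right\vert}$ and the hypergeometric prefactors of \eqref{P2} for $j=1,\dots,r-2$. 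Since $\Phi$ involves $\mathbf{b}$ only through $\left\vert\mathbf{b}^{2}\right\vert,\dots,\left\vert\mathbf{b}^{r-1}\right\vert$, each of which contains both $b_{r}$ and $b_{r+1}$, the factor $\Phi$ is invariant under each of the two admissible shifts.

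Next I would feed $ {}_{2}S_{n_{r-1},n_{r}}$ into \eqref{106}, read with $a_{1}\mapsto a_{r-1}$, $a_{2}\mapsto a_{r}$, $a_{3}\mapsto a_{r+1}$, $b_{2}\mapsto b_{r}$, $b_{3}\mapsto b_{r+1}$, $n_{2}\mapsto n_{r}$ and $x_{2}\mapsto x_{r}$. Under this identification the blocks $a_{2}+a_{3}$, $a_{3}+b_{3}$ and $a_{2}+a_{3}+b_{2}+b_{3}-1$ that build the coefficients of \eqref{106} become $\left\vert\mathbf{a}^{r}\right\vert$, $a_{r+1}+b_{r+1}$ and $\left\vert\mathbf{a}^{r}\right\vert+\left\vert\mathbf{b}^{r}\right\vert-1$, which are exactly the blocks appearing in the stated $B_{1}+C_{1}$, $B_{2}+C_{2}$ and $C_{3}$. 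Because the coefficients depend only on the last slot and $\Phi$ is common to all three terms, multiplying the three-term relation for $ {}_{2}S_{n_{r-1},n_{r}}$ through by $\Phi$ and running \eqref{P2} backwards term by term converts every $\Phi\,{}_{2}S$ into the corresponding $ {}_{r}S$ with the shifted $b_{r},b_{r+1}$, which yields the asserted identity. Equivalently one may phrase this as an induction on $r$ with base $r=2$ (the statement is then exactly \eqref{106}), one application of \eqref{P2} reducing the claim for $r$ to the claim for $r-1$; for $r=1$ it is the one-variable relation \eqref{*6}.

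The hard part will be purely clerical: one must confirm that, after the reindexing above, the numerators of $B_{2}+C_{2}$ and $C_{3}$ produced by \eqref{106} agree termwise with the displayed expressions, in particular the quadratic contribution $n_{r}\left(n_{r}+\left\vert\mathbf{a}^{r}\right\vert+\left\vert\mathbf{b}^{r}\right\vert-1\right)$ and the denominators $\left(a_{r+1}+b_{r+1}-1\right)_{2}$ and $\left(a_{r+1}+b_{r+1}-1\right)_{3}$. All of the genuine content, however, is contained in the invariance of $\Phi$: because the move keeps $b_{r}+b_{r+1}$ fixed, the reduction factor is shared across the three terms, and since the coefficients are functions of the inert last-slot parameters alone, the recurrence \eqref{106} propagates from the last-variable block to the full function without any alteration of its coefficients.
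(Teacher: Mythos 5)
Your proposal is correct and follows essentially the same route as the paper, which disposes of this theorem by remarking that the two-variable relation \eqref{106} is to be applied consecutively through the factorization \eqref{P2}; your observation that the prefactors are invariant because the shifts preserve $b_{r}+b_{r+1}$ (hence every $\left\vert\mathbf{b}^{j}\right\vert$ with $j\le r$) is exactly the point the paper leaves implicit. The induction-on-$r$ phrasing with base case \eqref{106} (or \eqref{*6} for $r=1$) is the cleanest way to write it, and your parameter identifications reproduce the stated coefficients.
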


\begin{theorem}
The family of the special function$\ _{r}S_{\mathbf{n}}\left( \mathbf{x};%
\mathbf{a},\mathbf{b}\right) $ satisfies the following recurrence relation%
\begin{multline*}
\left( \left \vert \mathbf{n}^{2}\right \vert +\left \vert \mathbf{a}%
\right \vert \right) \ _{r}S_{\mathbf{n}}\left( \mathbf{x};\mathbf{a},\mathbf{%
b}\right) \\
-\left( n_{1}+2\left \vert \mathbf{n}^{2}\right \vert +\left \vert
\mathbf{a}\right \vert +\left \vert \mathbf{b}\right \vert -1\right) \ _{r}S_{%
\mathbf{n}}\left( \mathbf{x};a_{1}+1,a_{2},\dots ,a_{r+1},b_{1},\dots ,b_{r+1}%
\right)  \\
+\left( \left \vert \mathbf{n}\right \vert +\left \vert \mathbf{b}\right \vert
-1\right) \ _{r}S_{\mathbf{n}}\left( \mathbf{x}%
;a_{1}+1,a_{2},\dots ,a_{r+1},b_{1}-1,b_{2},\dots ,b_{r+1}\right) =0
\end{multline*}%
for $r\geq 1$.
\end{theorem}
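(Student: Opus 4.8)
The plan is to peel off the first variable by means of the product relation \eqref{P2}, which gathers the entire dependence on the first parameters $a_{1},b_{1}$ into a single univariate factor, and then to invoke the scalar three-term relation \eqref{*7}. This is the direct generalization of the $r=2$ argument leading to \eqref{107}.

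Concretely, I would first use \eqref{P2} to write
\[
{}_{r}S_{\mathbf{n}}\left( \mathbf{x};\mathbf{a},\mathbf{b}\right)=\Phi\cdot{}_{1}S_{n_{1}}\left( x_{1};a_{1},A,b_{1},B\right),\qquad A=\left \vert \mathbf{n}^{2}\right \vert +\left \vert \mathbf{a}^{2}\right \vert,\quad B=\left \vert \mathbf{n}^{2}\right \vert +\left \vert \mathbf{b}^{2}\right \vert,
\]
with
\[
\Phi=\left( \left \vert \mathbf{a}^{2}\right \vert +\frac{x_{1}}{2}\right)_{\left \vert \mathbf{n}^{2}\right \vert}{}_{r-1}S_{n_{2},\dots ,n_{r}}\left( x_{2},\dots ,x_{r};\mathbf{a}^{2},\mathbf{b}^{2}\right).
\]
The identification of the ${}_{3}F_{2}$ in \eqref{P2} with ${}_{1}S_{n_{1}}$ at the compound parameters $A,B$ is immediate on comparing the $j=1$ factor of \eqref{Func1} with the definition \eqref{1boyut}; the ${}_{r-1}S$ appearing in $\Phi$ is inert here, a mere multiplier to which no recurrence is applied. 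The decisive structural observation is that $\Phi$, $A$ and $B$ are built only from $a_{2},\dots ,a_{r+1}$ and $b_{2},\dots ,b_{r+1}$, so they are left unchanged by the shifts $a_{1}\mapsto a_{1}+1$ and $b_{1}\mapsto b_{1}-1$.

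Next I would apply relation \eqref{*7} to ${}_{1}S_{n_{1}}\left( x_{1};a_{1},a_{2},b_{1},b_{2}\right)$ after the substitution $a_{2}\to A$, $b_{2}\to B$, obtaining a three-term identity in ${}_{1}S_{n_{1}}(x_{1};a_{1},A,b_{1},B)$, ${}_{1}S_{n_{1}}(x_{1};a_{1}+1,A,b_{1}-1,B)$ and ${}_{1}S_{n_{1}}(x_{1};a_{1}+1,A,b_{1},B)$. Multiplying this identity by $\Phi$ and re-reading each product through \eqref{P2}—permissible exactly because $\Phi,A,B$ are untouched by the shifts—turns the three scalar terms into ${}_{r}S_{\mathbf{n}}(\mathbf{x};\mathbf{a},\mathbf{b})$, into ${}_{r}S_{\mathbf{n}}$ with $(a_{1},b_{1})\to(a_{1}+1,b_{1}-1)$, and into ${}_{r}S_{\mathbf{n}}$ with $(a_{1},b_{1})\to(a_{1}+1,b_{1})$, which are precisely the three functions in the statement.

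It then remains to collapse the coefficients supplied by \eqref{*7}. With $a_{1}+\left \vert \mathbf{a}^{2}\right \vert =\left \vert \mathbf{a}\right \vert$, $b_{1}+\left \vert \mathbf{b}^{2}\right \vert =\left \vert \mathbf{b}\right \vert$ and $n_{1}+\left \vert \mathbf{n}^{2}\right \vert =\left \vert \mathbf{n}\right \vert$, the scalar coefficients $a_{1}+A$, $n_{1}+b_{1}+B-1$ and $-(n_{1}+a_{1}+A+b_{1}+B-1)$ become $\left \vert \mathbf{n}^{2}\right \vert +\left \vert \mathbf{a}\right \vert$, $\left \vert \mathbf{n}\right \vert +\left \vert \mathbf{b}\right \vert -1$ and $-(n_{1}+2\left \vert \mathbf{n}^{2}\right \vert +\left \vert \mathbf{a}\right \vert +\left \vert \mathbf{b}\right \vert -1)$, exactly the coefficients claimed. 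The step needing the most care—the main obstacle—is the bookkeeping guaranteeing that the shifts act only on the univariate factor: one must check that the argument $a_{2}+\frac{x_{1}}{2}$ in \eqref{*7} becomes, after $a_{2}\to A$, the quantity $\left \vert \mathbf{n}^{2}\right \vert +\left \vert \mathbf{a}^{2}\right \vert +\frac{x_{1}}{2}$ appearing in the $j=1$ factor of \eqref{Func1}, and that one and the same $\Phi$ serves all three terms, so that \eqref{P2} may be used in both directions without alteration. Granting this, the relation holds for every $r\geq2$ by this single reduction, the case $r=1$ being \eqref{*7} itself. An equivalent derivation, in the inductive style used for the \eqref{P1}-based recurrences above, peels off the last variable through \eqref{P1} with $a_{r}\to a_{r}+a_{r+1}+n_{r}$ and $b_{r}\to b_{r}+b_{r+1}+n_{r}$ at each stage; iterating reproduces the same factor ${}_{1}S_{n_{1}}(x_{1};a_{1},A,b_{1},B)$ and hence the identical relation.
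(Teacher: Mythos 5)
Your proposal is correct and follows essentially the same route as the paper: the paper's (one-line) proof likewise rests on the factorization \eqref{P2}, which isolates the entire $(a_{1},b_{1})$-dependence in a single ${}_{3}F_{2}$ factor identifiable with ${}_{1}S_{n_{1}}\left(x_{1};a_{1},\left\vert \mathbf{n}^{2}\right\vert+\left\vert \mathbf{a}^{2}\right\vert,b_{1},\left\vert \mathbf{n}^{2}\right\vert+\left\vert \mathbf{b}^{2}\right\vert\right)$, and on the contiguous relation \eqref{H3} underlying \eqref{*7}. The only cosmetic difference is that you reduce in one step to the univariate relation \eqref{*7}, whereas the paper phrases the reduction as a consecutive application of the bivariate relation \eqref{107} together with \eqref{P2}; the coefficient bookkeeping you carry out matches the stated identity exactly.
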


\begin{proof}
By using the relation \eqref{107} consecutively by taking into account \eqref{P2}, the relation above is found.
\end{proof}

\section*{Acknowledgements}

The work of I.A. has been partially supported by the Agencia Estatal de Investigaci\'on (AEI) of Spain under Grant MTM2016-75140-P, cofinanced by the European Community fund FEDER.


\end{document}